\newtheorem{theorem}{Theorem}[section]
\theoremstyle{plain}
\newtheorem{corollary}[theorem]{Corollary}
\newtheorem{definition}[theorem]{Definition}
\newtheorem{lemma}[theorem]{Lemma}
\newtheorem{proposition}[theorem]{Proposition}
\theoremstyle{remark}
\newtheorem{remark}[theorem]{Remark}
\numberwithin{equation}{section}
\DeclareMathOperator{\Tr}{Tr}
\DeclareMathOperator{\Det}{Det}
\DeclareMathOperator{\dist}{dist}
\let\emptyset\varnothing
\begin{document}
\title[Solitons for the Focusing DS II Equation]{Soliton Solutions and their (In)stability for the Focusing Davey-Stewartson II Equation}
\dedicatory{With an Appendix by Russell Brown}
\author{Peter A. Perry}
\address[Perry]{Department of Mathematics, University of Kentucky, Lexington, Kentucky 40506--0027}
\address[Brown]{Department of Mathematics, University of Kentucky, Lexington, Kentucky 40506--0027}
\thanks{Perry supported in part by NSF\ grant  DMS-1208778.}
\thanks{Version of \today}

\begin{abstract}
We give a rigorous mathematical analysis of  the one-soliton solution of the focusing Davey-Stewartson II equation and a proof of its instability under perturbation. Building on the fundamental perturbation analysis of Gadyl'shin and Kiselev, we verify their Assumption 1 and use Fredholm determinants to globalize their perturbation analysis. 
\end{abstract}

\maketitle
\tableofcontents

%
%
%
%


\newcommand{\bbC}{\mathbb{C}}
\newcommand{\bbR}{\mathbb{R}}


\newcommand{\calA}{\mathcal{A}}
\newcommand{\calB}{\mathcal{B}}
\newcommand{\calC}{\mathcal{C}}
\newcommand{\calD}{\mathcal{D}}
\newcommand{\calE}{\mathcal{E}}
\newcommand{\calF}{\mathcal{F}}
\newcommand{\calG}{\mathcal{G}}
\newcommand{\calH}{\mathcal{H}}
\newcommand{\calI}{\mathcal{I}}
\newcommand{\calJ}{\mathcal{J}}
\newcommand{\calK}{\mathcal{K}}
\newcommand{\calL}{\mathcal{L}}
\newcommand{\calM}{\mathcal{M}}
\newcommand{\calN}{\mathcal{N}}
\newcommand{\calO}{\mathcal{O}}
\newcommand{\calP}{\mathcal{P}}
\newcommand{\calQ}{\mathcal{Q}}
\newcommand{\calR}{\mathcal{R}}
\newcommand{\calS}{\mathcal{S}}
\newcommand{\calT}{\mathcal{T}}
\newcommand{\calU}{\mathcal{U}}
\newcommand{\calV}{\mathcal{V}}
\newcommand{\calW}{\mathcal{W}}
\newcommand{\calX}{\mathcal{X}}
\newcommand{\calY}{\mathcal{Y}}
\newcommand{\calZ}{\mathcal{Z}}


\newcommand{\bfe}{\mathbf{e}}
\newcommand{\bfm}{\mathbf{m}}
\newcommand{\bft}{\mathbf{t}}


\newcommand{\bfA}{\mathbf{A}}
\newcommand{\bfB}{\mathbf{B}}
\newcommand{\bfC}{\mathbf{C}}
\newcommand{\bfD}{\mathbf{D}}
\newcommand{\bfE}{\mathbf{E}}
\newcommand{\bfF}{\mathbf{F}}
\newcommand{\bfG}{\mathbf{G}}
\newcommand{\bfH}{\mathbf{H}}
\newcommand{\bfI}{\mathbf{I}}
\newcommand{\bfJ}{\mathbf{J}}
\newcommand{\bfK}{\mathbf{K}}
\newcommand{\bfL}{\mathbf{L}}
\newcommand{\bfM}{\mathbf{M}}
\newcommand{\bfN}{\mathbf{N}}
\newcommand{\bfO}{\mathbf{O}}
\newcommand{\bfP}{\mathbf{P}}
\newcommand{\bfQ}{\mathbf{Q}}
\newcommand{\bfR}{\mathbf{R}}
\newcommand{\bfS}{\mathbf{S}}
\newcommand{\bfT}{\mathbf{T}}
\newcommand{\bfU}{\mathbf{U}}
\newcommand{\bfV}{\mathbf{V}}
\newcommand{\bfW}{\mathbf{W}}
\newcommand{\bfX}{\mathbf{X}}
\newcommand{\bfY}{\mathbf{Y}}
\newcommand{\bfZ}{\mathbf{Z}}


\newcommand{\abar}{\overline{a}}
\newcommand{\bbar}{\overline{b}}
\newcommand{\fbar}{{\overline{f}}}
\newcommand{\gbar}{{\overline{g}}}
\newcommand{\kbar}{{\overline{k}}}
\newcommand{\rbar}{{\overline{r}}}
\newcommand{\ubar}{{\overline{u}}}
\newcommand{\qbar}{{\overline{q}}}
\newcommand{\xbar}{\overline{x}}
\newcommand{\wbar}{\overline{w}}
\newcommand{\zbar}{{\overline{z}}}

\newcommand{\Pbar}{\overline{P}}


\newcommand{\mubar}{{\overline{\mu}}}
\newcommand{\xibar}{\overline{\xi}}
\newcommand{\nubar}{\overline{nu}}
\newcommand{\psibar}{\overline{\psi}}


\newcommand{\dee}{\partial}
\newcommand{\dbar}{\overline{\partial}}

\newcommand{\dint}{\displaystyle{\int}}
\newcommand{\dsum}{\displaystyle{\sum}}

\newcommand{\darr}{\downarrow}
\newcommand{\uarr}{\uparrow}
\newcommand{\rarr}{\rightarrow}

\newcommand{\eps}{\varepsilon}


\newcommand{\dotarg}{\, \cdot \, }
\newcommand{\diamondarg}{\, \diamond \, }


\newcommand{\Yeps}{{Y_\eps}}


\newcommand{\bigO}[1]{{\mathcal{O}}\left( {#1} \right)}
\newcommand{\norm}[1]{\left\Vert {#1} \right\Vert}
\newcommand{\absval}[1]{\left\vert {#1} \right\vert}


\newcommand{\twovec}[2]{\left( \begin{array}{c} {#1} \\ {#2} \end{array}\right)}
\newcommand{\Twovec}[2]{\left( \begin{array}{c} {#1}\\  \\ {#2} \end{array}\right)}


\newcommand{\twomat}[4]{\left( \begin{array}{rr} {#1} & {#2} \\ {#3} & {#4} \end{array} \right) }
\newcommand{\bigtwomat}[4]{\left( \begin{array}{cc} {#1} & {#2} \\ \\ {#3} & {#4} \end{array} \right) }
\newcommand{\diagmat}[2]{\left( \begin{array}{rr} {#1} & {0} \\ {0} & {#2} \end{array} \right) }


\newcommand{\sigmathree}{\twomat{1}{0}{0}{-1}}

\renewcommand{\norm}[2]{\| {#1} \|_{#2}}

\newcommand{\phibar}{\overline{\varphi}}
\newcommand{\alphabar}{\overline{\alpha}}
\newcommand{\betabar}{\overline{\beta}}
\newcommand{\chibar}{\overline{\chi}}
\newcommand{\kappabar}{\overline{\kappa}}
\newcommand{\zetabar}{\overline{\zeta}}

\newcommand{\mudot}{\dot{\mu}}

\newcommand{\Cauchy}{\calC}
\newcommand{\CauchyBar}{\overline{\calC}}

\newcommand{\Bpert}{\left( \delta B^* \right)}

\newcommand{\angles}[2]{ \left\langle {#1} , {#2} \right\rangle}

\newcommand{\compose}{\, \circ \, }

\newcommand{\loc}{\mathrm{loc}}

\newcommand{\lam}{\lambda}

\newcommand{\Cauchybar}{\overline{\Cauchy}}

\newcommand{\Twomat}[4]
{
	\left(
		\begin{array}{ccc}
		#1	&&	#2	\\[5pt]
		#3	&&	#4
		\end{array}
	\right)
}

\newcommand{\offdiagmat}[2]{\twomat{0}{#1}{#2}{0}}

\section{Introduction}

In this paper we will give a rigorous proof that the one-soliton solution for the focusing Davey-Stewartson II (fDSII) equation in two-dimensions is unstable under smooth, compactly supported perturbations of the initial data. Our proof uses  the inverse scattering method and sharp asymptotic analysis for a renormalized Fredholm determinant whose zeros signal the presence of soliton solutions. As we will explain, our approach builds on previous work of Gadyl'shin and Kiselev \cite{GK:1999}.

We will study the fDSII equation in the form
\begin{align}
\label{fDSII}
iu_t 		&=	\left( \dee^2 + \dbar^2 \right) u + \left(g + \gbar\right) u\\
\nonumber
\dbar g 	&=	-\frac{1}{2} \dee \left( |u|^2 \right)
\end{align}
Here $u=u(x,y,t)$, $\dee=(1/2)\left( \dee_x - i \dee_y \right)$, and $\dbar=(1/2)\left(\dee_x+i\dee_y\right)$. 
The fDSII equation is the shallow-water limit of the two-dimensional, dispersive nonlinear PDE derived by Benny-Roskes \cite{BR:1969} and Davey-Stewartson \cite{DS:1974} to describe the propagation of weakly nonlinear, monochromatic surface waves. The solution $u(x,y,t)$ gives the amplitude envelope of the wave. We refer the reader to the paper of Ghidaglia and Saut \cite{GS:1989} for a physical derivation of the Davey-Stewartson equation and its limiting case, the fDSII equation.

The fDSII equation is completely integrable in the following sense.  Let $u \in C^1(\bbC \times \bbR)$  
be given and let  $g \in C^1(\bbC \times \bbR)$ with $\dbar g = -(1/2) \dee (|u|^2)$. Let
$$ J = \twomat{1}{0}{0}{-1}, \quad Q = \offdiagmat{u}{-\ubar}, \quad B =  \twomat{ig}{-i\dee g}{-i\dbar \ubar}{i\gbar}$$
A function $u \in C^1(\bbC \times \bbR)$ is 
a solution of the fDSII equation if and only if the relation
$$ \dot{L} = [A,L] $$
holds as operators, where
\begin{align*}
L 	&=	-\dee_x - iJ\dee_y + Q\\
A	&=	B - Q \dee_y +iJ\dee_y^2
\end{align*}

The scattering data used to linearize the fDSII flow are determined by $2 \times 2$ matrix-valued solutions
$\Psi(z,k)$ of the problem 
\begin{subequations}
\label{CGO}
\begin{align}
\label{CGO.pde}
L\Psi	&=	0,\\
\label{CGO.bc}
 \lim_{|z| \rarr \infty} \Psi(z,k) \diagmat{e^{-ikz}}{e^{i\kbar \zbar}} &= \diagmat{1}{1} 
\end{align}
\end{subequations}
where we now take for $u$ a generic function $u \in L^2(\bbC)$. 
Setting 
$$ M(z,k) = \Psi(z,k) \diagmat{e^{-ikz}}{e^{i\kbar \zbar}}, $$ 
we obtain the spectral problem
\begin{subequations}
\label{CGO.m}
\begin{align}
\label{CGO.m.pde}
\Twomat{\dbar M_{11}}{\left(\dbar - i\kbar\right)M_{12}}{\left(\dee + ik\right)M_{21}}{\dee M_{22}}
&=
\frac{1}{2} Q M,\\
\label{CGO.m.bc}
\lim_{|z| \rarr \infty} M(z,k) &= \diagmat{1}{1}.
\end{align}
\end{subequations}

Fix $p > 2$. We say that $k \in \bbC$ is a \emph{regular point} for the problem \eqref{CGO.m} if there exists 
a unique matrix-valued solution $M(\dotarg,k)$ with 
$$M(\dotarg,k) -\twomat{1}{0}{0}{1} \in L^p(\bbC),$$ and an 
\emph{exceptional point} if there is a solution $M(\dotarg,k)$ of \eqref{CGO.m} with $M(z,k) \in L^p(\bbC)$.
We denote by $Z$ the set of all exceptional points, called the \emph{exceptional set}. We will show that the 
exceptional set is closed and bounded.

We will say that an initial datum $u_0$ for \eqref{fDSII} \emph{supports solitons} if $Z$ is a nonempty, discrete set. Under ``small data'' conditions it can be shown that $Z = \emptyset$. On the other hand, Arkadiev, Progrebov, and Polivanov \cite{APP:1989} derived an explicit family of initial data 
\begin{equation}
\label{APP}
u_0(z)  = \frac{2 \overline{\nu_0} e_{k_0}(z)}{|z+ \mu_0|^2 + |\nu_0|^2},
\end{equation}
where $k_0$, $\nu_0$, $\mu_0$ are complex parameters, which have an exceptional point at $k_0$,  and give rise to soliton solutions of \eqref{fDSII} with algebraic decay. Here and in what follows we set
\begin{equation}
\label{ek}
e_k(z)= \exp\left( i(kz+\kbar \zbar) \right). 
\end{equation}
Gadyl'shin and Kiselev showed that these solutions are unstable in the sense that, for a set of 
$C_0^\infty(\bbC)$ perturbations with finite codimension, the exceptional set $Z$ for $u_0 +\eps \varphi$ does not contain $k_0$. Their proof is perturbative in nature and relies on an unproven assumption about the spectrum of a Fredholm operator associated to the problem \eqref{CGO.m}. Here we give a global analysis and, along the way, 
prove the spectral assumption made by Gadyl'shin and Kiselev. The formulation of (ii) below is due to Gadyl'shin and Kiselev but we globalize their result through the use of a Fredholm determinant associated to the direct scattering problem. We will prove:

\begin{theorem}
\label{thm:main}
(i) The potential $u_0(z)$ has $Z = \{ k_0 \}$. (ii) Choose $\varphi \in C_0^\infty(\bbC)$ so that 
$$ \int_{\bbC} \left(\chi - |z|^2 \chibar\right) \left(1+|z|^2\right)^{-2} \, dm(z), 
$$
is nonzero, where $\chi  = e_{-k_0} \varphi$. Then, for all sufficiently small $\eps>0$, the potential $u_\eps = u_0 + \eps \varphi$ has empty exceptional set.
\end{theorem}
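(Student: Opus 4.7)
The plan has three components: (i) an explicit analysis of the unperturbed APP soliton; (ii) construction of a renormalized Fredholm determinant whose zeros track the exceptional set; and (iii) a local perturbation analysis at $k_0$.

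For part (i), I would exploit the gauge transformation $u \mapsto e_{-k_0} u$, under which the exceptional set is translated by $-k_0$, together with translation invariance in $z$, to reduce to the canonical case $k_0 = 0$, $\mu_0 = 0$. For the reduced potential $u_0(z) = 2\overline{\nu_0}/(|z|^2 + |\nu_0|^2)$, the APP construction already yields an explicit $L^p$-solution of the homogeneous CGO system \eqref{CGO.m} at $k=0$, so $0 \in Z$. For $k \neq 0$, the task is to show that \eqref{CGO.m} admits a unique $L^p$-normalized solution and no nontrivial $L^p$ solution of the homogeneous equation. This follows from a direct analysis of the integral operator $K_k$ associated to \eqref{CGO.m}: explicitly identify the kernel of $I - K_0$ using the soliton's algebraic structure, and use compactness of $K_k - K_0$ together with large-$|k|$ control (where $K_k$ is a small perturbation) to rule out exceptional points in a punctured neighborhood of $0$ and at infinity.

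For part (ii), introduce a renormalized Fredholm determinant $\Delta(k;u)$ associated to \eqref{CGO.m} whose vanishing locus in $k$ (for fixed $u$) coincides with $Z(u)$. The map $u \mapsto \Delta(\dotarg;u)$ depends smoothly on $u$ with values in, e.g., $C_{\loc}(\bbC)$, which is what permits a global rather than merely local (in $\eps$) perturbation analysis. Writing $\Delta_\eps(k) = \Delta(k;u_0+\eps\varphi)$, one obtains the smooth expansion
\begin{equation*}
\Delta_\eps(k) = \Delta_0(k) + \eps\, \Delta_1(k) + O(\eps^2),
\end{equation*}
uniformly on compact subsets of $\bbC$. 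Combined with (i), $|\Delta_0(k)|$ is uniformly bounded below on $\bbC \setminus B(k_0,r)$ for each $r>0$, so continuity in $\eps$ immediately excludes exceptional points away from $k_0$; the entire question is local at $k_0$.

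The heart of the proof is the local analysis of $\Delta_\eps$ at $(k_0,0)$. A Lyapunov--Schmidt reduction based on the one-dimensional kernel of $I - K_{k_0}$ (spanned by the explicit APP solution $m_0$) and its cokernel (spanned by a computable $m_0^*$) produces a scalar equation equivalent to $\Delta_\eps(k) = 0$ near $(k_0,0)$. The intrinsic $\dbar_k$-structure of \eqref{CGO.m} forces the leading $k$-dependence of $\Delta_0$ at $k_0$ to be \emph{non-transverse}, while the first-order contribution from $\eps\varphi$ is a pairing $\langle m_0^*, \varphi\, m_0\rangle$ that, after explicit evaluation of $m_0$ and $m_0^*$ from the APP formula and the substitution $\chi = e_{-k_0}\varphi$, is identified (up to a nonzero multiplicative constant) with the integral $\int_{\bbC} (\chi - |z|^2\chibar)(1+|z|^2)^{-2}\, dm(z)$ appearing in the hypothesis. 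Under the non-degeneracy assumption this lifts $\Delta_\eps$ off zero in a neighborhood of $k_0$ for all sufficiently small $\eps > 0$, yielding $Z(u_\eps) = \emptyset$. The principal obstacle is verifying the spectral structure of $K_{k_0}$ (simple kernel with one-dimensional cokernel, which is the Gadyl'shin--Kiselev Assumption 1) and establishing the precise non-transverse profile of $\Delta_0$ at $k_0$ that makes the disappearance mechanism—rather than a mere shift of the zero—possible; this is the main new analytic work required.
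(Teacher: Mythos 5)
There is a genuine gap, and it sits exactly at the point you flag as ``the main new analytic work required'': you assert that the kernel of $I-K_{k_0}$ is one-dimensional and describe the Gadyl'shin--Kiselev Assumption~1 as ``simple kernel with one-dimensional cokernel.'' That is wrong, and the error propagates through the rest of the plan. The paper proves (Theorem \ref{thm:1ss}, Proposition \ref{prop:two}) that the eigenvalue $\lambda=1$ of the scalar operator $S_{k_0,u_0}$ is semisimple of multiplicity \emph{two}, with kernel spanned by $\psi_1=\zbar\rho^{-2}$, $\psi_2=\rho^{-2}$. Consequently the Lyapunov--Schmidt reduction is a $2\times 2$ matrix reduction, not a scalar one, and the ``disappearance mechanism'' rests entirely on that $2\times 2$ structure. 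The paper's Theorem \ref{thm:split} shows
\begin{equation*}
\det\bigl(I-T(\kappa,\eps)\bigr)
= \bigl\lvert i\kappabar + \eps\beta \bigr\rvert^2 + \eps^2\lvert\alpha\rvert^2
+ o\bigl(\eps^2 + \eps\lvert\kappa\rvert + \lvert\kappa\rvert^2\bigr),
\end{equation*}
a sum of two nonnegative terms, which is therefore strictly positive whenever $\alpha\neq 0$. With a one-dimensional kernel the scalar bifurcation equation would generically read $c\,\kappa + \eps d + o(\eps+\lvert\kappa\rvert)=0$; a simple zero of that kind merely translates as $\eps$ turns on, it does not vanish. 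Your appeal to ``non-transversality forced by the $\dbar_k$-structure'' gestures at the right phenomenon, namely that $D(k,u_0)\sim c\lvert\kappa\rvert^2$ vanishes to order two, but that order-two vanishing is precisely a consequence of the two-dimensional kernel and is not something the scalar reduction produces on its own.

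There is a second, smaller gap in your argument for part (i). You propose to show $Z=\{k_0\}$ by combining a local analysis near $k_0$ with large-$\lvert k\rvert$ smallness of the integral operator, but that leaves the intermediate range of $\lvert k-k_0\rvert$ untouched, and compactness of $K_k-K_{k_0}$ says nothing about invertibility there. The paper closes this by a genuinely global argument: it derives a $\dbar_k$-equation for $\log D(k,u_0)$ (Theorem \ref{thm:dbar}) in terms of the explicitly computable scattering coefficient $s(k)=2i/(k-k_0)$ and the coefficient $c(k)$, observes that $D$ depends only on $\lvert k-k_0\rvert$ by radial symmetry, and integrates the resulting ODE for $H(t)=\Delta(\sqrt{t})$ from $\infty$ inward to conclude $H$ has no zeros on $(0,\infty)$ and vanishes like $ct$ at $t=0$. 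That argument both pins down $Z=\{k_0\}$ exactly and delivers the asymptotics $D(k,u_0)=c\lvert k-k_0\rvert^2(1+\bigO{\lvert k-k_0\rvert^2})$, which, fed into Lemma \ref{lemma:multiplicities} together with the self-adjoint conjugation $\rho^{-1}T(0)\rho=B^*B$, is what gives the two-dimensionality of the kernel. In short, the determinant/$\dbar$-equation is not just a bookkeeping device in the paper; it simultaneously proves part (i), determines the correct kernel dimension, and supplies the order-two vanishing your local analysis needs as input.
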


To prove Theorem \ref{thm:main}, we reduce the study of the exceptional set to a renormalized determinant of 
a Fredholm operator associated to problem \eqref{CGO.m} through a series of (standard) symmetry reductions. 
We make use of the generalized determinant defined by Gohberg, Goldberg, and Krein \cite{GGK:1997,GGK:2000}
to define the determinant and show that it solves a $\dbar$ problem determined by scattering data. Using this equation we are able to compute the determinant associated to the soliton solution to show that $Z = \{ k_0 \}$. We then study the behavior of the determinant under perturbations in order to show that 
$Z = \emptyset$ for perturbations obeying the hypotheses of Theorem \ref{thm:main}. 

The paper is organized as follows. In \S \ref{sec:prelim} we fix notation, recall useful estimates on the $\dbar$ problem, and summarize relevant results of perturbation theory. In \S \ref{sec:direct} we study the direct scattering problem \eqref{CGO.m}, define the scattering data, and define the Fredholm determinant. In \S \ref{sec:one-soliton},
we compute the determinant of the one-soliton solution and prove the spectral assumption of Gadyl'shin and Kiselev.
Finally, in \S \ref{sec:perturb}, we study perturbations of the determinant and prove Theorem \ref{thm:main}.

\subsection*{Acknowledgments}
It is a pleasure to thank Russell Brown, Ken McLaughlin, Peter Miller, Michael Music, Katharine Ott, and Brad Schwer for helpful discussions.

\section{Preliminaries}
\label{sec:prelim}

\subsection{Notation} If $X$ and $Y$ are Banach spaces with $X \cap Y$ dense in $X$ and $Y$, we norm $X \cap Y$ with the norm $\norm{f}{X \cap Y} = \norm{f}{X}+ \norm{f}{Y}$. We denote by $\calB(X,Y)$ the Banach space of bounded linear operators from $X$ to $Y$, and by $\calB(X)$ the Banach space $\calB(X,X)$. We define Fourier transforms adapted to the $\dee$ and $\dbar$ operators:
\begin{align}
\label{F.direct}
\left(\calF f\right)(k) 	&=	\frac{1}{\pi} \int_\bbC e_{-k}(z) f(z) \, dm(z), \\[5pt]
\label{F.inverse}
\left(\calF^{-1} g\right) (z)	&=	\frac{1}{\pi} \int_{\bbC} e_k(z) g(k) \, dm(z)
\end{align}
where $dm(\dotarg)$ denotes Lebesgue measure on $\bbC$. 

We write $f \lesssim g$ to indicate an upper bound up to absolute numerical constants, and
$f \lesssim_{\, p} g$ to indicate an upper bound up to positive constants depending on $p$.

\subsection{Cauchy and Beurling Transforms} For $q \in (1,2)$, denote by $\tilde{q}$ the Sobolev conjugate given by  $\tilde{q}^{-1} = q^{-1}-1/2$. The solid Cauchy transform
\begin{equation}
\label{C}
\Cauchy \left[ f \right] (z) = \frac{1}{\pi} \int \frac{1}{z-\zeta} f(\zeta) \, dm(\zeta)
\end{equation}
satisfies $\dbar \circ \Cauchy = \Cauchy \circ \dbar = I$ on $C_0^\infty(\bbC)$. The following standard estimates
extend $\calC$ to larger function spaces and quantify the regularity of $\calC\left[ f \right]$. 
\begin{lemma}
\label{lemma:P1}
Suppose that $1<p<2<q<\infty$. For any $f \in L^{2q/(q+2)}(\bbC)$, 
\begin{equation}
\label{P1}
\norm{\Cauchy f}{q} \lesssim_{\, q} \norm{f}{2q/(q+2)}.
\end{equation}
For any $f \in L^q(\bbC)$, 
\begin{equation}
\label{P2}
\absval{ (\Cauchy f)(z) - (\Cauchy f)(w)} \lesssim_{\, q} \norm{f}{q} |z-w|^{(q-2)/q}.
\end{equation}
Finally, for any $f \in L^p(\bbC) \cap L^q(\bbC)$, 
\begin{equation}
\label{P3}
\norm{\Cauchy f}{\infty} \lesssim_{\, p, q}  \norm{f}{L^p \cap L^q}.
\end{equation}
\end{lemma}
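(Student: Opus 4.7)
The three inequalities are standard consequences of Hardy-Littlewood-Sobolev and Hölder applied to the explicit convolution kernel $1/(\pi z)$, which is (a constant multiple of) a Riesz potential of order one on $\bbR^2 \cong \bbC$. My plan is to handle (P1) by direct appeal to HLS, and (P2), (P3) by elementary Hölder-and-rescale arguments.

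For (P1) I would invoke the Hardy-Littlewood-Sobolev inequality applied to the Riesz kernel $|z|^{-1}$: in dimension two this corresponds to $n - \alpha = 1$, so HLS yields $\norm{\Cauchy f}{q} \lesssim_{\, q} \norm{f}{r}$ whenever $1/r - 1/q = 1/2$, and the choice $r = 2q/(q+2)$ realizes this identity.

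For (P2) I would use the elementary identity
\begin{equation*}
(\Cauchy f)(z) - (\Cauchy f)(w) = \frac{z - w}{\pi} \int_\bbC \frac{f(\zeta)}{(z-\zeta)(w-\zeta)} \, dm(\zeta)
\end{equation*}
and bound the integral by Hölder with conjugate exponents $q$ and $q' = q/(q-1)$. Since the hypothesis $q > 2$ gives $q' \in (1,2)$, the resulting $L^{q'}$-norm of the kernel reduces, by the change of variables $\zeta \mapsto z + |z-w|\eta$, to $|z-w|^{(2-2q')/q'}$ times a fixed universal integral in $\eta$; this universal integral converges precisely because $1 < q' < 2$ ensures local integrability at both $\eta = 0$ and $\eta = (w-z)/|w-z|$ and integrability at infinity. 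Multiplying by the prefactor $|z-w|$ produces $|z-w|^{1 + (2-2q')/q'} = |z-w|^{(q-2)/q}$, which is the claim.

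For (P3) I would split $\Cauchy f(z)$ as the sum of its integrals over $\{|z-\zeta| \le 1\}$ and $\{|z-\zeta| > 1\}$ and apply Hölder on each piece with a different exponent. On the near region, Hölder with exponent $q$ bounds the piece by $\norm{f}{q}$ times
\begin{equation*}
\left( \int_{|z-\zeta| \le 1} |z-\zeta|^{-q/(q-1)}\, dm(\zeta) \right)^{(q-1)/q},
\end{equation*}
which is finite because $q > 2$ forces $q/(q-1) < 2$. On the far region, Hölder with exponent $p$ yields the analogous bound in terms of $\norm{f}{p}$ with integrand $|z-\zeta|^{-p/(p-1)}$; here $p < 2$ forces $p/(p-1) > 2$, so the tail integral converges. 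Summing gives the $L^\infty$ bound. None of these steps presents a substantive obstacle; the proof is essentially an exercise in matching the hypothesis $1 < p < 2 < q < \infty$ to the local-versus-asymptotic integrability conditions required by HLS, the rescaling integral in (P2), and the two-piece split in (P3).
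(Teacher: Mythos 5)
Your proposal is correct, and its arguments are the standard ones. The paper itself gives no proof of Lemma~\ref{lemma:P1}; it simply cites Vekua~\cite{Vekua:1962} and Astala--Iwaniec--Martin~\cite{AIM:2009}, where the estimates are proved exactly along the lines you sketch: Hardy--Littlewood--Sobolev for the order-one Riesz potential (with $1/r - 1/q = 1/2$ giving $r = 2q/(q+2)$) for \eqref{P1}; the algebraic difference identity plus H\"older in $(q,q')$ plus the rescaling $\zeta \mapsto z + |z-w|\eta$ for \eqref{P2}; and the near/far split at $|z-\zeta|=1$ with H\"older in $q$ (using $q'<2$) and in $p$ (using $p'>2$) for \eqref{P3}. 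One cosmetic slip: the identity should read
\[
(\Cauchy f)(z) - (\Cauchy f)(w) = \frac{w-z}{\pi}\int_\bbC \frac{f(\zeta)}{(z-\zeta)(w-\zeta)}\,dm(\zeta),
\]
with numerator $w-z$ rather than $z-w$, which of course does not affect the absolute-value estimate. Your exponent bookkeeping --- in particular $1+(2-2q')/q'=(q-2)/q$ and the integrability thresholds --- is correct.
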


These estimates
are proved, for example, in Vekua \cite[Chapter I.6]{Vekua:1962} or Astala, Iwaniec, and Martin \cite[\S 4.3]{AIM:2009}. 

\begin{remark}
\label{rem:I1}
The estimates \eqref{P1}--\eqref{P3} are valid for the integral
$$ I_1(f) (x) = \int \frac{1}{|z-\xi|} f(\xi) \, dm(\xi). $$
The estimate \eqref{P1} in this instance is the case $\alpha=1$, $n=2$ of the 
Hardy-Littlewood-Sobolev inequality $\norm{|x|^{-\alpha} * f}{q} \lesssim_{\,p}\norm{f}{p}$ for $\dfrac{1}{q} = \dfrac{\alpha}{n} + \dfrac{1}{p} - 1$.
\end{remark}

The following lemma is standard (see for example, \cite[Lemma 2.2]{Perry:2016}).

\begin{lemma}
\label{lemma:P2}Suppose that $p\in(2,\infty)$, that $u\in L^{p}(\bbR%
^{2})$, that $f\in L^{2p/(p+2)}({\bbC})$, and that $\overline
{\partial}u=f$ in distribution sense. Then $u=\Cauchy f$. Conversely, if $f\in
L^{2p/(p+2)}({\bbC})$ and $u=\Cauchy f$, then $\dbar u=f$ in
distribution sense.
\end{lemma}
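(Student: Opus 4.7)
The plan is to prove the converse direction first by a density argument, and then deduce the forward direction from a Liouville-type observation for entire functions in $L^p$.

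For the converse, recall that $\dbar \circ \Cauchy = I$ holds pointwise (and hence distributionally) on $C_0^\infty(\bbC)$, as noted just above \eqref{C}. Given $f \in L^{2p/(p+2)}(\bbC)$, choose $\varphi_n \in C_0^\infty(\bbC)$ with $\varphi_n \to f$ in $L^{2p/(p+2)}(\bbC)$. Applying \eqref{P1} of Lemma \ref{lemma:P1} with $q = p$ (so that $2p/(p+2) = 2q/(q+2)$), we obtain $\Cauchy[\varphi_n] \to \Cauchy[f]$ in $L^p(\bbC)$, and a fortiori in $\calD'(\bbC)$. Since $\dbar$ is continuous on $\calD'(\bbC)$,
\[
\dbar \Cauchy[f] = \lim_{n \to \infty} \dbar \Cauchy[\varphi_n] = \lim_{n \to \infty} \varphi_n = f
\]
in the distributional sense.

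For the forward direction, set $w = u - \Cauchy[f]$. By hypothesis $\dbar u = f$, and by the converse just proved $\dbar \Cauchy[f] = f$, so $\dbar w = 0$ in $\calD'(\bbR^2)$. By hypoellipticity of $\dbar$ (Weyl's lemma), $w$ agrees almost everywhere with an entire function, which we again denote by $w$. Moreover $u \in L^p(\bbR^2)$ by assumption and $\Cauchy[f] \in L^p(\bbR^2)$ by \eqref{P1}, so $w \in L^p(\bbR^2)$ with $p < \infty$. Since $|w|^p$ is subharmonic (because $w$ is holomorphic and $t \mapsto e^{pt}$ is convex and increasing in $t = \log|w|$), the mean value inequality on the disk $\{\zeta : |\zeta - z| < R\}$ yields
\[
|w(z)|^p \leq \frac{1}{\pi R^2} \int_{|\zeta - z| < R} |w(\zeta)|^p \, dm(\zeta) \leq \frac{\norm{w}{p}^p}{\pi R^2},
\]
and letting $R \to \infty$ forces $w(z) = 0$ for every $z$. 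Therefore $u = \Cauchy[f]$.

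The statement is essentially standard and no serious obstacle is expected. What requires attention is the index bookkeeping: the mapping estimate of Lemma \ref{lemma:P1} with $q = p$ is precisely what both legitimizes the approximation step in the converse and places $\Cauchy[f]$ in the same $L^p$ class as $u$, so that the Liouville-type argument in the forward direction can be applied to the difference $u - \Cauchy[f]$.
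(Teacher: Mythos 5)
The paper does not prove this lemma itself but cites it as standard (referencing Perry 2016, Lemma 2.2); your argument is correct and is exactly the standard route: prove $\dbar\circ\Cauchy = I$ on $L^{2p/(p+2)}$ by density, then observe that the difference $u-\Cauchy f$ is an entire function lying in $L^p$ for finite $p$, and hence vanishes identically by a subharmonicity/Liouville argument. One trivial slip: the identity $\dbar\circ\Cauchy = I$ on $C_0^\infty(\bbC)$ is stated just \emph{below} \eqref{C}, not above it.
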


Similarly, to solve the equation $\dee u = f$, we introduce the operator
\begin{equation}
\label{Pbar}
\left[ \CauchyBar f \right](z) = \frac{1}{\pi} \int \frac{1}{\zbar - \zetabar} \,  f(\zeta) \, dm(\zeta)
\end{equation}
which obeys analogous estimates. We don't state the obvious analogue of Lemmas \ref{lemma:P1} and \ref{lemma:P2} for the operator $\CauchyBar$.

The following formulas will help find a basis for the nullspace of the integral operator that describes the one-soliton solution for fDSII. Let
\begin{equation}
\label{rho}
\rho(z) = \left(1+|z|^2 \right)^{1/2}.
\end{equation}
From the trivial identities
$$ \dbar\left(\zbar \rho^{-2}\right) = \rho^{-4}, \quad
    \dee\left(\rho^{-2}\right) = -\zbar \rho^{-4}, $$
their complex conjugates, and Lemma \ref{lemma:P2}, we easily deduce
\begin{subequations}
\label{PPbar.int}
\begin{alignat}{2}
\Cauchy\left[ \rho^{-4} \right] 
		&= \zbar \rho^{-2} \qquad
	& \CauchyBar\left[ \rho^{-4} \right] 
		&=z\rho^{-2}\\
\CauchyBar\left[ \zbar \rho^{-4} \right] 
		&= -\rho^{-2} \qquad
	& \Cauchy\left[ z\rho^{-4} \right] 
		&= -\rho^{-2}
\end{alignat}
\end{subequations}

\subsection{Beurling Operator}

Let
$$ 
\left(\calS f \right)(z) = -\frac{1}{\pi} 
	\lim_{\eps \darr 0} \
		\left( \int_{|z-z'|>\eps} \frac{1}{(z-z')^2} \, f(z') \ dm(z') \right)
$$
initially defined on $C_0^\infty(\bbC)$. The operator $\calS$ is the \emph{Beurling transform}. See for example \cite[\S 4.5.2]{AIM:2009} for proofs and discussion

\begin{lemma}
\label{lemma:S.bounded}
Suppose that $p \in (1,\infty)$. The operator $\calS$ extends to a bounded linear operator from $L^p(\bbC)$ to itself , unitary if $p=2$. Moreover, if $\nabla f \in L^p(\bbC)$ for some $p \in (1,\infty)$, $\calS(\dbar \phi) = \dee \phi$.
\end{lemma}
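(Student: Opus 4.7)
The plan is to handle the three assertions ($L^p$ boundedness for $1<p<\infty$, $L^2$ unitarity, and the intertwining identity) separately, relying on Calder\'on--Zygmund theory for the first two and a density argument for the third.

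For the $L^p$ boundedness, I would view $\calS$ as a Calder\'on--Zygmund convolution operator on $\bbR^2 \cong \bbC$ with kernel $K(z) = -1/(\pi z^2)$. The three standard hypotheses are easy to check: the size bound $|K(z)| \lesssim |z|^{-2}$ is the critical homogeneity in dimension two; the H\"ormander smoothness estimate $|K(z-w) - K(z)| \lesssim |w|/|z|^3$ for $|z| \ge 2|w|$ follows from $C^1$-differentiability of $K$ away from the origin; and the cancellation condition $\int_0^{2\pi} K(re^{i\theta})\, d\theta = 0$ is immediate since $K(re^{i\theta}) = -e^{-2i\theta}/(\pi r^2)$ integrates to zero on every circle. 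The $L^p$ bound then follows from the standard Calder\'on--Zygmund theorem.

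For the $L^2$ unitarity, I would compute the Fourier multiplier of $\calS$ directly. Writing $\zeta = \xi_1 + i\xi_2$ and using the standard Fourier transform on $\bbR^2$, the operators $\dbar$ and $\dee$ correspond to multiplication by $(i/2)\zeta$ and $(i/2)\overline{\zeta}$ respectively. The identity $\calS \dbar = \dee$ on $C_0^\infty$ (a consequence of $\dbar \Cauchy = I$ together with the short distributional computation $\dee \Cauchy = \calS$) then forces the Fourier symbol of $\calS$ to be $m(\zeta) = \overline{\zeta}/\zeta$. Since $|m(\zeta)| \equiv 1$, Plancherel's theorem gives that $\calS$ is an $L^2$ isometry; since $1/m(\zeta) = \zeta/\overline{\zeta}$ is equally bounded, $\calS$ is invertible on $L^2$ and hence unitary.

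For the intertwining identity on $\{f : \nabla f \in L^p(\bbC)\}$, I would argue by approximation. The identity $\calS(\dbar \phi) = \dee \phi$ holds on $C_0^\infty$ as above. Given general $f$ with $\nabla f \in L^p$, set $f_\eps = \phi_\eps * f$ for a standard mollifier; then $f_\eps \in C^\infty$ with $\dbar f_\eps = \phi_\eps * \dbar f \to \dbar f$ and $\dee f_\eps = \phi_\eps * \dee f \to \dee f$ in $L^p$. A further cutoff by $\chi_R \in C_0^\infty$ reduces to the already-proved case, and the $L^p$-continuity of $\calS$ lets us pass to the limit. The main obstacle is the cutoff step, since the assumption $\nabla f \in L^p$ does not directly control $f$ itself: a naive truncation $f_\eps \chi_R$ generates error terms proportional to $f_\eps \nabla \chi_R$ which need not be small in $L^p$. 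The cleanest workaround is to observe that $\calS \dbar - \dee$ already annihilates $C_0^\infty$, so it defines the zero operator as a tempered distribution; the $L^p$ boundedness of $\calS$ from part (i) then identifies $\calS(\dbar f)$ and $\dee f$ as the same element of $L^p(\bbC)$.
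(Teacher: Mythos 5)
The paper does not prove this lemma; it states it and cites Astala--Iwaniec--Martin \cite[\S 4.5.2]{AIM:2009}, so there is no internal proof to compare against. Your sketch is a self-contained rendering of the standard argument and is essentially correct: $L^p$-boundedness via Calder\'on--Zygmund convolution kernel theory (size, H\"ormander smoothness, and mean-zero cancellation of $K(z)=-1/(\pi z^2)$), $L^2$-unitarity via the Fourier multiplier $m(\zeta)=\overline{\zeta}/\zeta$ of modulus one, and the intertwining identity on general $f$ with $\nabla f \in L^p$ by extending the $C_0^\infty$ identity.

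The one place I would tighten the exposition is the last paragraph. You correctly identify that naive truncation fails because $\nabla f \in L^p$ does not control $f$, and you correctly gesture at the distributional route, but the closing sentence --- ``the $L^p$ boundedness of $\calS$ from part (i) then identifies $\calS(\dbar f)$ and $\dee f$ as the same element of $L^p$'' --- states the conclusion rather than the argument. The actual argument is short and worth writing out: $\calS$ is a convolution operator with a symmetric kernel, so $\langle \calS g, \psi\rangle = \langle g, \calS\psi\rangle$ for test functions $\psi$, and $\calS$ commutes with $\dbar$. Hence for $\psi \in C_0^\infty$,
\[
\langle \calS(\dbar f), \psi\rangle = \langle \dbar f, \calS\psi\rangle = -\langle f, \dbar(\calS\psi)\rangle = -\langle f, \calS(\dbar\psi)\rangle = -\langle f, \dee\psi\rangle = \langle \dee f, \psi\rangle,
\]
where the fourth equality uses the already-established identity on $C_0^\infty$. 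Since both $\calS(\dbar f)$ and $\dee f$ lie in $L^p$ and agree as distributions, they agree in $L^p$. This replaces the cutoff step entirely and avoids the growth issue you flagged, so you should lead with it rather than present it as a fallback.
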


\subsection{Mixed $L^p$ Spaces}
We review some basic facts about mixed $L^p$ spaces; a standard reference
is the paper of Benedek and Panzone \cite{BP:1961}. Suppose that $a$ is a
measurable function on $\mathbb{C}\times\mathbb{C}$. For $1<p,q<\infty$, we
define%
\[
\left\Vert a\right\Vert _{L^{p}\left(  L^{q}\right)  }=\left(  \int
_{\mathbb{C}}\left(  \int_{\mathbb{C}}\left\vert a(z,w)\right\vert
^{q}~dw\right)  ^{p/q}~dz\right)  ^{1/p}%
\]
and%
\[
\left\Vert a\right\Vert _{L^{p}\left(  L^{\infty}\right)  }=\left(
\int_{\mathbb{C}}\left\Vert a(z,~\cdot~)\right\Vert _{\infty}^{p}~dz\right)
^{1/p}.
\]
We denote by $L^{p}\left(  L^{q}\right)  $ (resp. $L^{p}\left(  L^{\infty
}\right)  $) the Banach space of complex-valued measurable functions $a$ on
$\mathbb{C}\times\mathbb{C}$ with $\left\Vert a\right\Vert _{L^{p}\left(
L^{q}\right)  }$ (resp. $\left\Vert a\right\Vert _{L^{p}\left(  L^{\infty
}\right)  }$) finite. Note that $L^{1}(L^{1})$ is the space $L^{1}%
(\mathbb{C}\times\mathbb{C})$. We have H\"{o}lder's inequality%
\[
\left\Vert ab\right\Vert _{L^{1}}\leq\left\Vert a\right\Vert _{L^{p}\left(
L^{q}\right)  }\left\Vert b\right\Vert _{L^{p^{\prime}}\left(  L^{q^{\prime}%
}\right)  }%
\]
and%
\begin{equation}
\left\Vert a\right\Vert _{L^{p}\left(  L^{q}\right)  }=\sup_{\left\Vert
g\right\Vert _{L^{p^{\prime}}(  L^{q^{\prime}}) = 1 }}\left\vert
\int_{\mathbb{C}\times\mathbb{C}}g(z,w)a(z,w)~dw~dz\right\vert .
\label{holder}%
\end{equation}
We denote by $L^{p}\cap L^{p^{\prime}}\left(  L^{q}\right)  $ the space
$L^{p}\left(  L^{q}\right)  \cap L^{p^{\prime}}\left(  L^{q}\right)  $ with
norm%
\[
\left\Vert a\right\Vert _{L^{p}\cap L^{p^{\prime}}\left(  L^{q}\right)
}=\left\Vert a\right\Vert _{L^{p}\left(  L^{q}\right)  }+\left\Vert
a\right\Vert _{L^{p\prime}\left(  L^{q}\right)  },
\]
while $L^{p}\left(  L^{q}\cap L^{q^{\prime}}\right)  $ denotes the space
$L^{p}\left(  L^{q}\right)  \cap L^{p}\left(  L^{q^{\prime}}\right)  $ with
norm%
\[
\left\Vert a\right\Vert _{L^{p}\left(  L^{q}\cap L^{q^{\prime}}\right)
}=\left\Vert a\right\Vert _{L^{p}\left(  L^{q}\right)  }+\left\Vert
a\right\Vert _{L^{p}\left(  L^{q^{\prime}}\right)  }.
\]

If $g\in\mathcal{S}\left(  \mathbb{C}\times\mathbb{C}\right)  $ and
$g_{\Delta}(\zeta)=g\left(  \zeta,\zeta\right)  $, then
\begin{equation}
\left\Vert g_{\Delta}\right\Vert _{L^{p}}\leq\left\Vert g\right\Vert
_{L^{p}\left(  L^{\infty}\right)  }\text{.} \label{diag}%
\end{equation}
For any complex-valued measurable function $a$ on $\mathbb{C}\times\mathbb{C}%
$, we denote by $a^{\ast}$ the measurable function%
\[
a^{\ast}(z,w)=\overline{a(w,z)}.
\]

\subsection{Perturbation Theory}
\label{subsec:Kato-Rellich}

In this subsection we recall some elements of Kato-Rellich perturbation theory as they apply to the perturbation of soliton solutions studied in \S \ref{sec:one-soliton}-\ref{sec:perturb}. We consider a norm-continuous mapping $t \mapsto A(t)$ from an open neighborhood  $U$ of $0$ in $\bbR^n$ to the compact operators on a Banach space $X$. 

Let us suppose that $A(0)$ has the isolated eigenvalue $1$. There is a $\delta>0$ so that the circle $|\lambda-1|=\delta$ divides the spectrum of $A(0)$ into disjoint sets, and there is an $\eps>0$ so that for all $t$ with $|t|<\eps$, the same circle divides the spectrum of $A(t)$ into two parts. We may form the projections
\begin{equation}
\label{P(t)}
P(t) = -\frac{1}{2\pi i} \oint_{|\lambda-1|=\delta} (A(t)-\lambda I)^{-1} \, d\lambda 
\end{equation}
and
\begin{equation}
\label{Q(t)}
Q(t) = I - P(t). 
\end{equation}
The projections $P(t)$ and $Q(t)$ are continuous operator-valued functions for $t$ with $|t|<\eps$. For each fixed $t$, $P(t)$ and $Q(t)$ commute with $A(t)$. Since $P(t)^2 = P(t)$ it follows that $P(t)Q(t)=0$. By decreasing $\eps$ if necessary we may assume that $\| P(t)-P(0) \|<1/2$ for all $t$ with $|t|<\eps$, so that $\dim P(t) = \dim P(0)$.

\begin{lemma}
\label{lemma:APQ} For all $t$ sufficiently small, 
the operator $(I-A(t))$ is invertible if and only the operator $(I-P(t)A(t)P(t))$ is invertible.
\end{lemma}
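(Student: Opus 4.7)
The plan is to use the Riesz projection decomposition already described to block-diagonalize $A(t)$, reduce the invertibility of $I-A(t)$ to separate questions on $\Ran P(t)$ and $\Ran Q(t)$, and then observe that the $\Ran Q(t)$ piece is automatically invertible while the $\Ran P(t)$ piece matches that of $I-P(t)A(t)P(t)$.

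First I would record that, because $P(t)$ commutes with $A(t)$ (this is immediate from \eqref{P(t)} since the resolvents $(A(t)-\lambda I)^{-1}$ commute with $A(t)$), so does $Q(t)=I-P(t)$. Thus the decomposition $X=\Ran P(t)\oplus \Ran Q(t)$ reduces $A(t)$, and on these complementary invariant subspaces we may write $A_P(t) := A(t)|_{\Ran P(t)}$ and $A_Q(t) := A(t)|_{\Ran Q(t)}$. In particular $A_P(t)$ agrees with the restriction of $P(t)A(t)P(t)$ to $\Ran P(t)$.

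Next I would invoke the standard spectral theory of the Riesz projection: by construction, $\sigma(A_P(t))$ lies inside the disk $|\lambda-1|<\delta$ while $\sigma(A_Q(t))$ lies in its exterior, so $1\notin \sigma(A_Q(t))$. Consequently $(I-A_Q(t))$ is invertible on $\Ran Q(t)$. Since in the block decomposition
\[
I-A(t) = \bigl(I-A_P(t)\bigr)\oplus\bigl(I-A_Q(t)\bigr),
\]
the operator $I-A(t)$ is invertible on $X$ if and only if $I-A_P(t)$ is invertible on $\Ran P(t)$.

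Finally I would analyze $I-P(t)A(t)P(t)$ in the same way. On $\Ran Q(t)$ the operator $P(t)A(t)P(t)$ vanishes (because $P(t)Q(t)=0$), so $I-P(t)A(t)P(t)$ restricts to the identity there; on $\Ran P(t)$ it restricts to $I-A_P(t)$. Hence $I-P(t)A(t)P(t)$ is invertible on $X$ if and only if $I-A_P(t)$ is invertible on $\Ran P(t)$. Combining the two equivalences yields the claim.

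The argument is essentially bookkeeping once one has the Riesz projection machinery, so there is no serious obstacle; the only point requiring care is the commutativity of $P(t)$ with $A(t)$ and the resulting reducing decomposition, which is standard for projections given by contour integrals of the resolvent.
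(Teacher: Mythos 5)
Your proof is correct and takes essentially the same approach as the paper: both reduce $A(t)$ along the invariant decomposition $X=\Ran P(t)\oplus\Ran Q(t)$, use that $1\notin\sigma\bigl(A(t)|_{\Ran Q(t)}\bigr)$, and match the $\Ran P(t)$ blocks of $I-A(t)$ and $I-P(t)A(t)P(t)$. The paper packages this as the one-line identity $(I-A)(I-QAQ)^{-1}=I-PAP$, which is just the factored form of your block-diagonal argument.
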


\begin{proof}
Let us write $A$, $P$, $Q$ for $A(t)$, $P(t)$, and $Q(t)$.
The operator $QAQ$ has no spectrum in the region $|\lambda-1|<\delta$ so the inverse $(I-QAQ)^{-1}$ exists for all $t$ with $|t|<\eps$. Computing
$$ (I-A)(I-QAQ)^{-1} = (I-PAP-QAQ)(I-QAQ)^{-1} = I-PAP $$
we see that $(I-A)^{-1}$ exists if and only if $(I-PAP)^{-1}$ exists.
\end{proof}

We can make a further reduction using an observation of Sz.-Nagy \cite{SzNagy:1946} already used by Gadyl'shin and Kiselev in their analysis of the one-soliton perturbation. Write $P_0$ for $P(0)$.

\begin{lemma}
\label{lemma:APQW}
For sufficiently small $t$, there is an invertible operator $V(t)$ so that 
$PAP$ is similar to $P_0V(t)^{-1}A(t)V(t)P_0$, and 
$I-\lambda PAP$ is invertible if and only if $I-\lambda P_0 V(t)^{-1} A (t) V(t) P_0$ is invertible.
\end{lemma}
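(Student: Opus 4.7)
The plan is to build $V(t)$ by the classical Sz.-Nagy intertwining construction so that $V(t)^{-1} P(t) V(t) = P_0$, and then observe that a similarity transformation preserves invertibility of $I - \lambda(\cdot)$.

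Concretely, I would set
\[
V(t) = P(t) P_0 + (I - P(t))(I - P_0).
\]
A direct computation gives $P(t) V(t) = P(t) P_0 = V(t) P_0$, so that once $V(t)$ is invertible one has the intertwining identity $V(t)^{-1} P(t) V(t) = P_0$. At $t=0$ one checks $V(0) = P_0 + (I-P_0) = I$. Since $P(t)$ depends continuously on $t$ (as recalled above), $V(t) \to I$ in operator norm as $t \to 0$, so by a Neumann series $V(t)$ is invertible on a possibly smaller neighborhood of $0$.

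With the intertwining in hand, write $A = A(t)$, $P = P(t)$, and compute
\[
V(t)^{-1} \, P A P \, V(t)
= \bigl(V(t)^{-1} P V(t)\bigr)\bigl(V(t)^{-1} A V(t)\bigr)\bigl(V(t)^{-1} P V(t)\bigr)
= P_0 \, V(t)^{-1} A(t) V(t) \, P_0,
\]
which is exactly the asserted similarity. Finally, for any $\lambda$ the operator
\[
I - \lambda P A P = V(t)\bigl(I - \lambda P_0 V(t)^{-1} A(t) V(t) P_0\bigr) V(t)^{-1},
\]
so invertibility of $I - \lambda PAP$ is equivalent to invertibility of $I - \lambda P_0 V(t)^{-1} A(t) V(t) P_0$.

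The only nontrivial step is the invertibility of $V(t)$, and that is immediate from norm-continuity of $P(t)$, so I expect no real obstacle here; the lemma is essentially a bookkeeping consequence of Sz.-Nagy's observation. If one prefers a unitary intertwiner (when $X$ is Hilbert), one can instead use $V(t)[I - (P(t)-P_0)^2]^{-1/2}$, but the simpler formula above suffices since we only need similarity.
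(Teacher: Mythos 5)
Your proof is correct and follows essentially the same route as the paper: both rely on the Sz.-Nagy intertwining operator, and the key identity $V^{-1}PV = P_0$ is exactly what makes the similarity and the invertibility equivalence immediate. The only cosmetic difference is that you drop the normalizing prefactor $(I-(P-P_0)^2)^{-1/2}$ from the paper's formula \eqref{V}; since that factor commutes with both $P$ and $P_0$, it is unnecessary for the intertwining and is needed only if one wants $V$ to be unitary in the Hilbert-space, orthogonal-projection setting, which the lemma does not require.
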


\begin{proof}
We set
\begin{equation}
\label{V}
V= \left(I-(P-P_0)^2 \right)^{-1/2} \left[ PP_0 + (1-P)(1-P_0) \right].
\end{equation}
It is not difficult to see that, if $\| P - P_0 \| < 1/2$,
\begin{equation}
\label{V.est}
\| V - I \| \leq 2 \| P - P_0 \| 
\end{equation}
and that $PAP$ is similar to 
\begin{equation}
\label{A.similar}
P_0 V^{-1} A V P_0 
\end{equation}
(see \cite{RS:1978}, notes to \S XII.2 and problem 19 of chapter XII, and see also the classic paper of Sz.-Nagy \cite{SzNagy:1946}).  It now follows that $(\lambda I-A)$ is invertible if and only if $\lambda I-P_0V^{-1}AVP_0$ is invertible.
\end{proof} 

\subsection{Eigenvalue Multiplicities}

In what follows we denote by $\Det(I + \, \dotarg \, )$ a generalized determinant defined on an algebra $\calE$ of compact operators on a Banach space $X$, 
having the following properties:

\smallskip

(i) $(I+A)$ is invertible if and only if $\Det(I+A) \neq 0$, and

\smallskip

(ii) $\Det(I+F) = \det(I+F) e^{\Tr F}$ for finite-rank operators $F$.

\smallskip

In applications, $X=L^p$, $\calE$ is the Mikhlin-Itskovich algebra, and
$\Det(I + \, \dotarg \,)$ is the generalized determinant described in Appendix \ref{sec:GGK}. 

In this subsection, we prove:
\begin{lemma}
\label{lemma:multiplicities}
Suppose that $A(\kappa)$ is a $C^1$ compact operator-valued function defined on an open neighborhood of $0$ in $\bbC$. Suppose further that the eigenvalue $\lambda=1$ of $A(0)$ is semisimple, and that $\Det(I-A(\kappa))=c|\kappa|^m(1+o(1))$ as $k \rarr 0$. Then $\ker (I-A(0))$ has dimension at most $m$.
\end{lemma}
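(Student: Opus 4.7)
The plan is to use the Kato--Rellich reductions from Lemmas \ref{lemma:APQ} and \ref{lemma:APQW} to express $\Det(I - A(\kappa))$, up to a factor bounded away from zero, as an ordinary $d \times d$ determinant where $d := \dim \ker(I - A(0))$, and then read off $d \le m$ from the order of vanishing of the matrix entries.

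First I would factorize $I - A(\kappa)$. The spectral projections $P(\kappa)$ and $Q(\kappa)$ of Lemma \ref{lemma:APQ} commute with $A(\kappa)$ by the functional calculus, so the cross terms $PAQ$ and $QAP$ vanish, giving $A = PAP + QAQ$ and $PAP \cdot QAQ = QAQ \cdot PAP = 0$, and hence
$$
I - A(\kappa) = (I - PAP)(I - QAQ).
$$
Applying the multiplicativity of the generalized determinant $\Det$ on $\calE$, and noting that $I - QAQ$ is invertible near $\kappa = 0$ with $\Det(I - QAQ)$ continuous, I obtain $\Det(I - A(\kappa)) = \Det(I - PAP) \cdot g(\kappa)$ with $g$ continuous and nonvanishing at $0$. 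Hence $\Det(I - P(\kappa)A(\kappa)P(\kappa)) = c'|\kappa|^m (1 + o(1))$ with $c' \neq 0$.

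Next, since $\dim \Ran P(\kappa) = \dim \Ran P_0 = d$ is finite and constant, $PAP$ is of finite rank, and property (ii) of the determinant gives $\Det(I - PAP) = \det(I - PAP)\, e^{-\Tr(PAP)}$ with the exponential factor continuous and nonzero near $\kappa = 0$. By Lemma \ref{lemma:APQW}, $PAP$ is similar to $P_0 V(\kappa)^{-1} A(\kappa) V(\kappa) P_0$, so restricting to $\Ran P_0$ yields a $d \times d$ matrix $B(\kappa)$ with $|\det(I - B(\kappa))| = c''|\kappa|^m (1 + o(1))$, $c'' \neq 0$. From the explicit formula \eqref{V} one checks that $V(0) = I$, and the semisimplicity of $\lambda = 1$ for $A(0)$ forces $A(0)|_{\Ran P_0} = I$, so $B(0) = I|_{\Ran P_0}$. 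The $C^1$ regularity of $A$, and the induced $C^1$ regularity of $P$ and $V$, then yields $B(\kappa) - I = O(|\kappa|)$ entrywise, so expanding $\det(I - B(\kappa))$ as a signed sum over $S_d$ of products of $d$ entries gives $|\det(I - B(\kappa))| \le C|\kappa|^d$. Comparison with the asymptotic $c''|\kappa|^m$ forces $m \ge d$, which is the desired bound.

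The main obstacle will be justifying the multiplicativity $\Det\bigl((I + F)(I + G)\bigr) = \Det(I + F)\, \Det(I + G)$ for $F, G \in \calE$ together with verifying that the operators $-P(\kappa) A(\kappa) P(\kappa)$ and $-Q(\kappa) A(\kappa) Q(\kappa)$ actually lie in $\calE$; both should be inherent to the Gohberg--Goldberg--Krein construction on the Mikhlin--Itskovich algebra, but the second point requires checking that $\calE$ is closed under compression by the spectral projections $P(\kappa)$, $Q(\kappa)$, which in turn depends on showing that these projections (or their differences from $P_0, Q_0$) interact appropriately with the algebra.
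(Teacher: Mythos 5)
Your proposal is correct and follows essentially the same route as the paper's own proof: factor $I-A(\kappa)=(I-PAP)(I-QAQ)$ using the commuting spectral projections, invoke the multiplicativity formula \eqref{Det.Multiply} (whose correction term $\exp(-\Tr(PAP\cdot QAQ))$ is trivial since $PAP\cdot QAQ=0$), absorb the nonvanishing $\Det(I-QAQ)$ factor, pass to $P_0V^{-1}A V P_0$ via Lemma~\ref{lemma:APQW}, and use semisimplicity plus $C^1$ regularity to show the resulting $d\times d$ matrix $I-B(\kappa)$ has all entries $O(|\kappa|)$ so that $\det(I-B(\kappa))=O(|\kappa|^d)$. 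The paper carries out the last step by fixing an eigenbasis $\{\psi_i\}$ and dual basis $\{\chi_i\}$ for $P_0$ and writing out the matrix entries $M_{ij}=\langle\chi_i,[I-V^{-1}AV]\psi_j\rangle$, while you invoke property (ii) ($\Det(I+F)=\det(I+F)e^{\Tr F}$) to pass to the ordinary finite-rank determinant; these are the same calculation. Your closing concern about whether $\calE$ is closed under compression by $P(\kappa)$, $Q(\kappa)$ is a fair point that the paper also does not address explicitly, but since $P(\kappa)-P_0$ is a norm-small perturbation built from $A(\kappa)\in\calE$ via the resolvent contour integral, and $\calE$ is a Banach algebra in which the finite-rank operators are dense, the compressions do remain in $\calE$.
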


\begin{proof} 
The operator $A(0)$ is compact so $\ker(I-A(0))$ is at most finite-dimensional. Moreover, for $\kappa$ small, there is a $\delta>0$ so that the circle $|\lambda-1|=\delta$ divided the spectrum of $A(\kappa)$ into two disjoint parts. Let $P$, $Q$, $P_0$, $Q_0$, $V$ be as in \S \ref{subsec:Kato-Rellich} above. We analyze $\Det(I-A(\kappa))$ for $\kappa$ small by splitting $I-A=I-PAP-QAQ$. Using the determinant formula \eqref{Det.Multiply}, we factor
$$(I-PAP-QAQ)=(I-PAP) (I-QAQ)$$ (since $PQ=QP=0)$ and conclude from \eqref{Det.Multiply} that
$$ \Det(I-A)=\Det(I-PAP)\Det(I-QAQ) $$
since
$PAP \cdot QAQ =0$. Moreover, from the discussion in the previous section, $PAP$ is similar to $P_0 V^{-1}A V P_0$ so
$$\Det(I-A) = \Det\left(I-P_0 V^{-1} A V P_0 \right) \Det(I-QAQ).$$
The second factor is nonvanishing and has a finite nonzero limit as $\kappa \rarr 0$, so 
the leading asymptotics are determined by the first factor. Since $A(\kappa)-A(0) = \bigO{|k|}$ in operator norm as $\kappa \rarr 0$, it follows that, also,
$\| P-P_0 \| = \bigO{|k|}$, $V-I = \bigO{|k|}$. Since $A(0)$ has semisimple eigenvalues and $P_0 V^{-1} A V P_0$ is a rank $N$ operator, we may choose a basis of  eigenvectors $\{ \psi_i \}_{i=1}^N$ for $A(0)$ in $X$ and a dual basis $\{ \chi_i \}_{i=1}^N$ in $X^*$ so that
$$ \left\langle \chi_i, \psi_j \right\rangle = \delta_{ij} $$
where $\left\langle \dotarg, \dotarg \right\rangle$ is the usual dual pairing. It follows that 
$$ P_0 = \sum_{i=1}^N \left\langle \chi_i, \dotarg \right\rangle  \psi_i .$$
Hence $\Det(I-P_0 V^{-1} A V P_0)$ is, up to strictly nonzero factors, the determinant of the $N \times N$ matrix $M$ with 
$$ M_{ij} = \left\langle \chi_i, \left[I - V^{-1} A V\right] \psi_j\right\rangle. $$
Since $V^{-1} A(\kappa) V = A(0)+ \bigO{|\kappa|}$, it follows that $M_{ij} =\bigO{|\kappa|}$ and so
$$ \det M = \bigO{|\kappa|^N}. $$
Hence $m \geq N$. 
\end{proof}

\begin{remark}
The conclusion of Lemma \ref{lemma:multiplicities} is false if the eigenvalue $\lambda=1$ is not semisimple. To see this, consider the matrix
$$ 
A(\eps) = 
\left( 
		\begin{array}{ccc} 
			1 & 1 & \eps \\ \eps & 1 & 1  \\ \eps & \eps &1
		\end{array}
	\right)
$$
for which  $\det(I-A(\eps)) = \eps^3+\eps$, so $N=3$ but $m=1$).
\end{remark}

\section{A Fredholm Determinant for Direct Scattering}
\label{sec:direct}

In this section we characterize the exceptional set $Z$ as the zero set of a renormalized Fredholm determinant associated to the scattering problem \eqref{CGO.m}. 

\subsection{Reduction by Symmetries}

For $p>2$, $u \in L^2$, and $k \in \bbC$, define an operator $S_{k,u} \in \calB(L^p)$ by
\begin{equation}
\label{SKU}
S_{k,u} h = 
		\frac{1}{4}\Cauchy \left( u e_{-k} \CauchyBar \left( e_k \ubar h \right) \right).
\end{equation}
In this subsection, we prove:

\begin{proposition}
\label{prop:symmetry} Suppose that $u \in L^2(\bbC)$ and $p>2$. Then $k \in \bbC$ is an exceptional point  for the problem \eqref{CGO.m} if and only if $\ker_{L^p} (I - S_{k,u})$ is nontrival.
\end{proposition}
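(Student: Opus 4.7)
The plan is to reduce the matrix system \eqref{CGO.m} to a scalar integral equation for a single entry of $M$ by exploiting the block structure of \eqref{CGO.m.pde}. First I observe that \eqref{CGO.m.pde} decouples into two independent first-order systems, one for each column of $M$. The first-column system reads
\begin{align*}
\dbar M_{11} &= \tfrac{1}{2} u M_{21}, & (\dee+ik) M_{21} &= -\tfrac{1}{2}\ubar M_{11},
\end{align*}
and the second-column system is analogous with $\dbar,\dee+ik$ replaced by $\dbar-i\kbar,\dee$. A short complex-conjugation check shows that if $(M_{12},M_{22}) \in L^p\times L^p$ solves the second-column system then $(\overline{M_{22}},-\overline{M_{12}})$ solves the first-column system, so $k$ is exceptional if and only if the first-column system admits a nontrivial $L^p$ solution.

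Next I decouple this first-column system using the Cauchy operators. Given an $L^p$ solution $(M_{11},M_{21})$, H\"older gives $u M_{21} \in L^{2p/(p+2)}$, so Lemma \ref{lemma:P2} identifies $M_{11} = \tfrac{1}{2}\Cauchy(u M_{21})$ (the possible antiholomorphic part being excluded because any antiholomorphic function in $L^p(\bbC)$ vanishes identically). For the remaining equation I set $\tilde M_{21} = e_k M_{21}$, which stays in $L^p$ since $|e_k|=1$, and compute $\dee \tilde M_{21} = e_k(\dee+ik)M_{21} = -\tfrac{1}{2} e_k \ubar M_{11}$; the $\CauchyBar$-analog of Lemma \ref{lemma:P2} then forces $M_{21} = -\tfrac{1}{2} e_{-k}\CauchyBar(e_k\ubar M_{11})$. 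Substituting into the formula for $M_{11}$ collapses the two equations into a scalar fixed-point equation equivalent to $(I-S_{k,u})M_{11}=0$, exhibiting $M_{11}$ as a nontrivial kernel element.

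For the converse, given a nonzero $h\in \ker_{L^p}(I-S_{k,u})$, I reverse the procedure: set $M_{11}:=h$, $M_{21}:= -\tfrac{1}{2} e_{-k}\CauchyBar(e_k \ubar h)$, and complete the second column via the symmetry $M_{12}:=-\overline{M_{21}}$, $M_{22}:= \overline{M_{11}}$. Lemma \ref{lemma:P1} places $M_{21}\in L^p$, and differentiating the defining formulas, using $\dbar\Cauchy = I$ and $\dee\CauchyBar=I$, recovers \eqref{CGO.m.pde} entrywise, producing a nontrivial $L^p$ matrix solution and certifying $k$ as exceptional. Along the way I must also verify $S_{k,u}\in\calB(L^p)$; this follows by iterating H\"older and Lemma \ref{lemma:P1} along the scheme $L^p \xrightarrow{\ubar\,\cdot} L^{2p/(p+2)} \xrightarrow{\CauchyBar} L^p \xrightarrow{u\,\cdot} L^{2p/(p+2)} \xrightarrow{\Cauchy} L^p$.

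The main technical obstacle will be the unique-inversion step: both $\dbar^{-1}$ and $\dee^{-1}$ are a priori determined only modulo an antiholomorphic or holomorphic term, and to pin down the representations above one must invoke the standard fact that a holomorphic (resp.\ antiholomorphic) function in $L^p(\bbC)$ vanishes identically. The column-symmetry reduction is conceptually clean but requires careful bookkeeping of conjugates and of the distinction between the boundary conditions \eqref{CGO.m.bc} and the homogeneous $L^p$ setting appropriate to the exceptional case.
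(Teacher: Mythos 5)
Your argument is correct and follows essentially the same route as the paper: the symmetry relating the two columns of $M$, the conversion to integral form via $\Cauchy$ and $\CauchyBar$ (Lemma~\ref{lemma:P2}), and the iteration to the scalar equation $(I-S_{k,u})M_{11}=0$ are exactly the content of Lemmas~\ref{lemma:CGO.sym}--\ref{lemma:CGO.scal}; the only stylistic difference is that you work directly with the homogeneous $L^p$ problem, whereas the paper proves the inhomogeneous statements and then invokes the Fredholm alternative. Two small points: when you reduce the first column to the scalar equation, you should note explicitly that $M_{11}\neq 0$, for if $M_{11}=0$ then $(\dee+ik)M_{21}=0$ makes $e_k M_{21}$ an antiholomorphic $L^p$ function, hence zero, contradicting nontriviality of the column; and the parenthetical ``antiholomorphic part'' excluded when inverting $\dbar$ should read ``holomorphic'' (the kernel of $\dbar$ is holomorphic, of $\dee$ antiholomorphic), though this is immaterial since Lemma~\ref{lemma:P2} already pins down the inverse.
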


Recall that $k$ is an exceptional point for \eqref{CGO.m} if the problem \eqref{CGO.m.pde} has
a nontrivial solution with $M(z,k) \in L^p(\bbC)$. We reduce to 
a single integral equation involving the integral operator \eqref{SKU} in several steps.

\begin{lemma}
\label{lemma:CGO.sym}
Fix $p>2$. Suppose that $u \in L^2(\bbC)$ and $k \in \bbC \setminus Z$. Then, the unique solution $M$ of \eqref{CGO.m}
with 
$$M-\twomat{1}{0}{0}{1} \in L^p(\bbC)$$ takes the form
\begin{equation}
\label{m.sym}
M(z,k) = \twomat{m_{1}(z,k)}{-\overline{m_{2}(z,k)}}{m_{2}(z,k)}{\overline{ m_{1}(z,k)}}
\end{equation}
where
\begin{subequations}
\label{CGO.m.sym}
\begin{align}
\label{CGO.m.sym1}
\dbar m_{1}	&=	\frac{1}{2} u m_{2} \\
\label{CGO.m.sym2}
(\dee + ik) m_{2} &= - \frac{1}{2} \ubar m_{1} \\
\label{CGO.m.sym.bc}
 (m_{1}(z,k)-1, m_{2}(z,k)) &\in L^p(\bbC)
\end{align}
\end{subequations}
\end{lemma}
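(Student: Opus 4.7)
The plan is to exploit the built-in symmetry of the PDE system \eqref{CGO.m.pde}, which comes from the fact that the potential $Q=\offdiagmat{u}{-\ubar}$ is anti-skew-Hermitian in a precise sense, and then invoke uniqueness at regular points to force the second column of $M$ to be determined by the first.

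Concretely, given the unique $M$ with $M-I\in L^p(\bbC)$, I would introduce
$$
N(z,k) = \twomat{\overline{M_{22}(z,k)}}{-\overline{M_{21}(z,k)}}{-\overline{M_{12}(z,k)}}{\overline{M_{11}(z,k)}},
$$
and verify directly that $N$ again solves \eqref{CGO.m.pde}. For instance, using $\overline{\dee f}=\dbar \fbar$ and $\overline{\dbar f}=\dee \fbar$ one checks
$$
\dbar N_{11}=\overline{\dee M_{22}}=\overline{-\tfrac12\ubar M_{12}}=\tfrac12 u N_{21},
$$
and the other three equations follow by analogous one-line computations; the key algebraic fact is that conjugation turns $\dee+ik$ into $\dbar-i\kbar$, which lines up the two off-diagonal equations with one another.

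Next I would verify the asymptotic/integrability condition: since $M-I\in L^p(\bbC)$, each entry-difference $M_{jj}-1$ and each off-diagonal entry $M_{ij}$ $(i\ne j)$ is in $L^p$, hence so are their complex conjugates, and one reads off $N-I\in L^p(\bbC)$. Because $k\notin Z$ is a regular point, the $L^p$-normalized solution of \eqref{CGO.m} is unique, so $N=M$. Comparing entries yields $M_{22}=\overline{M_{11}}$ and $M_{12}=-\overline{M_{21}}$, which is exactly \eqref{m.sym} with $m_1:=M_{11}$, $m_2:=M_{21}$.

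Finally, substituting this reduced form back into the (1,1) and (2,1) components of \eqref{CGO.m.pde} produces \eqref{CGO.m.sym1}--\eqref{CGO.m.sym2}, while the $L^p$ condition on $M-I$ directly gives \eqref{CGO.m.sym.bc}. No step here is a genuine obstacle: once the conjugation symmetry of the $Q$-coupled system is spelled out, the lemma is a clean application of the uniqueness hypothesis at regular points. The only place that requires mild care is the bookkeeping of $\dee/\dbar$ and $k/\kbar$ under complex conjugation, so I would write out the four-line verification explicitly rather than appealing to a symmetry in the abstract.
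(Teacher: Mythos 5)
Your proof is correct and uses essentially the same symmetry-plus-uniqueness argument as the paper, which states in one line that the symmetric form \eqref{m.sym} solves \eqref{CGO.m} and appeals to unicity. Your write-up is slightly more explicit (constructing $N$ from the given unique $M$, verifying that conjugation swaps $\dee+ik$ with $\dbar-i\kbar$, and then invoking uniqueness to force $N=M$), but it is the same route.
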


\begin{proof}
A straightforward computation shows that the function \eqref{m.sym} solves \eqref{CGO.m}, so the result now 
follows by unicity.
\end{proof}

Thus, to compute the exceptional set, it suffices to study the system \eqref{CGO.m.sym}. By Lemma \ref{lemma:P2}, we can reduce the system \eqref{CGO.m.sym} to a system of integral equations
using the Cauchy transform.  In what follows, the condition $u \in L^{2p/(p+2)}(\bbC)$ insures that expressions such as $\CauchyBar (e_k \ubar)$ define functions in $L^p(\bbC)$. 

\begin{lemma}
\label{lemma:CGO.int}
Fix $p>2$, $u \in L^2(\bbC) \cap L^{2p/(p+2)}(\bbC)$, and $k \in \bbC \setminus Z$. A vector-valued function $\bfm= (m_1,m_2)$ with $(m_1 -1, m_2) \in  L^p(\bbC)$ solves \eqref{CGO.m.sym} if and only if 
\begin{subequations}
\label{CGO.m.int}
\begin{align}
\label{CGO.m.int1}
m_1	&=	1	+	\Cauchy\left( \frac{1}{2} u m_2 \right) \\
\label{CGO.m.int2}
m_2	&=			-\frac{1}{2} e_{-k} \Cauchybar \left( e_k \ubar m_1 \right)
\end{align}
\end{subequations}
\end{lemma}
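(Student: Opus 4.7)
The plan is to treat the two scalar equations of \eqref{CGO.m.sym} separately and pass between the $\dbar/\dee$ form and the integral form via Lemma \ref{lemma:P2} and its $\CauchyBar$-analogue. For the first equation $\dbar m_1 = \tfrac{1}{2} u m_2$, Hölder's inequality with $\tfrac{1}{2} + \tfrac{1}{p} = (p+2)/(2p)$ together with $u \in L^2(\bbC)$ and $m_2 \in L^p(\bbC)$ shows that $u m_2 \in L^{2p/(p+2)}(\bbC)$. Lemma \ref{lemma:P2} then states that, for a function with $m_1 - 1 \in L^p(\bbC)$, the distributional identity $\dbar(m_1 - 1) = \tfrac{1}{2} u m_2$ is equivalent to $m_1 - 1 = \Cauchy(\tfrac{1}{2} u m_2)$, i.e.\ to \eqref{CGO.m.int1}.

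For the second equation the twist by $ik$ is removed via a gauge transformation. Setting $n_2 = e_k m_2$, the identity $\dee e_k = ik\, e_k$ gives $\dee n_2 = e_k (\dee + ik) m_2$, so \eqref{CGO.m.sym2} becomes $\dee n_2 = -\tfrac{1}{2}\, e_k \ubar m_1$. Splitting $e_k \ubar m_1 = e_k \ubar + e_k \ubar (m_1 - 1)$, the first summand lies in $L^{2p/(p+2)}(\bbC)$ by the hypothesis $u \in L^{2p/(p+2)}(\bbC)$ (since $|e_k| = 1$), while the second lies there by Hölder as before, using $\ubar \in L^2(\bbC)$ and $m_1 - 1 \in L^p(\bbC)$. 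Since $|e_k|=1$, one also has $n_2 \in L^p(\bbC)$ iff $m_2 \in L^p(\bbC)$. The $\dee/\CauchyBar$ analogue of Lemma \ref{lemma:P2} (which holds because $\CauchyBar$ satisfies the same mapping properties as $\Cauchy$) then yields $n_2 = -\tfrac{1}{2}\, \CauchyBar(e_k \ubar m_1)$, and multiplying by $e_{-k}$ produces \eqref{CGO.m.int2}.

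The reverse direction is a direct verification. Applying $\dbar$ to \eqref{CGO.m.int1} and using $\dbar \circ \Cauchy = I$ recovers \eqref{CGO.m.sym1}. Applying $(\dee + ik)$ to \eqref{CGO.m.int2}, the outer $e_{-k}$ is absorbed by the identity $(\dee + ik)(e_{-k} h) = e_{-k}\, \dee h$, and $\dee \circ \CauchyBar = I$ then recovers \eqref{CGO.m.sym2}. The membership conditions \eqref{CGO.m.sym.bc} follow from the integral identities combined with estimate \eqref{P1}, which gives $\Cauchy(\tfrac{1}{2} u m_2) \in L^p(\bbC)$ and $\CauchyBar(e_k \ubar m_1) \in L^p(\bbC)$. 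No serious obstacle is anticipated: the substantive work consists entirely of the bookkeeping that places each source term in $L^{2p/(p+2)}(\bbC)$ (via the two integrability hypotheses on $u$ and Hölder's inequality) and the tracking of the unimodular gauge factors $e_{\pm k}$ through the Cauchy-type solution operators.
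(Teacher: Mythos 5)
Your proof is correct and follows exactly the route the paper indicates (it explicitly omits the proof as ``easy'' and notes afterward that the point is H\"older's inequality to place products in $L^{2p/(p+2)}$, together with the $L^{2p/(p+2)} \to L^p$ mapping property of $\Cauchy$, $\CauchyBar$ and Lemma~\ref{lemma:P2}). The gauge conjugation $n_2 = e_k m_2$ to absorb the $+ik$ shift, the split $m_1 = 1 + (m_1 - 1)$ to use both integrability hypotheses on $u$, and the appeal to the $\CauchyBar$-analogue of Lemma~\ref{lemma:P2} are precisely the intended bookkeeping.
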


Finally, we can iterate to a scalar integral equation \eqref{CGO.scal}:

\begin{lemma}
\label{lemma:CGO.scal}
Fix $p>2$, $u \in L^2(\bbC) \cap L^{2p/(p+2)}(\bbC)$, and $k \in \bbC \setminus Z$. The vector-valued function $\bfm$ 
with $(m_1 -1, m_2) \in  L^p(\bbC)$ solves \eqref{CGO.m.int} if and only if 
\begin{equation}
\label{CGO.scal}
m_1	=	 1	- \frac{1}{4} \Cauchy\left(  u e_{-k} \Cauchybar \left( e_k \ubar m_1 \right) \right) 
\end{equation}
and
\begin{equation}
\label{CGO.scal.m2}
m_2 = -\frac{1}{2} e_{-k} \Cauchybar \left( e_k \ubar m_1 \right). 
\end{equation}
\end{lemma}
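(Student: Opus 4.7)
The statement is essentially a closed-form elimination of $m_{2}$ from the $2 \times 2$ system \eqref{CGO.m.int}. The plan is to prove both implications by direct substitution, with the only nontrivial content being the verification that each composition of $\calC$ and $\overline{\calC}$ appearing in the reduction maps the relevant function into a space where the next operator is defined.

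For the forward direction, I would assume $\bfm = (m_1,m_2)$ with $(m_1-1, m_2) \in L^p(\bbC)$ satisfies \eqref{CGO.m.int1}--\eqref{CGO.m.int2}. Substituting \eqref{CGO.m.int2} into the right-hand side of \eqref{CGO.m.int1} yields
\begin{equation*}
m_1 = 1 + \Cauchy\!\left( \tfrac{1}{2} u \cdot \bigl(-\tfrac{1}{2} e_{-k} \CauchyBar(e_k \ubar m_1)\bigr)\right) = 1 - \tfrac{1}{4}\Cauchy\!\left( u\, e_{-k}\, \CauchyBar(e_k \ubar m_1)\right),
\end{equation*}
which is \eqref{CGO.scal}; and \eqref{CGO.scal.m2} is a restatement of \eqref{CGO.m.int2}. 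For the reverse direction, one assumes $m_1-1 \in L^p$ satisfies \eqref{CGO.scal}, defines $m_2$ by \eqref{CGO.scal.m2}, and reads off \eqref{CGO.m.int2} directly; the equation \eqref{CGO.m.int1} is then obtained by substituting the definition of $m_2$ back into the right-hand side of \eqref{CGO.scal}.

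The only thing to check carefully is that each intermediate quantity lies in a space to which the next Cauchy/$\overline{\text{Cauchy}}$ transform applies, so that the rearrangement is legitimate and $m_2 \in L^p$. Writing $\ubar m_1 = \ubar + \ubar(m_1-1)$ and using $u \in L^2 \cap L^{2p/(p+2)}$, $(m_1-1) \in L^p$, Hölder's inequality (with exponent relation $1/(2p/(p+2)) = 1/2+1/p$) gives $\ubar m_1 \in L^{2p/(p+2)}$; since $|e_k|=1$, the same holds for $e_k \ubar m_1$. By Lemma~\ref{lemma:P1} (applied to the $\dee$-analogue $\CauchyBar$), $\CauchyBar(e_k \ubar m_1) \in L^p$, so $m_2 \in L^p$; a second application of Hölder puts $u m_2 \in L^{2p/(p+2)}$, and Lemma~\ref{lemma:P1} again places $\Cauchy(u m_2) \in L^p$. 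This chain of estimates also ensures all the quantities involved in the double Cauchy transform on the right of \eqref{CGO.scal} are well-defined and lie in $L^p$.

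I do not expect any real obstacle here; the lemma is a mechanical scalarization, and the boundedness/mapping chain above is the only content beyond substitution. The hypothesis $k \in \bbC \setminus Z$ plays no role in the equivalence itself, only in guaranteeing uniqueness of solutions (as in the preceding two lemmas).
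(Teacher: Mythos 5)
Your proposal is correct and matches what the paper intends: the paper explicitly omits this proof as ``easy'' but records exactly the mapping observation you verify, namely that $\Cauchy,\CauchyBar\colon L^{2p/(p+2)}\to L^p$ and that $uf\in L^{2p/(p+2)}$ for $u\in L^2$, $f\in L^p$, by H\"older. Your splitting $\ubar m_1 = \ubar + \ubar(m_1-1)$ and the chain of applications of Lemma~\ref{lemma:P1} (and its $\CauchyBar$ analogue) is precisely the substance the paper leaves to the reader, and your remark that $k\notin Z$ is irrelevant to the equivalence itself is also correct.
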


We omit the (easy) proofs of Lemmas \ref{lemma:CGO.int} and \ref{lemma:CGO.scal}. The compositions with $\Cauchy$ and $\Cauchybar$ make sense since $\Cauchy, \Cauchybar: L^{2p/(p+2)} \rarr L^p$ by \eqref{P1} and $u f$ in $L^{2p/(p+2)}$ by H\"{o}lder's inequality provided  $u \in L^2$ and $f \in L^p$. 

We now complete the reduction to a scalar problem. We will sometimes
decompose 
$$ S_{k,u} = W_{k,u} \circ V_{k,u} $$
where
$$ W_{k,u} h = \frac{1}{2} \Cauchy\left( u e_{-k} h \right)$$
and
$$ V_{k,u} h =  \frac{1}{2} \Cauchybar\left(  e_k \ubar h \right)$$

\begin{lemma}
\label{lemma:S}
Fix $p>2$ and $k \in \bbC$. Suppose that $u \in L^2 \cap L^{2p/(p+2)}$. Then, the operator $S_{k,u}$ is compact  as an operator from $L^p(\bbC)$ to itself. 
\end{lemma}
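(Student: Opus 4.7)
The plan is to prove compactness by decomposing $S_{k,u} = W_{k,u} \circ V_{k,u}$ and using a density argument in the variable $u$. First I would verify that each factor is bounded on $L^p(\bbC)$. For $h \in L^p(\bbC)$, H\"older's inequality places $\ubar h$ and $u e_{-k} h$ in $L^{2p/(p+2)}(\bbC)$ with norms bounded by $\norm{u}{L^2}\norm{h}{L^p}$, so Lemma \ref{lemma:P1} together with its $\Cauchybar$-analogue gives $V_{k,u}, W_{k,u} \in \calB(L^p)$ with operator norm $\lesssim \norm{u}{L^2}$. Hence $S_{k,u} \in \calB(L^p)$ with $\norm{S_{k,u}}{\calB(L^p)} \lesssim \norm{u}{L^2}^2$.

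Next I would prove compactness of $S_{k,u_n}$ when $u_n \in C_0^\infty(\bbC)$. Given a bounded sequence $\{h_j\}$ in $L^p(\bbC)$, the sequence $g_j := V_{k,u_n} h_j$ is bounded in $L^p$, and since $u_n$ is bounded with support in a fixed compact set $K_n$, the sequence $\phi_j := u_n e_{-k} g_j$ is bounded in $L^p(\bbC)$ and supported in $K_n$. I would then analyze $f_j := \tfrac12\Cauchy(\phi_j)$ via three applications of Lemma \ref{lemma:P1}: estimate \eqref{P2} with $q = p > 2$ gives a uniform H\"older bound on $\{f_j\}$; estimate \eqref{P3} gives a uniform $L^\infty$ bound; and the crude pointwise estimate
\[
|f_j(z)| \lesssim \frac{\norm{\phi_j}{L^1}}{|z| - \operatorname{diam}(K_n)}, \qquad |z|\ \text{large},
\]
combined with $\norm{\phi_j}{L^1} \leq |K_n|^{1-1/p}\norm{\phi_j}{L^p}$, yields uniform $1/|z|$ decay at infinity. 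The H\"older and $L^\infty$ bounds, via Arzel\`a--Ascoli and a diagonal extraction, deliver a subsequence converging uniformly on every ball, while the decay bound gives $\int_{|z|>R}|f_j|^p \, dm(z) \lesssim R^{2-p}$ uniformly in $j$ (using $p > 2$). Combining the two, the subsequence converges in $L^p(\bbC)$, so $S_{k,u_n}$ is compact.

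Finally I would pass from $C_0^\infty$ to a general $u \in L^2 \cap L^{2p/(p+2)}$ by density, choosing $u_n \in C_0^\infty(\bbC)$ with $u_n \rarr u$ in $L^2 \cap L^{2p/(p+2)}$. The identity
\[
S_{k,u} - S_{k,u_n} = W_{k,u-u_n} V_{k,u} + W_{k,u_n} V_{k,u-u_n}
\]
combined with the boundedness estimates from the first paragraph yields
\[
\norm{S_{k,u} - S_{k,u_n}}{\calB(L^p)} \lesssim \norm{u-u_n}{L^2}\bigl(\norm{u}{L^2} + \norm{u_n}{L^2}\bigr) \longrightarrow 0.
\]
Since the compact operators are norm-closed in $\calB(L^p)$, compactness of $S_{k,u_n}$ transfers to $S_{k,u}$.

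The main difficulty will be the bookkeeping in the middle paragraph: one must arrange the three estimates from Lemma \ref{lemma:P1} so that the constants controlling the H\"older, $L^\infty$ and $L^1$ bounds on $\{f_j\}$ depend on $n$ and on the $L^p$-bound of $\{h_j\}$ but not on $j$ individually. Once these uniformities are in hand, the passage from local uniform convergence and tail decay to $L^p$-convergence is standard.
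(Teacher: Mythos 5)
Your proposal is correct and follows essentially the same route as the paper: first bound $S_{k,u}$ on $L^p$ with $\norm{S_{k,u}}{\calB(L^p)} \lesssim \norm{u}{L^2}^2$, then reduce to $u \in C_0^\infty(\bbC)$ by density, and finally prove compactness for smooth compactly supported $u$. The only difference is the middle step: the paper outsources the compactness of $W_{k,u}$ to the proof of Lemma~3.1 in \cite{Perry:2016}, whereas you supply a self-contained Arzel\`a--Ascoli argument (uniform H\"older modulus from \eqref{P2}, uniform $L^\infty$ bound from \eqref{P3}, $1/|z|$ tail decay from compact support), which is exactly the kind of argument the cited lemma makes; your version is a fine inlined substitute.
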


\begin{proof}
The equation \eqref{CGO.scal} is equivalent to 
$$ m_1 - 1 = S_{k,u} 1 - S_{k,u} (m_1-1). $$
It follows from Lemma \ref{lemma:P1} and its analogue for $\Cauchybar$ that $S_{k,u}$ is a 
bounded operator on $L^p$ provided $u \in L^2 \cap L^{2p/(p+2)}$. Moreover, since $S_{k,u}$ is bilinear in $u$,
it is easy to see that the map $L^p \cap L^{2p/(p+2)} \ni u \mapsto S_{k,u} \in \calB(L^p)$ is continuous. Hence, 
to prove that $S_{k,u}$ is compact, it suffices to do so for $u \in C_0^\infty(\bbC)$ and appeal to density. We can
argue as in the first paragraph of \cite[proof of Lemma 3.1]{Perry:2016} that $W_{k,u}$ is compact, while $V_{k,u}$ is bounded by Lemma \ref{lemma:P1} again. Hence $S_{k,u}$ is compact.
\end{proof}

\begin{proof}[Proof of Proposition \ref{prop:symmetry}]
It follows from Lemmas \ref{lemma:CGO.sym} -- \ref{lemma:CGO.scal} and the 
Fredholm alternative that the problem \eqref{CGO.m} has a unique solution 
if and only if $\ker(I-S_{k,u})$ is trivial.
\end{proof}

\subsection{Renormalized Determinant}

We'll now define and study a renormalized determinant of $I-S_{k,u}$. In Proposition \ref{prop:brown}, it is shown that the operator $S_{k,u}$ belongs to the Miklhlin-Itskovich algebra $\calE_p$ provided $p>2$ and $u \in L^t(\bbC) \cap L^{t'}(\bbC)$ where
\begin{equation}
\label{tp}
\frac{1}{2}+\frac{1}{p} < \frac{1}{t}, \quad \frac{1}{p} + \frac{1}{t} > 1.
\end{equation}

\begin{definition}
\label{def:admissible}
We say that $(p,t)$ is an \emph{admissible pair}
if $p>2$, $t \in [1,2)$, and eqref{tp} holds.
\end{definition}

\begin{remark}
\label{rem:admissable}
The two constraints
\eqref{tp} together with $p>2$ and $t>1$ imply that $(1/p,1/t)$ belong to the interior of the triangle with vertices
$(0,1)$, $(1/2,1)$ and $(1/4,3/4)$ in the $(1/p,1/t)$-plane (see Figure \ref{fig:tp} in Appendix 
\ref{app:brown}). If $(p,t)$ is an admissable pair and $u \in L^{t}(\bbC) \cap L^{t'}(\bbC)$, it
is easy to see that $u \in L^{2p/(p+2)}$ since, by \eqref{tp}, the inequalities
$$ \frac{1}{t'} < \frac{1}{2} + \frac{1}{p} < \frac{1}{t} $$
hold.
\end{remark}

For an admissible pair $(p,t)$, the renormalized determinant of Theorem \ref{thm:GKK} 
\begin{equation}
\label{D}
 D(k,u) = \Det(I-S_{k,u}) 
 \end{equation}
is a well-defined, bounded continuous function of $(k,u)$ with $D(k,u) \rarr 1$ as $k \rarr \infty$ and $D(k,u)$ is continuous in $u \in L^{t}(\bbC) \cap L^{t'}(\bbC)$ uniformly in 
$k \in \bbC$.  

We will define the determinant in  Banach space of potentials large enough to include $C_0^\infty(\bbC)$ perturbations of the soliton solution \eqref{APP}, and sufficiently restrictive that the $\dbar$ equation stated in Theorem \ref{thm:dbar} holds.  For $\alpha \in (1/2,1)$ let 
$$X_\alpha= W^{1,2}(\bbC) \cap L^{2,\alpha}(\bbC)$$ 
where $W^{1,2}(\bbC)$ consists of $L^2$ functions with one weak derivative in $L^2$ and 
$$L^{2,\alpha}(\bbC) = \left\{ f \in L^2(\bbC): (1+|z|^2)^{\alpha/2} f \in L^2(\bbC) \right\}.$$
Note that $X_1$ is the space $H^{1,1}(\bbC)$ considered in \cite{Perry:2016}. We need $\alpha<1$ to include the soliton solution \eqref{APP}, but $\alpha>1/2$ for later estimates. It is easy to see that if $u \in X_\alpha$, then 
$u \in L^{q}(\bbC)$ for $q \in \left( \dfrac{2}{1+\alpha},\infty\right)$, so that
$u \in L^{t}(\bbC) \cap L^{t'}(\bbC)$ for $\dfrac{1}{1+\alpha} < t< 2$. To find admissible $(p,t)$ with $t \in (1,4/3)$, we require $\alpha > 1/2$. 

\medskip

We also note:

\begin{lemma}
\label{lemma:compact.embedding}
The inclusion $X_\alpha \rarr L^2(\bbC)$ is a compact embedding.
\end{lemma}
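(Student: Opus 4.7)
The plan is to combine local compactness coming from the $W^{1,2}$ part of the norm with uniform decay at infinity coming from the weighted $L^{2,\alpha}$ part. Throughout, let $\{u_n\}$ be a bounded sequence in $X_\alpha$, and set $C = \sup_n \norm{u_n}{X_\alpha}$.

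First I would extract a subsequence converging locally. For each $R > 0$, the restriction map $X_\alpha \to W^{1,2}(B_R)$ is continuous, so $\{u_n|_{B_R}\}$ is bounded in $W^{1,2}(B_R)$. By the classical Rellich-Kondrachov theorem on a bounded smooth domain, $W^{1,2}(B_R)$ embeds compactly into $L^2(B_R)$. A standard diagonal argument (applied to the sequence of balls $B_N$ for $N \in \bbN$) then produces a subsequence, still denoted $\{u_n\}$, and a function $u \in L^2_{\loc}(\bbC)$ such that $u_n \to u$ in $L^2(B_R)$ for every $R > 0$.

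Next I would use the weighted bound to control the tail uniformly in $n$. For every $R > 0$,
\begin{equation*}
\int_{|z|>R} |u_n(z)|^2 \, dm(z) \leq (1+R^2)^{-\alpha} \int_{|z|>R} (1+|z|^2)^{\alpha} |u_n(z)|^2 \, dm(z) \leq C^2 (1+R^2)^{-\alpha},
\end{equation*}
which tends to $0$ as $R \to \infty$ uniformly in $n$. By Fatou's lemma the same tail bound holds for $u$, so in particular $u \in L^2(\bbC)$.

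Finally I would combine the two to deduce $L^2(\bbC)$-convergence. Given $\eps > 0$, pick $R$ so large that $2C^2(1+R^2)^{-\alpha} < \eps^2/2$; then for all $n,m$,
\begin{equation*}
\norm{u_n - u_m}{L^2(\bbC)}^2 \leq \norm{u_n - u_m}{L^2(B_R)}^2 + 2\int_{|z|>R}(|u_n|^2 + |u_m|^2)\,dm(z) < \norm{u_n - u_m}{L^2(B_R)}^2 + \eps^2/2,
\end{equation*}
and the first term is less than $\eps^2/2$ for $n,m$ large by the local convergence. Hence $\{u_n\}$ is Cauchy in $L^2(\bbC)$, so it converges to $u$ in $L^2(\bbC)$, proving compactness of the embedding. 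There is no real obstacle: the only thing to verify is that $\alpha > 0$ suffices for the uniform tail bound, and for that any positive weight works; the constraint $\alpha > 1/2$ is only needed for the other uses of $X_\alpha$ in the paper.
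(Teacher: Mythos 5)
Your proof is correct and is essentially the same as the paper's: both combine the Rellich--Kondrachov compactness of $W^{1,2}\hookrightarrow L^2$ on bounded sets with the uniform tail bound coming from the $L^{2,\alpha}$ weight. The paper phrases this as approximating the embedding by the compact cutoff operators $f\mapsto\chi_R f$ in operator norm, whereas you run the equivalent sequential/diagonal argument; the content is the same.
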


\begin{proof}
For any bounded set $\Omega \subset \bbC$ with smooth boundary, the compact embedding $W^{1,2}(\Omega) \hookrightarrow L^2(\Omega)$ holds. Let $\chi \in C_0^\infty(\bbC)$ with $\chi(z) = 1 \text{ if } |z| \leq 1$ and $\chi(z) = 0$ if $|z| \geq 2$. Let
$\chi_R(z) = \chi(z/R)$. For each $R>0$, the map $f \mapsto \chi_R f$ is compact from $W^{1,2}(\bbC)$ into $L^2(\Omega)$. For $u \in X_\alpha$, 
$ \norm{(1-\chi_R)u}{L^2(\bbC)} \leq R^{-\alpha} \norm{u}{L^{2,\alpha}(\bbC)}$ so
taking $R \rarr \infty$ we see that the embedding into $L^2(\bbC)$ is compact.
\end{proof}

From Proposition \ref{prop:brown} and the remarks above, we have:

\begin{proposition}
Suppose that $u \in X_\alpha$ for some $\alpha \in (1/2,1)$. Then the renormalized determinant
$$D(k,u) = \Det(I- S_{k,u})$$
is continuous in $k$ and $u$, and satisfies the asymptotic condition
$$ \lim_{|k| \rarr \infty} D(k,u) = 1.$$
\end{proposition}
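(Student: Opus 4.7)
The plan is to reduce the statement directly to Proposition \ref{prop:brown} by exhibiting, for each $\alpha \in (1/2,1)$, a continuous embedding $X_\alpha \hookrightarrow L^t(\bbC) \cap L^{t'}(\bbC)$ for some admissible pair $(p,t)$ in the sense of Definition \ref{def:admissible}. Granted such an embedding, the joint continuity of $(k,u) \mapsto D(k,u)$ on $\bbC \times X_\alpha$ and the asymptotic $D(k,u) \to 1$ as $|k| \to \infty$ follow by composing the embedding with the map $(k,u) \mapsto D(k,u)$ supplied by Proposition \ref{prop:brown}.

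To produce the admissible pair, I would invoke the $L^q$-embedding recorded in the discussion preceding Lemma \ref{lemma:compact.embedding}: every $u \in X_\alpha$ lies in $L^q(\bbC)$ for all $q \in (2/(1+\alpha), \infty)$, with the inclusion $X_\alpha \hookrightarrow L^q$ continuous by a weighted Cauchy--Schwarz argument splitting the integral over the unit disk and its complement. The hypothesis $\alpha > 1/2$ forces $2/(1+\alpha) < 4/3$, so I may fix $t \in (2/(1+\alpha), 4/3)$; then $t' > 4$ automatically exceeds $2/(1+\alpha)$, giving $X_\alpha \hookrightarrow L^t(\bbC) \cap L^{t'}(\bbC)$ continuously. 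Since the admissible triangle of Remark \ref{rem:admissable} has interior points with $1/t$-coordinate arbitrarily close to $3/4$ from above, I can then choose $p > 2$ with $(1/p, 1/t)$ in this interior, so that \eqref{tp} holds and $(p,t)$ is admissible.

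Finally, I would compose the embedding with Proposition \ref{prop:brown}. For the admissible pair just chosen, $S_{k,u}$ lies in the Mikhlin--Itskovich algebra $\calE_p$ for every $u \in X_\alpha$, and the renormalized determinant $D(k,u) = \Det(I - S_{k,u})$ is defined via Theorem \ref{thm:GKK}. The joint continuity on $\bbC \times X_\alpha$ and the large-$k$ limit then follow immediately from the corresponding statements of Proposition \ref{prop:brown} applied to the image of $X_\alpha$ in $L^t(\bbC) \cap L^{t'}(\bbC)$. The only (minor) obstacle is the index bookkeeping, and $\alpha > 1/2$ is precisely what is needed to fit $t$ into the admissible range while maintaining $X_\alpha \hookrightarrow L^t \cap L^{t'}$; all the analytic heavy lifting has been absorbed into Proposition \ref{prop:brown}.
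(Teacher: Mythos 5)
Your proposal is correct and takes essentially the same route as the paper: the paper states the proposition as an immediate consequence of Proposition \ref{prop:brown} and the preceding remarks, which observe exactly the embedding $X_\alpha \hookrightarrow L^q$ for $q \in (2/(1+\alpha),\infty)$ and note that $\alpha > 1/2$ is what allows one to pick $t \in (2/(1+\alpha),4/3)$ and hence an admissible pair $(p,t)$. You have merely filled in the index bookkeeping that the paper leaves implicit.
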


Clearly, $Z = \{ k \in \bbC: D(k)  = 0 \}$. As an immediate corollary, we have:

\begin{corollary}
\label{coro:exbd}
Suppose that $u \in X_\alpha$ for some $\alpha \in (1/2,1)$. Then the exceptional set $Z$ is closed and bounded.
\end{corollary}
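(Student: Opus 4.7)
The plan is to read off both conclusions directly from the preceding proposition, since the corollary is essentially a restatement of the continuity of $D(\dotarg,u)$ and its asymptotic behavior at infinity, combined with the identification $Z = \{k \in \bbC : D(k,u) = 0\}$ already noted above.

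First, for closedness: the preceding proposition guarantees that for $u \in X_\alpha$ with $\alpha \in (1/2,1)$, the function $k \mapsto D(k,u)$ is continuous on $\bbC$. Since $\{0\} \subset \bbC$ is closed, $Z = D(\dotarg,u)^{-1}(\{0\})$ is closed as the preimage of a closed set under a continuous function. This step is immediate.

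Next, for boundedness: the asymptotic relation $\lim_{|k| \rarr \infty} D(k,u) = 1$ from the same proposition implies that there exists $R > 0$ (depending on $u$) such that $|D(k,u) - 1| < 1/2$ whenever $|k| > R$. In particular $D(k,u) \neq 0$ for $|k| > R$, so $Z \subseteq \{ k \in \bbC : |k| \leq R \}$. Combined with closedness, $Z$ is thus a closed bounded subset of $\bbC$.

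There is no real obstacle here: the corollary is a formal consequence of the previous proposition together with the characterization of $Z$ as the zero set of the renormalized determinant (which in turn rests on Proposition \ref{prop:symmetry} identifying exceptional points with nontrivial kernels of $I - S_{k,u}$, and property (i) of the generalized determinant, namely that $\Det(I+A) \neq 0$ if and only if $I+A$ is invertible). The entire substance of the corollary has already been packaged into the continuity and asymptotic statements of the preceding proposition; writing the proof amounts to spelling out these two one-line observations.
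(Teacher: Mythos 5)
Your proposal is correct and follows exactly the argument the paper intends: the paper labels this as "an immediate corollary" of the preceding proposition (continuity of $D(\dotarg,u)$ and $\lim_{|k|\rarr\infty} D(k,u)=1$) together with the identification $Z = \{k : D(k,u)=0\}$. Your spelled-out version of the two one-line observations matches the paper's (unwritten) reasoning precisely.
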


\begin{remark}
By Remark \ref{rem:Det.common}, if $S_{k,u}$ belongs to $\calE_p$ and $\calE_{q}$ for
distinct $p$ and $q$, the determinants in $\calE_p$ and $\calE_q$ coincide. 
\end{remark}

\subsection{A $\dbar$-Equation for the Determinant}
We will now derive a $\dbar$ equation for $D(k)$ in terms of scattering data for $u$, defined as follows. We first compute for $u \in C_0^\infty(\bbC)$ and then use continuity to pass to $u \in X_\alpha$. 

For $\delta >0$, let 
$$ \Omega_\delta = \left\{ k \in \bbC: \dist(k,Z) > \delta \right\}. $$
By Corollary \ref{coro:exbd}, $\Omega_\delta$ is an unbounded open set that contains a neighborhood of infinity.
On this set, the solution of \eqref{CGO.m.int} is unique, and we define scattering data $r$ and $s$, functions of
$k \in \Omega_\delta$, by the asymptotic formulas
\begin{align}
\label{data.s.lim}
s(k) &= 2  \lim_{|z| \rarr \infty} z \left(m_1(z,k)-1 \right) \\
\label{data.r.lim}
r(k) &= -2  \lim_{|z| \rarr \infty} \left(e_{-k}(z) z \overline{m_2(z,k)} \right)
\end{align}
The existence of these limits is a simple consequence of the formula
\begin{equation}
\label{Cauchy.lim}
\lim_{|z| \rarr \infty} z \, \Cauchy\left[ f \right](z) = \frac{1}{\pi} \int f(z) \, dm(z) 
\end{equation}
valid if $f \in L^1(\bbC) \cap L^p(\bbC)$ for some $p> 2$ (see \eqref{P3}). From \eqref{CGO.m.int} and \eqref{Cauchy.lim}, we deduce that
\begin{align}
\label{data.s.int}
s(k)	&=	\frac{1}{\pi} \int u(z) m_2(z,k) \, dm(z) 	\\
\label{data.rint}
r(k)	&=	-\frac{1}{\pi} \int e_{-k}(z) u(z) \overline{m_1(z,k)} \, dm(z)
\end{align}

In this section, we will prove:

\begin{theorem}
\label{thm:dbar}
Suppose that $u \in X_\alpha$ and $\delta>0$.
Then $D(k)$ defined by \eqref{D} obeys the $\dbar$-equation
\begin{equation}
\label{D.dbar}
\dbar \log D(k) = \frac{i}{2}\overline{s(k)}-c(k)
\end{equation}
for all $k \in \Omega_\delta$, where
\begin{equation}
\label{c.bis}
c(k) = -\frac{i}{4\pi^2} \iint \frac{e_{-k}(w)u(w) e_k(z) \overline{u(z)}}{z-w} \, dm(w) \, dm(z).
\end{equation}
\end{theorem}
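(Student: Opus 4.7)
The plan is to exploit the very specific $k$-dependence of $S_{k,u}$ through the exponentials $e_k, e_{-k}$. Differentiation in $\kbar$ reveals $\dbar_k S_{k,u}$ to be a rank-one operator, so the two traces appearing in the logarithmic-derivative formula for $\log D(k) = \log\Det(I - S_{k,u})$ reduce to explicit, absolutely convergent integrals involving the CGO solution. By continuity of $D(k,\cdot)$ and of the scattering data on $X_\alpha$ together with the density of $C_0^\infty(\bbC)$ in $X_\alpha$, it suffices to establish \eqref{D.dbar} for $u \in C_0^\infty(\bbC)$; there, the GGK logarithmic derivative formula reads
$$\dbar_k \log D(k) = -\Tr\bigl((I-S_{k,u})^{-1}\dbar_k S_{k,u}\bigr) - \Tr(\dbar_k S_{k,u}).$$

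The key computation is $\dbar_k S_{k,u}$. Since $\dbar_k e_{\pm k}(z) = \pm i\zbar\, e_{\pm k}(z)$, the product rule gives
$$\dbar_k S_{k,u}\, h = -\tfrac{i}{4}\Cauchy\Bigl(u e_{-k}\bigl[\zbar\, \Cauchybar(e_k\ubar h) - \Cauchybar(\bar\zeta\, e_k\ubar h)\bigr]\Bigr),$$
where inside $\Cauchybar$ the symbol $\bar\zeta$ denotes the integration variable. The elementary commutator identity
$$\zbar\,\Cauchybar[f](z) - \Cauchybar[\bar\zeta\, f](z) = \tfrac{1}{\pi}\int f\,dm$$
(immediate from $(\zbar-\zetabar)/(\zbar-\zetabar)=1$) reduces the bracket to the constant $\tfrac{1}{\pi}\int e_k \ubar h\,dm$, yielding the rank-one form
$$\dbar_k S_{k,u}\, h \;=\; \phi_k\int\psi_k\, h\, dm,\qquad \phi_k = -\tfrac{i}{4\pi}\Cauchy[ue_{-k}],\quad \psi_k = e_k\ubar,$$
for which $\Tr\bigl(A(\phi_k\otimes\psi_k)\bigr) = \int(A\phi_k)\psi_k\,dm$ for any bounded $A$.

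The first trace unfolds by Fubini directly into the double integral defining $c(k)$ in \eqref{c.bis}. For the second, the identification $(I-S_{k,u})^{-1}\Cauchy[ue_{-k}] = 2\,e_{-k}\overline{m_2}$ can be checked directly: the conjugate of \eqref{CGO.m.int2} (using $\overline{\Cauchybar[f]} = \Cauchy[\bar f]$) reads $\overline{m_2} = -\tfrac{1}{2}e_k\Cauchy[e_{-k}u\overline{m_1}]$, and combined with $\Cauchy[um_2] = 2(m_1 - 1)$ from \eqref{CGO.m.int1}, substitution confirms that $(I-S_{k,u})$ maps $2e_{-k}\overline{m_2}$ onto $\Cauchy[ue_{-k}]$. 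Consequently
$$\Tr\bigl((I-S_{k,u})^{-1}\dbar_k S_{k,u}\bigr) \;=\; -\tfrac{i}{2\pi}\int \ubar\,\overline{m_2}\,dm \;=\; -\tfrac{i}{2}\,\overline{s(k)}$$
by the defining formula \eqref{data.s.int}. Substituting both traces into the derivative formula gives \eqref{D.dbar}.

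The main obstacle is the rigorous justification of the GGK logarithmic-derivative formula in the Mikhlin-Itskovich algebra $\calE_p$ for the family $k \mapsto S_{k,u}$. Because $\dbar_k S_{k,u}$ is actually finite-rank and all integrals above are absolutely convergent when $u \in C_0^\infty$, this can be handled by approximating $S_{k,u}$ by finite-rank operators in the $\calE_p$-norm (where the classical Fredholm derivative formula applies verbatim) and passing to the limit using the continuity of $\Det$, or alternatively by invoking the general trace-derivative identity from \cite{GGK:1997, GGK:2000}.
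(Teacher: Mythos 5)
Your approach is essentially the same as the paper's: establish \eqref{D.dbar} first for $u \in C_0^\infty(\bbC)$ by showing $\dbar_k S_{k,u}$ is rank-one, apply the log-derivative formula for $\Det$, unfold the traces, and then pass to $u \in X_\alpha$ by density. Your commutator reduction $\zbar\,\Cauchybar[f] - \Cauchybar[\zetabar f] = \tfrac{1}{\pi}\int f\,dm$ is a clean way to exhibit the rank-one structure (the paper uses the equivalent operator identity $[\,\zbar,\dee^{-1}]$). I also note that your identification of the resolvent applied to $\phi_k$ with $e_{-k}\overline{m_2}$ (rather than the paper's $m_2$) is the function that actually pairs against $\psi_k = e_k\ubar$ to give $\overline{s(k)}$, so that part of the structure is more transparent than what is printed.

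However, there are two compensating sign slips that make the intermediate steps unreliable. First, Lemma \ref{lemma:Det.dot} with $A(t) = -S_{k,u}$ gives $\dbar_k \log \Det(I - S_{k,u}) = -\Tr\bigl((I-S_{k,u})^{-1}\dbar_k S_{k,u}\bigr) + \Tr(\dbar_k S_{k,u})$; you wrote $-\Tr(\dbar_k S_{k,u})$. Second, the identity you state, $(I - S_{k,u})^{-1}\Cauchy[ue_{-k}] = 2e_{-k}\overline{m_2}$, does \emph{not} follow by the substitution you describe. Using $S_{k,u} = W_{k,u} V_{k,u}$ and the iteration identity $(I + S_{k,u})^{-1} W_{k,u} = W_{k,u}(I + V_{k,u} W_{k,u})^{-1}$ together with $(I + V_{k,u}W_{k,u})\overline{m_1} = 1$, one obtains $(I + S_{k,u})^{-1}\Cauchy[ue_{-k}] = -2 e_{-k}\overline{m_2}$; note the sign \emph{and} the opposite sign on $S_{k,u}$. (Indeed, carrying out your "substitution" literally yields $(I - S_{k,u})(2e_{-k}\overline{m_2}) = -2\Cauchy(ue_{-k}\overline{m_1}) + \Cauchy(ue_{-k})$, which is not $\Cauchy[ue_{-k}]$.) These two sign slips cancel in your final line, so you land on \eqref{D.dbar}, but a reader who carefully checks either step in isolation will conclude the proof is wrong.

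Finally, you dismiss the passage from $C_0^\infty$ to $X_\alpha$ in a sentence. This is the bulk of the work in the paper: Proposition~\ref{prop:todo} reduces the task to showing that $m_2(\dotarg,\diamond;u_n) \to m_2(\dotarg,\diamond;u)$ in $L^\infty(\Omega_\delta;L^p)$ and that the exceptional sets stay away from $\Omega_\delta$, and Lemmas \ref{lemma:Z}, \ref{lemma:UV}, and \ref{lemma:R.bd} establish the uniform-in-$k$ resolvent bounds and continuity needed for this. Appealing to "continuity of the scattering data" is circular unless you first prove those bounds, since continuity of $s(k;u)$ in $u$ is exactly what is at stake. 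You flag the GGK derivative formula as the main obstacle, but that is the easy part (Lemma~\ref{lemma:Det.dot} handles it); the resolvent estimates are the serious technical content you would need to supply.
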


\begin{remark}
\label{rem:c}
Differentiating \eqref{c.bis} with respect to $k$ and using the analogue of Lemma \ref{lemma:P2} for the $\dee$-operator, we conclude that
$$ c(k) = \frac{1}{4\pi} \int \frac{1}{\kbar - \zetabar} \left| \left(\calF u\right)(-\zeta) \right|^2 \, dm(\zeta). $$
\end{remark}

We begin by considering $u \in C_0^\infty(\bbC)$.

\begin{proposition}
\label{prop:D.dbar}
Suppose that $u \in C_0^\infty(\bbC)$ and $\delta>0$.
Then, the conclusion of Theorem \ref{thm:dbar} holds.
\end{proposition}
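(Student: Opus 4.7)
The plan is to compute $\dbar_k \log D(k)$ via the logarithmic derivative of the renormalized determinant, exploiting the crucial observation that $\dbar_k S_{k,u}$ is a rank-one operator. Since $e_{\pm k}(z)=\exp(\pm i(kz+\kbar\zbar))$ gives $\dbar_k e_{\pm k}=\pm i\zbar\,e_{\pm k}$, differentiating \eqref{SKU} in $\kbar$ introduces a $\zbar$ factor both inside and outside $\Cauchybar$, and the two contributions combine into the commutator
\[
[\Cauchybar,\zbar]\,f \,=\, -\tfrac{1}{\pi}\int_{\bbC} f\,dm,
\]
which is immediate from the kernel of $\Cauchybar$ and which is a rank-one operator. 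Consequently
\[
\dbar_k S_{k,u} \,=\, \psi_k \otimes \phi_k, \qquad \psi_k \,\propto\, \Cauchy[u\,e_{-k}], \quad \phi_k \,=\, e_k\,\ubar.
\]

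Applying the differentiation formula for $\Det$ from Appendix \ref{sec:GGK} on $\Omega_\delta$, one obtains a relation of the form
\[
\dbar_k \log D(k) \,=\, -\Tr\!\bigl[(I - S_{k,u})^{-1}\dbar_k S_{k,u}\bigr] \,+\, (\text{renormalization trace}),
\]
where the renormalization trace is a scalar multiple of $\Tr[\dbar_k S_{k,u}]$ arising from property (ii) of the generalized determinant. The rank-one structure collapses both traces to scalar pairings against $\phi_k$. The renormalization piece $\langle\phi_k,\psi_k\rangle$ unfolds via Fubini into exactly the double integral \eqref{c.bis} defining $-c(k)$.

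For the resolvent trace, let $\tilde m_1 := (I - S_{k,u})^{-1}\psi_k$ and $\tilde m_2 := -\tfrac{1}{2}\,e_{-k}\Cauchybar[e_k\,\ubar\,\tilde m_1]$. Then $(\tilde m_1, \tilde m_2)$ solves the CGO system \eqref{CGO.m.sym} with the additional forcing term $\tfrac{i}{4\pi}u\,e_{-k}$ in the $\dbar \tilde m_1$ equation. To compute $\langle\phi_k,\tilde m_1\rangle = \int e_k\,\ubar\,\tilde m_1\,dm$, I pair the modified system against the original CGO solution $(m_1, m_2)$ through a Green-type identity---for instance, integrating $\dee(e_k\,\tilde m_2\,\overline{m_1})$ over $\bbC$---chosen so that the oscillatory phases combine into $e_k\,\tilde m_2 = -\tfrac{1}{2}\Cauchybar[e_k\,\ubar\,\tilde m_1]$ with genuine $\zbar^{-1}$-asymptotics. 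On the bulk side the product rule cancels the $ik$-contributions via $\dee e_k = ik\,e_k$; on the boundary side the coefficient of $\zbar^{-1}$ extracted from $z\,\Cauchy[f](z)\to\tfrac{1}{\pi}\int f\,dm$ yields an algebraic identity for $\int e_k\,\ubar\,\tilde m_1\,dm$ which, combined with the analogue from $\dbar(e_{-k}\,\tilde m_1\,\overline{m_2})$ and the scattering formulas \eqref{data.s.int}--\eqref{data.rint}, produces $\tfrac{i}{2}\overline{s(k)}$. The main obstacle is exactly this last step: because $e_{\pm k}$ oscillate at infinity, the bilinear pairing must be arranged so every boundary term is an honest complex coefficient rather than an oscillatory limit, and the bookkeeping between the two systems must isolate $\overline{s(k)}$ rather than $r(k)$ or some other scattering combination.
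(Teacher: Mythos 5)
Your outline correctly identifies the two pillars of the argument: $\dbar_k S_{k,u}$ is a rank-one operator (via the commutator $[\Cauchybar,\zbar]$, which projects onto constants), and $\dbar_k\log\Det(I-S_{k,u})$ splits via Lemma \ref{lemma:Det.dot} into a resolvent trace and a renormalization trace. Your handling of the renormalization trace is also correct: unfolding $\Tr[\dbar_k S_{k,u}]$ via Fubini gives exactly the double integral $c(k)$ of \eqref{c.bis}. This part coincides with the paper.

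The gap is in the resolvent trace, and it is the step you yourself flag as ``the main obstacle.'' You introduce $\tilde m_1 := (I-S_{k,u})^{-1}\psi_k$ with $\psi_k\propto\Cauchy[u e_{-k}]$, observe that $(\tilde m_1,\tilde m_2)$ satisfies a forced CGO system, and then propose a Green-type identity---integrating $\dee(e_k\tilde m_2\overline{m_1})$ and $\dbar(e_{-k}\tilde m_1\overline{m_2})$ over $\bbC$ and extracting oscillatory boundary coefficients---to produce $\tfrac{i}{2}\overline{s(k)}$. This machinery is unnecessary and you have not carried it through. The paper's proof shortcuts it entirely: the forcing term $\psi_k\propto\Cauchy[u e_{-k}]=W_{k,u}1$ is, up to a constant, the very inhomogeneity that appears in the iterated integral equation for the CGO system, so $(I-S_{k,u})^{-1}\psi_k$ is \emph{algebraically identified} with $m_2(\cdot,k)$ with no integration by parts at all. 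The trace then collapses to $\int e_k\,\ubar\,m_2\,dm$, which is read off against the scattering-data formula \eqref{data.s.int} to give $\tfrac{i}{2}\overline{s(k)}$. In other words, you set up the right auxiliary object $\tilde m_1$ but did not notice that the iterated integral equations already give you $\tilde m_1=m_2$ (up to a normalizing constant), making the entire Green's-identity/boundary-term analysis---together with its genuine difficulty about oscillatory limits---superfluous. As written, the proposal does not constitute a complete proof; the resolvent-trace step would need either the paper's direct resolvent identification or a full working-out of the Green's identity you only sketch.
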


\begin{proof}
From
\[
S_{k,u}=\frac{1}{4}\dbar^{-1}
	\left(  
		e_{-k}u\partial^{-1}
			\left(  
				e_{k}\overline{u}~\cdot~
			\right)  
	\right)
\]
we easily compute that
\[
\left[  \dbar_{k},S_{k,u}\right]  =-\frac{1}{4}\overline
{\partial}^{-1}\left(  e_{-k}u~i\left[  \overline{z},\partial^{-1}\right]
~\left(  e_{k}\overline{u}~\cdot\right)  \right)
\]
which is a rank-one integral operator with integral kernel%
\begin{equation}
K(z,z^{\prime};k)=-\frac{i}{4\pi^{2}}\left[  \int\frac{1}{z-w}e_{-k}%
(w)u(w)~dm(w)\right]  ~e_{k}(z^{\prime})\overline{u}(z^{\prime}).~
\label{eq:K}%
\end{equation}
This rank-one operator belongs to $\calE _{p}$ provided $u\in
L^{p^{\prime}}\cap L^{2p/(p+2)}$

We apply Lemma \ref{lemma:Det.dot}. Compute
\[
\dbar_{k}\log\Det\left(  I- S_{k,u}\right)  =\operatorname*{Tr}\left(  \left(  -1 \right)  \left(
I- S_{k,u}\right)  ^{-1}\dbar_{k}S_{k,u}\right)
+\operatorname*{Tr}\left(  \dbar_{k}S_{k,u}\right)  .
\]
Each of these terms is the trace of a rank-one operator. The first has
integral kernel
\[
-\frac{1}{2\pi}\left[  \left(  I- S_{k,u}\right)  ^{-1}\left(
\frac{i}{2}\dbar^{-1}\left(  e_{-k}u\right)  \right)  \right]
\left(  z\right)  e_{k}(z^{\prime})\overline{u}(z^{\prime})=-\frac{i }{2\pi}%
m_{2}(z,k)e_{k}(z^{\prime})\overline{u(z^{\prime})}%
\]
where we used the fact that
\begin{align*}
\left(  I- S_{k,u}\right)  ^{-1}\left(  \frac{1}{2}%
\dbar^{-1}\left(  e_{k}u\right)  \right)   &  = \left(  I-
S_{k,u}\right)  ^{-1}T_{k}1\\
&  = m_{2}.
\end{align*}
The second term has integral kernel $ K$ where $K$ is given by
(\ref{eq:K}) so that, altogether,%
\begin{align*}
(-1)\Tr
	\left(  
			\left(  I- S_{k,u}\right) ^{-1}
			\dbar_{k}S_{k,u}
	\right)
		&  
+  \Tr
	\left(
			\dbar_{k}S_{k,u}
	\right)   \\
&	 =\frac{i}{2\pi}
	\int
			e_{k}(z)\overline{u(z)}m_{2}(z,k)
	\,dm(z)\\
&  \quad +\frac{i}{4\pi^{2}}
	\int\int
			\frac{e_{-k}(w)u(w)e_{k}(z)
			\overline{u(z)}}{z-w}
	~dm(w)~dm(z)~
\end{align*}
which gives the claimed formula by (\ref{data.s.int}). 
\end{proof}

Now we would like to prove that $D(k)$ solves the same $\dbar$-equation weakly if $u \in X_\alpha$ for some $\alpha \in (1/2,1)$. The following proposition gives a ``to-do list.''

\begin{proposition}
\label{prop:todo}
Fix $\alpha \in (1/2,1)$ and $\delta>0$. Suppose that the map $u \mapsto m_2(\dotarg,k;u)$ defined by 
\eqref{CGO.scal}--\eqref{CGO.scal.m2} has the following property: if $u \in X_\alpha$ 
and $\{ u_n \}$ is a sequence from $C_0^\infty(\bbC)$ converging to $u$,  
and if $Z_n$ (resp. $Z$) denotes the exceptional set of $u_n$ (resp. $u$), then
\begin{itemize}
\item[(i)]		For the given $\delta>0$ and all $n$ sufficiently large depending on $\delta$, the condition $\dist(k,Z) > 2\delta$ implies that $\dist(k,Z_n)>\delta$, and
\item[(ii)]	$m_2(\dotarg, \diamond ;u_n) \rarr m_2(\dotarg,\diamond,u)$ in $L^\infty(\Omega_\delta ; L^p(\bbC))$.
\end{itemize}
Then  \eqref{D.dbar} holds weakly on $\Omega_\delta$.
\end{proposition}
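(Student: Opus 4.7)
The plan is an approximation argument: since $C_0^\infty(\bbC)$ is dense in $X_\alpha$, I would choose $u_n \in C_0^\infty(\bbC)$ with $u_n \to u$ in $X_\alpha$, apply Proposition~\ref{prop:D.dbar} to each $u_n$, and pass to the limit $n \to \infty$ in the weak formulation of \eqref{D.dbar}. Hypotheses~(i) and (ii), together with the continuity of $D(k,u)$ in $u$ uniformly in $k$ noted in the paragraph preceding Corollary~\ref{coro:exbd}, provide all three ingredients required.

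To set up, fix $\phi \in C_0^\infty(\Omega_\delta)$. By compactness of $\mathrm{supp}(\phi)$ and its positive distance from $Z$, there exists $\delta' > \delta$ such that $\dist(k, Z) > 2\delta'$ for every $k \in \mathrm{supp}(\phi)$. Applying hypothesis~(i) with $\delta'$ in place of $\delta$, we see that for all $n$ sufficiently large, $\dist(k, Z_n) > \delta'$ on $\mathrm{supp}(\phi)$, so in particular $D(k, u_n) \neq 0$ there. Proposition~\ref{prop:D.dbar} yields the classical identity \eqref{D.dbar} for each $u_n$, and integration by parts gives
\begin{equation*}
-\int_\bbC \log D(k, u_n)\, \dbar \phi(k)\, dm(k) = \int_\bbC \left( \frac{i}{2} \overline{s(k, u_n)} - c(k, u_n) \right) \phi(k) \, dm(k).
\end{equation*}

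The bulk of the work is passing to the limit in each term. First, $D(k, u_n) \to D(k, u)$ uniformly in $k$ by the continuity of the determinant in $u \in X_\alpha$; since $D(\cdot, u)$ is continuous and bounded below in modulus on the compact set $\mathrm{supp}(\phi)$, a single branch of $\log$ serves for all large $n$ and we obtain $\log D(k, u_n) \to \log D(k, u)$ uniformly on $\mathrm{supp}(\phi)$. Second, using \eqref{data.s.int}, split
\begin{equation*}
s(k, u_n) - s(k, u) = \frac{1}{\pi}\int (u_n - u)\, m_2(\cdot, k, u_n)\, dm + \frac{1}{\pi}\int u\, \bigl(m_2(\cdot, k, u_n) - m_2(\cdot, k, u)\bigr)\, dm;
\end{equation*}
the first piece tends to $0$ uniformly in $k \in \mathrm{supp}(\phi)$ by H\"{o}lder, $u_n \to u$ in $L^{p'}(\bbC)$ via the embedding $X_\alpha \hookrightarrow L^{p'}$ for admissible $p > 2$, and the uniform $L^p$-bound on $m_2(\cdot, k, u_n)$ supplied by~(ii); the second tends to $0$ uniformly directly by~(ii) paired with $u \in L^{p'}$. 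Third, $c(k, u_n) \to c(k, u)$ uniformly on $\mathrm{supp}(\phi)$ by the bilinear structure of \eqref{c.bis}, the estimate \eqref{P1} on $\Cauchy$, and convergence of $u_n$ in the two relevant $L^q$ spaces, all of which are controlled by $X_\alpha$ for $\alpha > 1/2$.

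The principal bookkeeping obstacle I foresee is the coherent choice of a branch of $\log D$ on a neighborhood of $\mathrm{supp}(\phi)$ valid for all $D(\cdot, u_n)$ with $n$ large, but once the uniform lower bound $|D(k, u)| \geq c_0 > 0$ on $\mathrm{supp}(\phi)$ is secured from continuity and compactness, this reduces to a standard covering argument. The real content of the statement has been shifted into its hypotheses~(i) and (ii), whose verification for the concrete perturbation $u_\eps = u_0 + \eps \varphi$ is the substantive work deferred to the next section; once those convergences are in hand, passing $n \to \infty$ in the displayed identity above gives \eqref{D.dbar} weakly on $\Omega_\delta$.
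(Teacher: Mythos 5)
Your proposal is correct and takes essentially the same route as the paper: pass to the limit $n\to\infty$ in the weak formulation of \eqref{D.dbar} for $u_n$, using the same splitting of $s_n - s$ via H\"older and hypothesis (ii), the same bilinearity argument for $c_n - c$, and the uniform-in-$k$ continuity of $D(k,u)$ in $u$; you are if anything more explicit than the paper about the branch-of-$\log$ bookkeeping, which the paper elides. One small slip to fix: the requirement $\delta' > \delta$ is not satisfiable in general, since $\mathrm{supp}(\phi)\subset\Omega_\delta$ only guarantees $\dist(\cdot,Z)>\delta$ there, not $>2\delta$; what you actually want is any $\delta'$ with $0<2\delta'<\inf_{\mathrm{supp}(\phi)}\dist(\cdot,Z)$, which exists by compactness, and hypothesis (i) (as supplied by Lemma \ref{lemma:Z}, which holds for every positive $\delta$) then keeps $Z_n$ away from $\mathrm{supp}(\phi)$ for large $n$.
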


\begin{proof}
Assuming the continuity, let $u \in X_\alpha$ and let $\{ u_n \}$ be a sequence from $C_0^\infty(\bbC)$ 
with $u_n \rarr u$ in $X_\alpha$.  Since $\alpha > 1/2$, $u \in L^t \cap L^{t'}$ for some $t \in (1,4/3)$ 
and there is a $p$ with $\widetilde{t} < p < t'$ so that $(p,t)$ is an admissible pair of exponents. 
A short computation shows that $p' \in (t,t')$ so that, also, $u \in L^{p'}(\bbC)$ with norm bounded by 
$\norm{u}{X_\alpha}$. Denoting by $s_n$ the scattering data corresponding to $u_n$, 
we have from \eqref{data.s.int}
that
\begin{align*}
\norm{s_n - s}{L^\infty(\Omega_\delta)} 
	&\leq \norm{u-u_n}{p'} \norm{m_2(\dotarg,\diamond;u_n)}{L^\infty(\Omega_\delta;L^p(\bbC))} \\
	&\quad
		+ \norm{u_n}{p'}\norm{m_2(\dotarg,\diamond;u_n)-m_2(\dotarg,\diamond;u)}{L^\infty(\Omega_\delta;L^p(\bbC))}
\end{align*}
so that, if the hypothesis holds, $s_n \rarr s$ in $L^\infty(\Omega_\delta)$ as $n \rarr \infty$. 

Next, let 
$$ c_n = -\frac{i}{4\pi^2} \int \, \int \frac{e_{-k}(w) u_n(w) e_k(z) \overline{u_n(z)}}{z-w} \, dm(w) \, dm(z). $$
From \eqref{P1} we easily see that
$$ |c_n| \lesssim \norm{u_n}{p} \norm{u_n}{2p/(p+2)} $$
so by bilinearity
$$ |c_n - c|  \lesssim \norm{u_n - u}{p} \norm{u_n}{2p/(p+2)} + \norm{u}{p} \norm{u_n - u}{2p/(p+2)}. $$
The $X_\alpha$ norm dominates the $L^p$ and $L^{2p/(p+2)}$ norms so that
$\norm{c_n-c}{L^\infty(\bbC} \rarr 0$ as $n \rarr \infty$.

Finally, let $\varphi \in C_0^\infty(\Omega_\delta)$. For sufficiently large $n$, $\Det(k;u_n)$ is defined for all $k \in \Omega_\delta$ and we may compute
\begin{align*}
\int_{\Omega_\delta} &
		\left[ 
			(-\dbar \varphi) \log D(k,u) 
				- \varphi 
					\left( 
						\frac{i}{2}\overline{s(k)} - c(k)
		 			\right) \
		 \right] \, dm(k)\\
&=	
\lim_{n \rarr \infty} 
\int_{\Omega_\delta} 
		\left[ 
			(-\dbar \varphi) D(k,u_n) 
				- \varphi 
					\left( 
						\frac{i}{2}\overline{s_n(k)} - c_n(k)
		 			\right) \
		 \right] \, dm(k)\\
&=
\lim_{n \rarr \infty} 
\int_{\Omega_\delta} 
		\varphi 
		\left[ 
			\dbar \log D(k,u_n) 
				- 
					\left( 
						\frac{i}{2}\overline{s_n(k)} - c_n(k)
		 			\right) \
		 \right] \, dm(k)\\
&=0.
\end{align*}

\end{proof}

It remains to show that the hypothesis of Proposition \ref{prop:todo} holds. First:

\begin{lemma}
\label{lemma:Z}
Fix $\delta>0$. Suppose that $u \in X_\alpha$ for some $\alpha \in (1/2,1)$, and 
that  $\{ u_n \}$ is a sequence from $X_\alpha$ with $u_n \rarr u$. Finally, let $Z_n$ and $Z$
be the respective exceptional sets for $u_n$ and $u$. There is an $N$ 
so that for any $n>N$, $\dist(z,Z_n) > \delta$ provided $\dist(z,Z)>2\delta$.
\end{lemma}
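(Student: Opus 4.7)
The plan is to exploit the characterization $Z = \{k : D(k,u) = 0\}$ together with the continuity of $D(k,u)$ in both arguments and its asymptotic behavior at infinity to show that the zero set $Z_n$ must cluster near $Z$ in a quantitative way.

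First, I would fix an admissible pair $(p,t)$ with $t \in \left(\tfrac{1}{1+\alpha},\tfrac{4}{3}\right)$, so that (as noted just before Lemma \ref{lemma:compact.embedding}) the embedding $X_\alpha \hookrightarrow L^{t}(\bbC) \cap L^{t'}(\bbC)$ is continuous. Convergence $u_n \to u$ in $X_\alpha$ thus implies convergence in $L^t \cap L^{t'}$, which is the function-space setting in which the renormalized determinant was shown to be continuous in $u$ uniformly in $k$. By Corollary \ref{coro:exbd}, $Z$ is compact, so the closed set
$$ A_\delta := \{ k \in \bbC : \dist(k,Z) \geq \delta \} $$
is well defined and, by definition of $Z$, $D(\cdot,u)$ does not vanish on $A_\delta$.

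Next, I would show the quantitative lower bound $\eta := \inf_{k \in A_\delta} |D(k,u)| > 0$. Since $D(k,u) \to 1$ as $|k|\to\infty$, choose $R$ so large that $|D(k,u)| \geq 1/2$ for $|k| > R$. On the compact set $A_\delta \cap \overline{B(0,R)}$ the continuous function $|D(\cdot,u)|$ attains a strictly positive minimum $\eta_1$, so $\eta \geq \min(\eta_1, 1/2) > 0$. Using the uniform-in-$k$ continuity of $u \mapsto D(\cdot,u)$, choose $N$ so large that
$$ \sup_{k \in \bbC} |D(k,u_n) - D(k,u)| < \eta/2 \quad \text{for all } n > N. $$
Then $|D(k,u_n)| > \eta/2 > 0$ for all $k \in A_\delta$ and $n > N$, so $Z_n \subset \bbC \setminus A_\delta = \{k : \dist(k,Z) < \delta\}$.

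A triangle-inequality argument then finishes the proof: if $\dist(z,Z) > 2\delta$ and $z' \in Z_n$, then $\dist(z',Z) < \delta$ forces $|z - z'| \geq \dist(z,Z) - \dist(z',Z) > 2\delta - \delta = \delta$, so $\dist(z, Z_n) > \delta$, as required. There is no serious obstacle here; the only inputs are the compactness of $Z$, the asymptotic normalization $D \to 1$ at infinity, and the uniform-in-$k$ continuity of $D$ in the potential, all of which have been established earlier in \S \ref{sec:direct}. The mild technical check is selecting an admissible pair compatible with the $X_\alpha$ embedding, which is immediate from the range $\alpha \in (1/2,1)$.
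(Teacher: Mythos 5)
Your proof is correct and follows the same overall strategy as the paper's (uniform-in-$k$ convergence of the determinants $D(\cdot,u_n) \to D(\cdot,u)$, then isolate the zero sets). You are, however, more careful at a point where the paper's own proof is elliptical to the point of being wrong as written: the paper concludes by ``Choosing $N$ so that $\sup_{k\in\bbC,\,n\geq N}|D(k;u_n)-D(k;u)|<\delta$ gives the desired conclusion,'' but that $\delta$ is the distance parameter in the $k$-plane and has no logical bearing on the size of the determinant. What is actually needed is precisely what you supply: a strictly positive lower bound $\eta$ for $|D(\cdot,u)|$ on $A_\delta=\{k:\dist(k,Z)\geq\delta\}$, obtained from compactness of $A_\delta\cap\overline{B(0,R)}$ and the normalization $D(k,u)\to 1$ as $|k|\to\infty$, after which choosing $N$ so that the uniform error is below $\eta/2$ forces $Z_n\subset\{k:\dist(k,Z)<\delta\}$. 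Your triangle-inequality finish (made rigorous by compactness of $Z_n$, so the infimum defining $\dist(z,Z_n)$ is attained) is also a step the paper omits. In short, your version is a corrected and completed form of the paper's argument, not a different one.
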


\begin{proof}
Since $u_n \rarr u$ in $X_\alpha$, it follows that there is an admissible pair of exponents $(p,t)$ so
that $u_n \rarr u$ in $L^t \cap L^{t'}$ and so $S_{k,u_n} \rarr S_{k,u}$ is the Mikhlin-Itskovich algebra
$\calE_p$. It then follows from Theorem \ref{thm:GKK} that $D(k;u_n) \rarr D(k;u)$ uniformly in $k \in \bbC$. Choosing $N$ so that $\sup_{k \in \bbC, \, n \geq N} |D(k;u_n) - D(k;u)| < \delta$
gives the desired conclusion.
\end{proof}

Next, we study continuity of the map $u \mapsto m_2(\dotarg,\diamond;u)$. As always we fix $u \in X_\alpha$ for some $\alpha \in (1/2,1)$ and an admissible pair $(p,t)$. 
It follows from \eqref{CGO.scal} -- \eqref{CGO.scal.m2} that
\begin{equation}
\label{m2.sol}
m_2 = e_{-k} V_{k,u} 1 + e_{-k} V_{k,u} \left( \left(I-S_{k,u} \right)^{-1} S_{k,u} 1 \right)
\end{equation} 
so, to prove the continuity, it suffices to prove that 
\begin{itemize}
\item[(i)] $e_{-k} V_{k,u_n} 1  \rarr e_{-k} V_{k,u} 1$ in $L^\infty(\Omega_\delta;L^p(\bbC))$ as 
$n \rarr \infty$,
\item[(ii)] $S_{k,u_n} 1 \rarr S_{k,u} 1$ in $L^\infty(\Omega_\delta,L^p(\bbC))$ as $n \rarr \infty$,
\item[(iii)] $(I-S_{k,u_n})^{-1} \rarr (I-S_{k,u})^{-1}$ in $\calB(L^p)$ as $n \rarr \infty$, uniformly $k \in \Omega_\delta$.
\end{itemize}
As before, we may always assume that $k \in \Omega_\delta$ belongs to $\bbC \setminus Z_n$ if $n$ is large enough. 

First, we show:

\begin{lemma}
\label{lemma:UV}
Fix $u \in X_\alpha$ for some $\alpha \in (1/2,1)$, and let $(p,t)$ be an admissible pair.  Let $\{ u_n \}$ be a sequence from $X_\alpha$ converging to $u$. 
\begin{itemize}
\item[(i)]		$e_{-k} V_{k,u_n} 1  \rarr e_{-k} V_{k,u} 1$ in $L^\infty(\bbC;L^p(\bbC))$, and
\item[(ii)]	$S_{k,u_n} 1 \rarr S_{k,u} 1$ in $L^\infty(\bbC;L^p(\bbC))$ as $n \rarr \infty$,
\end{itemize}
\end{lemma}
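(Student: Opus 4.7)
The plan is to obtain both convergences from the Hardy--Littlewood--Sobolev estimate \eqref{P1} (and its $\Cauchybar$-analogue) together with H\"older's inequality, exploiting the pointwise identity $|e_k(z)| = 1$ so that every resulting constant is independent of $k$. Fixing the admissible pair $(p,t)$ compatibly with $\alpha$ as in the proof of Proposition \ref{prop:todo} (so $t \in (2/(1+\alpha),4/3)$ and $p \in (\widetilde{t}, t')$), the embedding chain $X_\alpha \hookrightarrow L^{t} \cap L^{t'} \hookrightarrow L^{2} \cap L^{2p/(p+2)} \cap L^{p}$ (the second inclusion by interpolation, since admissibility gives $t < 2p/(p+2) < 2 < p < t'$) delivers $u_n \to u$ in each of $L^2$, $L^{2p/(p+2)}$, and $L^p$.

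For (i), I will simply use $|e_{\pm k}| \equiv 1$ together with the $\Cauchybar$-analogue of \eqref{P1}:
$$ \bigl\| e_{-k}\bigl(V_{k,u_n} 1 - V_{k,u} 1\bigr) \bigr\|_p = \tfrac{1}{2}\bigl\| \Cauchybar\bigl(e_k\,\overline{u_n - u}\bigr) \bigr\|_p \lesssim_{\,p} \| u_n - u \|_{2p/(p+2)}, $$
with a constant independent of $k$; the right side vanishes as $n \to \infty$.

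For (ii), I will split bilinearly
\begin{align*}
S_{k,u_n} 1 - S_{k,u} 1
  &= \tfrac{1}{4}\,\Cauchy\!\bigl((u_n - u)\,e_{-k}\,\Cauchybar(e_k\,\overline{u_n})\bigr) \\
  &\quad + \tfrac{1}{4}\,\Cauchy\!\bigl(u\,e_{-k}\,\Cauchybar(e_k\,\overline{u_n - u})\bigr),
\end{align*}
and to each piece apply \eqref{P1} to the outer $\Cauchy$, H\"older with exponents $2$ and $p$ to the product inside (noting $\tfrac{1}{2}+\tfrac{1}{p} = \tfrac{p+2}{2p}$), and finally the $\Cauchybar$-analogue of \eqref{P1} to the inner Cauchy term. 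The result is
$$ \bigl\| S_{k,u_n} 1 - S_{k,u} 1 \bigr\|_p \lesssim_{\,p} \| u_n - u \|_2 \, \| u_n \|_{2p/(p+2)} + \| u \|_2 \, \| u_n - u \|_{2p/(p+2)}, $$
again uniformly in $k$. This tends to zero because $\{\|u_n\|_{2p/(p+2)}\}$ is bounded and $u_n \to u$ in $L^2 \cap L^{2p/(p+2)}$.

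I do not expect a serious obstacle. The only delicate point is the exponent bookkeeping verifying that, for $\alpha > 1/2$, the admissible pair can indeed be chosen so that the three embeddings $X_\alpha \hookrightarrow L^2$, $L^{2p/(p+2)}$, $L^p$ all hold; this reduces to $p > 2/\alpha$, which follows from $p > \widetilde{t}$ and $t > 2/(1+\alpha)$ (equivalently $1/\widetilde{t} < \alpha/2$), the choice already implicit in the setup of Proposition \ref{prop:todo}.
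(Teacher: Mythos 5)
Your proof is correct and follows essentially the same route as the paper: both rest on the Hardy--Littlewood--Sobolev bound \eqref{P1} (and its $\Cauchybar$ analogue) combined with H\"older, using $|e_{\pm k}|\equiv 1$ to get $k$-uniform constants. You supply somewhat more exponent bookkeeping than the paper (whose one-line proof quotes only the estimate $\norm{\Cauchy[uf]}{p}\lesssim\norm{u}{2}\norm{f}{p}$, which as stated does not literally apply to part (i) since $1\notin L^p$), so your version is, if anything, the cleaner of the two.
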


\begin{proof}
Since $u_n \rarr u$ in $X_\alpha$, we also have $u_n \rarr u$ in $L^{t} \cap L^{t'}$, and hence in $L^2$. From \eqref{P1} and H\"{o}lder's inequality we have 
\begin{equation}
\label{Cauchy.2p}
\norm{\Cauchy\left[ uf \right]}{p} \lesssim_{\, p} \norm{u}{2} \norm{f}{p}.
\end{equation}
The conclusions (i) and (ii) follow from this estimate.
\end{proof}

The resolvents $R_n = (I-S_{k,u_n})^{-1}$ and $R=(I-S_{k,u})^{-1}$ exist for $k \in Z_\delta$ by Lemma 
\ref{lemma:Z}. Observe that
$$
R_n - R = R_n \left( W_{k,u_n} \circ V_{k,u_n} - W_{k,u} \circ V_{k,u} \right) R 
$$
so that continuity of the resolvent will follow from (i) estimates on $\norm{R}{\calB(L^p)}$ and $\norm{R_n}{L^p}$ uniform in $k \in \Omega_\delta$, (ii) uniform estimates on $\norm{W_{k,u_n}}{\calB(L^p)}$, $\norm{V_{k,u_n}}{\calB(L^p)}$, and (iii) norm estimates on $\norm{W_{k,u_n} - W_{k,u}}{\calB(L^p)}$
and $\norm{V_{k,u_n} - V_{k,u}}{\calB(L^p)}$ which vanish as $n \rarr \infty$. The uniform estimates (ii)
and the norm estimates (iii) follow from \eqref{Cauchy.2p}. Thus, it remains to prove uniform estimates on 
the resolvents $R$ and $R_n$. For this, the following estimate will suffice.

\begin{lemma}
\label{lemma:R.bd}
Suppose that $u \in X_\alpha$, that $(t,p)$ is an admissible pair, and $\delta>0$. Then
$ \sup_{k \in \Omega_\delta} \left(I - S_{k,u} \right)^{-1} \lesssim_{\, \delta} 1$
\end{lemma}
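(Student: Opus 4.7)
For fixed $u \in X_\alpha$, the plan is to combine continuity of the resolvent on a compact truncation of $\overline{\Omega_\delta}$ with a decay estimate for $S_{k,u}$ as $|k|\to\infty$.

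First, observe that $\overline{\Omega_\delta} = \{k : \dist(k,Z)\geq \delta\}$ is closed and, since $\delta > 0$, disjoint from $Z$; by Corollary \ref{coro:exbd} it is the complement of a bounded open set. Hence $(I - S_{k,u})$ is invertible at every $k \in \overline{\Omega_\delta}$. The map $k \mapsto S_{k,u} \in \calB(L^p)$ is continuous in operator norm: decomposing $S_{k,u} = W_{k,u}\circ V_{k,u}$, applying \eqref{Cauchy.2p} to each factor, and using dominated convergence to control $\|(e_k - e_{k'})\bar u\, h\|_{2p/(p+2)}$ uniformly for $h$ in the unit ball of $L^p$ gives the claim. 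Consequently $k \mapsto (I - S_{k,u})^{-1}$ is a continuous $\calB(L^p)$-valued map on $\overline{\Omega_\delta}$, and on any compact set of the form $\overline{\Omega_\delta} \cap \{|k| \leq R\}$ the resolvent norm is bounded by some constant $C_R$.

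Second, the plan is to establish $\|S_{k,u}\|_{\calB(L^p)} \to 0$ as $|k|\to\infty$, so that the Neumann series yields $\|(I - S_{k,u})^{-1}\| \leq 2$ for $|k| \geq R_0$ large enough. The key calculation is the change of variable $\eta = \zeta - z$:
\[
e_{-k}(z)\,\Cauchybar[e_k g](z) = -\frac{1}{\pi}\int \frac{e_k(\eta)}{\overline{\eta}}\,g(z+\eta)\,dm(\eta),
\]
which is a convolution operator with oscillating kernel. On the Fourier side, this is a multiplier with symbol proportional to $(\xi + k)^{-1}$, which decays pointwise as $|k|\to\infty$. Combined with multiplication by $u \in L^2 \cap L^{2p/(p+2)}$ and the outer bound \eqref{Cauchy.2p} on $\calC$, this transfers to operator-norm decay of $S_{k,u}$ on $L^p$. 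Taking $C_\delta = \max(C_{R_0},2)$ then yields the claimed uniform bound.

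The main obstacle is the operator-norm decay at infinity: strong convergence $S_{k,u} h \to 0$ for each fixed $h$ follows quickly from Riemann--Lebesgue cancellation in the oscillating symbol, but upgrading this to uniform (operator-norm) decay on the unit ball of $L^p$ requires either the Fourier-multiplier estimate indicated above, or a collective-compactness argument exploiting the H\"older regularity of $\calC[f]$ from Lemma \ref{lemma:P1} and the spatial decay $(1+|z|^2)^{\alpha/2}u \in L^2$ encoded in $X_\alpha$ to rule out, by contradiction, a sequence $|k_n|\to\infty$ along which $\|(I - S_{k_n,u})^{-1}\|$ blows up.
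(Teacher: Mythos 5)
Your overall structure matches the paper's: a continuity-compactness argument on the bounded part $\{|k| \leq R\} \cap \overline{\Omega_\delta}$ plus a decay estimate on $S_{k,u}$ for $|k|$ large. The compactness part is fine. But you yourself flag the decay at infinity as ``the main obstacle,'' and then you do not close it: the Fourier-multiplier sketch does not work as stated. The change of variable does exhibit $e_{-k}\Cauchybar e_k$ as convolution with $-\pi^{-1} e_k(\eta)/\bar\eta$, whose symbol is a translate of the symbol of $\Cauchybar$; but a translate of a fixed singular symbol decays only \emph{pointwise} in $\xi$, not uniformly, so this gives strong (not operator-norm) convergence to zero. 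And since $\Cauchybar$ is not even bounded on a single $L^p$ — it maps $L^{2p/(p+2)}\to L^p$ — a multiplier argument on the whole operator $S_{k,u}$ would have to interleave the two multiplications by $u$, at which point one is no longer dealing with a pure Fourier multiplier. The ``collective compactness'' alternative you mention is only gestured at, not carried out. So there is a genuine gap at the crux of the lemma.

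The paper closes this gap with a \emph{quantitative} estimate, \eqref{pre.Tk.decay} (imported from \cite[Eq.\ (3.13)]{Perry:2016}), which gives $\|S_{k,u}h\|_p \lesssim \langle k\rangle^{-1}\bigl(\|u\|_2^2 + \|u\|_2\|\dbar u\|_2 + \|u\|_p\|u\|_{2p/(p-2)}\bigr)\|h\|_p$ after substituting $\psi = V_{k,u}h$. The mechanism behind that estimate is integration by parts against the oscillating factor $e_k$, which is exactly how one trades one power of $\langle k\rangle$ for one derivative of $u$ — so the hypothesis used is $\dbar u \in L^2$, i.e.\ the $W^{1,2}$ component of $X_\alpha$. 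Note that in your closing remark you reach for the \emph{other} half of $X_\alpha$, the weight condition $(1+|z|^2)^{\alpha/2}u \in L^2$; that is the wrong piece of the hypothesis for this step. If you want to complete your argument without citing the Perry 2016 estimate, the correct route is an explicit integration by parts in the $\zeta$-integral defining $\Cauchybar[e_k \bar u\,\cdot\,]$, writing $e_k = (i\bar k)^{-1}\dbar e_k$, which produces the $\langle k\rangle^{-1}$ gain at the cost of landing a $\dbar$ on $\bar u$ — hence the appearance of $\|\dbar u\|_2$.
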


\begin{proof}
We will show first that $(I-S_{k,u})^{-1}$ has norm bounded by $2$ for all $k \in \bbC$ with $|k| \geq R$
for some constant $R$ depending on $\norm{u}{X_\alpha}$. We will then use a continuity-compactness argument to show that 
\begin{equation}
\label{con.com}
 \sup_{k \in \bbC: |k| \leq R, \dist(k,Z) \geq \delta/2} \norm{\left(I-S_{k,u}\right)^{-1}}{\calB(L^p)} \lesssim 1
\end{equation}
where the implied constant depends on $\delta$, $R$, and $u$.

First, we recall from \cite[Equation (3.13)]{Perry:2016} the
estimate
\begin{equation}
\label{pre.Tk.decay}
 \norm{T_k \psi}{p} 
 		\lesssim_{\, p} 	\left\langle k \right\rangle^{-1}
 						\left( 
 							\norm{u}{2} \norm{\psi}{p} + \norm{\dbar u}{2} \norm{\psi}{p} + 
 							\norm{u}{p} \norm {\dbar \psibar}{2} 
 						\right)
\end{equation}
Putting $\psi  = V_{k,u} h$ we recover
$$
\norm{S_{k,u} h}{p} \lesssim_{\, p}
	\langle k \rangle^{-1} \left( \norm{u}{2}^2 + \norm{u}{2} \norm{\dbar u}{2} + \norm{u}{p} \norm{u}{2p/(p-2)} \right) \norm{h}{p}.
$$
(recall that, for an admissible pair, $\norm{u}{2p/(p+2)}$ is bounded by
$\norm{u}{L^{t} \cap L^{t'}}$).
This shows that $\norm{S_{k,u}}{\calB(L^p)} < 1/2$ for $k$ sufficiently large
depending on $\norm{u}{X_\alpha}$.

It remains to prove \eqref{con.com}. The set 
$$U_{\delta,R} = \{k \in \bbC: |k| \leq R, \, \dist(k,Z) \geq \delta/2\}$$ 
is a compact subset of $\bbC$, while the map
$ k \rarr (I-S_{k,u})^{-1} $
is continuous from $U_{\delta,R}$ into $\calB(L^p)$. It follows that
$$ \sup_{k \in U_{\delta,R}} (I-S_{k,u})^{-1} \lesssim_{\, \delta,R} 1 $$
\end{proof}

\begin{proof}[Proof of Theorem \ref{thm:dbar}]
An immediate consequence of Propositions \ref{prop:D.dbar} and \ref{prop:todo}
together with Lemmas \ref{lemma:Z}, \ref{lemma:UV}, \and \ref{lemma:R.bd}.
\end{proof}


\section{The One-Soliton Solution}
\label{sec:one-soliton}

We now consider the one-soliton potential \cite{APP:1989}
\begin{equation}
\label{u.1s}
u_0(z) = \frac{2 e_{k_0}}{\rho(z)^2}
\end{equation}
(recall \eqref{rho}).  With this choice of $u$, \eqref{CGO.m.sym} admits the formal solution
\begin{subequations}
\label{m.1s}
\begin{align}
m_1(z,k) &= 1+\frac{1}{k-k_0} \frac{i}{1+|z|^2} \zbar\\
m_2(z,k) &= \frac{1}{k - {k_0}} \frac{i}{1+|z|^2} e_{-k_0}(z)
\end{align}
\end{subequations}
Using \eqref{data.s.lim}-\eqref{data.r.lim}, we read off
\begin{align}
\label{s.1s}
s(k) &= \frac{2i}{k-k_0}\\
\label{r.1s}
r(k) &= 0
\end{align}
The formal solution \eqref{m.1s} is correct for large $|k|$ since equation \eqref{CGO.m} has a unique solution for $|k|$ sufficiently large. To conclude that this equation holds for \emph{all} $k\neq k_0$ we must show that $k_0$ is the only exceptional point.  

In what follows, we will set $\kappa= k-k_0$ and define 
\begin{equation}
\label{T.soliton}
T(\kappa)=S_{k_0+\kappa,u_0}= \Cauchy e_{-\kappa} \rho^{-2} \CauchyBar e_\kappa \rho^{-2}.
\end{equation} 

We will prove:

\begin{theorem}
\label{thm:1ss}
For $u_0$ given by \eqref{u.1s}, the operator $I-T(\kappa)$ has a nontrivial nullspace if and only if $\kappa=0$.  Moreover, the zero eigenvalue of $I-T(0)$ is semisimple and of multiplicity two, and the zero eigenvalue of $(I-G(k_0,u_0))$ is also semisimple of multiplicity two. Finally, if $P(0)$ projects onto the nullspace of $I-T(0)$, then
\begin{equation}
\label{T.Matrix}
P(0) T(\kappa) P(0) 
	= \bigtwomat{1}{i\kappabar}{-i\kappa}{1}+ \calO_{\delta}\left( |\kappa|^{2-\delta} \right)
\end{equation}
for any $\delta >0$.
\end{theorem}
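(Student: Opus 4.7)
\smallskip

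\noindent\textbf{Proof plan.} The plan is to combine three ingredients: (i) an explicit construction of two independent null vectors of $I-T(0)$ built from the identities \eqref{PPbar.int}; (ii) a Taylor expansion of $T(\kappa)$ in $\kappa$ that delivers the matrix \eqref{T.Matrix} directly; and (iii) the $\dbar$-equation of Theorem \ref{thm:dbar} together with Lemma \ref{lemma:multiplicities} to pin down the multiplicity and to globalize to $Z=\{k_0\}$.

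\smallskip

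\noindent\textbf{Step 1 (scattering data and null vectors).} I first verify by direct substitution into \eqref{CGO.m.sym} that the formal pair \eqref{m.1s} is a genuine $L^p$-solution whenever $\kappa\neq 0$; reading off \eqref{data.s.lim}--\eqref{data.r.lim} recovers $s(k)=2i/\kappa$ and $r(k)=0$. Next I construct two independent null vectors of $I-T(0)$. The equation $T(0)\psi=\psi$ reduces, via $\xi:=\CauchyBar(\rho^{-2}\psi)$, to the coupled system $\dbar\psi=\rho^{-2}\xi$, $\dee\xi=\rho^{-2}\psi$ (the differential form of the CGO system at $k=k_0$). Using \eqref{PPbar.int} one sees that the two pairs built from $\{\rho^{-2},\,z\rho^{-2},\,\bar z\rho^{-2}\}$ provide two linearly independent solutions $\psi_1,\psi_2$, each lying in $L^p(\bbC)$ for every $p>2$ (they decay like $|z|^{-1}$ at infinity). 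This shows $\dim\ker(I-T(0))\geq 2$.

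\smallskip

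\noindent\textbf{Step 2 (dual basis and semisimplicity).} The adjoint operator $T(0)^\ast$ acting on $L^{p'}$ has the same symmetry structure, so an analogous construction using \eqref{PPbar.int} yields $\chi_1,\chi_2\in L^{p'}(\bbC)$ with $T(0)^\ast\chi_i=\chi_i$. After a linear change of basis we may normalize so that $\langle\chi_i,\psi_j\rangle=\delta_{ij}$. Semisimplicity of the eigenvalue $1$ of $T(0)$ then follows by a one-line pairing argument: if $\psi\in\ker\bigl((I-T(0))^2\bigr)$ then $(I-T(0))\psi=a\psi_1+b\psi_2$ for some scalars, and pairing with $\chi_i$ gives $a=b=0$ since $\langle\chi_i,(I-T(0))\psi\rangle=\langle(I-T(0)^\ast)\chi_i,\psi\rangle=0$. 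The same argument works for the companion operator $G(k_0,u_0)$ appearing in the theorem statement.

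\smallskip

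\noindent\textbf{Step 3 (matrix expansion).} Splitting
\[
T(\kappa)-T(0)=\Cauchy\bigl((e_{-\kappa}-1)\rho^{-2}\CauchyBar(e_\kappa\rho^{-2}\,\cdot\,)\bigr)+\Cauchy\bigl(\rho^{-2}\CauchyBar((e_\kappa-1)\rho^{-2}\,\cdot\,)\bigr)
\]
and inserting the expansion $e_{\pm\kappa}(z)=1\pm i(\kappa z+\bar\kappa\bar z)+R_\kappa(z)$ with $R_\kappa=\bigO{|\kappa|^2|z|^2}$, the linear-in-$(\kappa,\bar\kappa)$ part of $T(\kappa)$ acts on $\psi_j$ by an integral kernel involving $z\rho^{-2},\bar z\rho^{-2}$. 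The inner products $\langle\chi_i,T_1(\kappa)\psi_j\rangle$ reduce, after an application of \eqref{PPbar.int} and integration by parts, to elementary integrals of $\rho^{-2}$ against itself; the $J$-conjugation symmetry of the $\psi_i$ and $\chi_j$ forces the diagonal entries to vanish at first order, leaving the antidiagonal entries $i\bar\kappa$ and $-i\kappa$ as in \eqref{T.Matrix}. To bound the remainder, I interpolate between the pointwise bound $|R_\kappa|\lesssim|\kappa|^2|z|^2$ (which is too large for direct integration) and the uniform bound $|R_\kappa|\lesssim|\kappa||z|$; this produces the $|\kappa|^{2-\delta}$ loss for any $\delta>0$.

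\smallskip

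\noindent\textbf{Step 4 (globalization via the determinant).} From Step 3 the $2\times 2$ determinant of $I-P(0)T(\kappa)P(0)$ equals $-|\kappa|^2+\bigO{|\kappa|^{3-\delta}}$. Combining with the factorization used in the proof of Lemma \ref{lemma:multiplicities} and the Sz.-Nagy similarity (Lemma \ref{lemma:APQW}), one obtains the asymptotic $D(k,u_0)=c|\kappa|^2(1+o(1))$ as $\kappa\rarr 0$ with $c\neq 0$. Lemma \ref{lemma:multiplicities}, together with the bound from Step 1, then forces $\dim\ker(I-T(0))=2$ exactly. To upgrade the local statement to $Z=\{k_0\}$, I integrate the $\dbar$-equation of Theorem \ref{thm:dbar}: with $s(k)=2i/\kappa$, the equation reads $\dbar\log D=1/\bar\kappa-c(k)$, so writing $\log D=\log|\kappa|^2+F(k)$ reduces the problem to the globally solvable $\dbar F=-c$. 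Since $c$ is a bounded, continuous function of $k$ (Remark \ref{rem:c}), the standard Cauchy-transform theory yields a continuous $F$; the normalization $D(k)\to 1$ as $|k|\to\infty$ then pins $F$ down uniquely and shows $D(k)=|\kappa|^2\cdot h(k)$ with $h$ nowhere vanishing, so $Z=\{k_0\}$.

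\smallskip

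\noindent\textbf{Main obstacle.} The hardest step is Step 3: extracting the precise antidiagonal form $i\bar\kappa,\,-i\kappa$ (rather than some less symmetric perturbation) and controlling the quadratic remainder to the claimed $|\kappa|^{2-\delta}$ accuracy. The $|z|^2$ growth of the next-order Taylor term prevents a clean $|\kappa|^2$ bound and requires the interpolation argument sketched above, and one must verify that the choice of dual basis respects the symmetry enough that the diagonal entries vanish at first order.
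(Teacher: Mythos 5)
Your Step 1 (explicit null vectors via \eqref{PPbar.int}) and Step 3 (expansion of $P(0)T(\kappa)P(0)$ via the biorthogonal basis and the interpolated estimate on $e_\kappa -1 - i(\kappa z + \kappabar\zbar)$) are essentially the paper's arguments. However, two of your steps have genuine gaps.

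\textbf{The semisimplicity argument in Step 2 is circular.} You claim that if $\psi\in\ker\left((I-T(0))^2\right)$ then $(I-T(0))\psi = a\psi_1 + b\psi_2$, and pairing with $\chi_i$ kills $a,b$. But the first assertion presumes $\ker(I-T(0)) = \mathrm{span}\{\psi_1,\psi_2\}$, and at this stage you only know $\mathrm{span}\{\psi_1,\psi_2\}\subseteq\ker(I-T(0))$. If there were a third eigenvector $\psi_3$ with $\langle\chi_i,\psi_3\rangle = 0$, then $(I-T(0))\psi$ could equal $\psi_3$ and the pairing argument tells you nothing. More fundamentally: nondegeneracy of the duality pairing between the \emph{full} spaces $\ker(I-T(0))$ and $\ker(I-T(0)^*)$ is \emph{equivalent} to semisimplicity, and verifying $\langle\chi_i,\psi_j\rangle = \delta_{ij}$ on a subspace does not give this unless you already know the dimensions are exactly two. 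To get that dimension via Lemma \ref{lemma:multiplicities} requires semisimplicity — so you are going in circles. The paper breaks the circle with a structural observation you are missing: $\rho^{-1}T(0)\rho = B^*B$ is a \emph{positive self-adjoint} compact operator on $L^2$, which gives semisimplicity for free; the dimension count via the determinant and Lemma \ref{lemma:multiplicities} then becomes legitimate. You need some analogue of this observation.

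\textbf{Step 4's globalization is hand-waved.} The $\dbar$-equation $\dbar\log D = 1/\kappabar - c$ is only known a priori where $D\neq 0$, and the formula $s(k) = 2i/\kappa$ is only validated where the integral equation is uniquely solvable. Your proposal to write $D = |\kappa|^2 e^F$ and solve $\dbar F = -c$ globally by the Cauchy transform faces two problems: (i) $c(k)$ decays only like $|\kappa|^{-1}$ at infinity, which is not in the range of integrability where $\calC[c]$ is classically defined, and (ii) even granting existence of $F$, you would still need to show the $\dbar$-equation for $\log D$ persists across potential zeros of $D$ before $D = |\kappa|^2 e^F$ can be asserted globally. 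The paper sidesteps both by observing that $D(k) = \Delta(\kappa)$ is \emph{radial} in $\kappa$ (a conjugation identity $\calU(\theta)T(\kappa)\calU(\theta)^{-1} = T(e^{i\theta}\kappa)$), reducing the $\dbar$-equation to a scalar ODE in $t = |\kappa|^2$; one then integrates from the modulus $\alpha$ of a hypothetical first zero out to $\infty$ (where the asymptotics of $\gamma$ are explicit and $t^{-1}-h(t)$ is integrable) to derive a finite value for $\log H(\alpha)$, a contradiction. This radiality step is essential and should be supplied.
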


\begin{remark} The error estimate in \eqref{T.Matrix} can be improved to $\bigO{|\kappa|^2 \log |\kappa|}$ but we will not need this. We will need \eqref{T.Matrix} for the perturbation calculations in \S \ref{sec:perturb}. 
\end{remark}

\begin{proof} The theorem is an immediate consequence of Propositions \ref{prop:diff}, \ref{prop:two}, and \ref{prop:pert} below.
\end{proof}

We will prove Theorem \ref{thm:1ss} in three steps. First, we show that $T(\kappa)$ is a differentiable operator-valued function in the $\calE_p$ norm (which is stronger than the operator norm on $L^p$). Next, we use the determinant $\Det(I-T(\kappa))$ to prove that there is a unique singular point, and compute the determinant explicitly. Combining this explicit formula together and the fact that $T(0)$ is conjugate to a self-adjoint operator, we show that the zero eigenvalue of $I-T(0)$ is semisimple and of multiplicity two. A similar argument shows that the zero eigenvalue of $I -G(k_0,u_0)$ has the same property.
Finally, we use perturbation theory to obtain the formula \eqref{T.Matrix}.

\subsection{Smooth Dependence on $\kappa$}

The operator $T(\kappa)$ defined in \eqref{T.soliton}  belongs to the algebra $\calE_p$ of integral operators on $L^p(\bbC)$ for any  $p>2$ and each $\kappa \in \bbC$.
The operator $T(0)$ has an eigenvalue $\lambda=1$ since the functions 
\begin{equation}
\label{1s.scalar.basis}
\psi_1(z) = \zbar \rho(z)^{-2}, \quad \psi_2(z) = \rho(z)^{-2}
\end{equation}
are eigenvectors by \eqref{PPbar.int}. We will show that the eigenvalue 
$\lambda=1$ is a semisimple eigenvalue with multiplicity two, so the functions \eqref{1s.scalar.basis} span the eigenspace. First, we note:
\begin{proposition}
\label{prop:diff}
The map $\kappa \rarr T(\kappa)$ is differentiable as a map from $\bbC$ to 
$\calE_p$ for any $p>2$.
\end{proposition}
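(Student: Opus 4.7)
The plan is to verify the first-order Taylor expansion
\begin{equation*}
T(\kappa+h) - T(\kappa) = h\,\dee_\kappa T(\kappa) + \bar h\,\dbar_\kappa T(\kappa) + o(|h|)
\end{equation*}
in the $\calE_p$ operator-ideal norm, where the candidate Wirtinger derivatives come from formally differentiating the multiplication factors $e_{\pm\kappa}$. Since $\dee_\kappa e_{\pm\kappa} = \pm iz\,e_{\pm\kappa}$ and $\dbar_\kappa e_{\pm\kappa} = \pm i\zbar\,e_{\pm\kappa}$, the product rule applied to $T(\kappa) = \Cauchy\,e_{-\kappa}\rho^{-2}\,\CauchyBar\,e_\kappa\rho^{-2}$ yields
\begin{equation*}
\dee_\kappa T(\kappa) = -i\Cauchy\,z\,e_{-\kappa}\rho^{-2}\,\CauchyBar\,e_\kappa\rho^{-2} + i\Cauchy\,e_{-\kappa}\rho^{-2}\,\CauchyBar\,z\,e_\kappa\rho^{-2},
\end{equation*}
with the $\dbar_\kappa$ analog obtained by replacing $z$ with $\zbar$. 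Just as in the commutator computation in the proof of Proposition \ref{prop:D.dbar}, the $\dbar_\kappa$ expression collapses via $[\zbar,\CauchyBar]h=\frac{1}{\pi}\int h\,dm$ to the rank-one operator $-\frac{i}{\pi}\Cauchy(e_{-\kappa}\rho^{-2})\otimes\overline{e_\kappa\rho^{-2}}$, which lies trivially in $\calE_p$. Since the limit of the difference quotient, once shown to exist in $\calE_p$, automatically lies in $\calE_p$ by completeness, it is the Taylor expansion itself that needs verification.

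To establish the expansion, I would use $e_{\pm(\kappa+h)} = e_{\pm\kappa}e_{\pm h}$ together with
\begin{equation*}
e_{\pm h}(z) = 1 \pm i(hz + \bar h\,\zbar) + \tau_{\pm h}(z),\qquad |\tau_{\pm h}(z)| \leq C\min\{|hz|^2,\,1+|hz|\}.
\end{equation*}
Substituting into $T(\kappa+h)$ and expanding each slot produces $T(\kappa)$, the linear part $h\,\dee_\kappa T(\kappa) + \bar h\,\dbar_\kappa T(\kappa)$, and a remainder $R(\kappa,h)$ that is a finite sum of single-$\tau$ operators of the form $\Cauchy\,\tau_{\pm h}\,e_{-\kappa}\rho^{-2}\,\CauchyBar\,e_{\kappa+h}\rho^{-2}$ (and three mirror versions), together with bilinear cross terms in which each exponential expansion contributes one factor of order $|h|$.

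The principal obstacle is bounding the $\tau$-remainders in $\calE_p$. A pointwise split at $|z|\sim|h|^{-1}$ yields $|\tau_{\pm h}\rho^{-2}|\lesssim |h|^{2}$ on $\{|z|\leq|h|^{-1}\}$ and $|\tau_{\pm h}\rho^{-2}|\lesssim |z|^{-2}+|h||z|^{-1}$ on the complement, giving
\begin{equation*}
\|\tau_{\pm h}\,\rho^{-2}\|_{L^q(\bbC)} \lesssim |h|^{2-2/q}\qquad\text{for every }q\in(2,\infty).
\end{equation*}
Feeding this into the Cauchy mapping estimates of Lemma \ref{lemma:P1} and the $\calE_p$-norm machinery underlying Proposition \ref{prop:brown} (applied with two distinct potentials, $\tau_{\pm h}\rho^{-2}$ and $\rho^{-2}$, in the two slots) controls each single-$\tau$ operator in $\calE_p$ by $O(|h|^{2-2/q})$, which is $o(|h|)$ as soon as $q>2$. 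The bilinear cross terms are $O(|h|^{2})$ by analogous, slightly easier bookkeeping, using $\|(hz+\bar h\zbar)\rho^{-2}\|_{L^{q}}\lesssim |h|$ for $q>2$ in each slot. Summing the contributions gives the required $o(|h|)$ bound in the $\calE_p$ norm and completes the verification of differentiability.
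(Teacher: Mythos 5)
Your general strategy (Taylor expansion of the exponential factors, isolate a linear-in-$h$ part, show the remainder is $o(|h|)$ in the $\calE_p$ norm) is the same as the paper's, and you correctly identify that the $\dbar_\kappa$-derivative collapses to a rank-one operator via the commutator identity. But there is a genuine gap in the decomposition itself: the ``single-$\tau$'' remainder operators you produce are not elements of $\calE_p$, so the estimate cannot close.

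The issue is that you expand $e_{-h}(z')$ and $e_h(w)$ \emph{separately} in the two slots. The second-order Taylor remainder $\tau_{\pm h}(z)=e_{\pm h}(z)-1\mp i(hz+\bar h\zbar)$ grows linearly: for $|z|\gg |h|^{-1}$ one has $|\tau_{\pm h}(z)|\approx 2|\mathrm{Re}(hz)|\sim |h||z|$, because the linear Taylor polynomial $\mp i(hz+\bar h\zbar)$ you subtracted is unbounded while $e_{\pm h}$ is bounded. Consequently $\tau_{\pm h}(z)\rho(z)^{-2}\approx |h||z|^{-1}$ for large $|z|$, which is \emph{not} in $L^t(\bbC)$ for any $t\le 2$. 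Membership in $\calE_p$ — as established in Proposition \ref{prop:brown} — requires the potential in each slot to lie in $L^t\cap L^{t'}$ with $t<4/3$, because both the $\|a\|_{L^p(L^{p'})}$ and $\|a^*\|_{L^{p'}(L^p)}$ estimates force an $L^t$-bound at the small exponent. Your $L^q$ bound $\|\tau_{\pm h}\rho^{-2}\|_{L^q}\lesssim |h|^{2-2/q}$ is correct for $q>2$, but the quantity is infinite for $q<2$, so the single-$\tau$ operator $\Cauchy\,\tau_{\pm h}e_{-\kappa}\rho^{-2}\,\CauchyBar\,e_{\kappa+h}\rho^{-2}$ is not in $\calE_p$ at all, let alone $o(|h|)$ there. (The same phenomenon appears in the ``linear'' terms individually; they survive only because they combine into a commutator.)

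The missing step is exactly the one the paper uses: in the kernel
\[
K(z,w;\kappa)=-\frac{1}{\pi^2}\int\frac{1}{z-z'}\,\rho(z')^{-2}\,\frac{e_\kappa(w-z')}{\zbar'-\wbar}\,\rho(w)^{-2}\,dm(z'),
\]
the two exponential factors have already been merged, $e_{-\kappa}(z')e_\kappa(w)=e_\kappa(w-z')$, so one should Taylor-expand $e_{\kappa+h}(w-z')=e_\kappa(w-z')e_h(w-z')$ in $h$ \emph{in the combined argument} $w-z'$. The remainder $G(w-z',h)$ then vanishes at $w=z'$, and the ratio $G(w-z',h)/(\zbar'-\wbar)$ is a pointwise bounded function (the singular Cauchy kernel is absorbed), obeying a H\"older bound $\lesssim |h|^{1+\theta}|w-z'|^\theta$. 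After splitting $|w-z'|^\theta\lesssim |z'|^\theta+|w|^\theta$ this produces a sum of \emph{rank-one} kernels $f(z)g(w)$ with $f\in L^p$, $g\in L^{p'}$ and prefactor $|h|^{1+\theta}$, which is trivially in $\calE_p$ and $o(|h|)$. Your outline does not carry out this merging, and without it the remainder bound fails.
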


\begin{proof}
To prove that the map is  differentiable, we need to show that the formally obvious derivatives with respect to $\kappa$ and $\kappabar$ exist in $\calE_p$. First note that the operator $T(\kappa)$ has integral kernel
$$ K(z,w;\kappa) 
	= -\frac{1}{\pi^2} 
			\int \frac{1}{z-z'} \, \rho(z')^{-2}
				\, \frac{e_\kappa(w-z')}{\zbar'-\wbar} \, \rho(w)^{-2} \, dm(z')
$$

To show that $K(z,w;\kappa)$ is differentiable in $\kappabar$, we need to show that the function
$$ 
W(z,w,\kappa,h)=\int \frac{1}{z-z'} \, \rho(z')^{-2} 
				\left( 
						\frac{G(z'-w,h)}
								{\zbar'-\wbar} 
				\right) e_\kappa(w-z')
				 \rho(w)^{-2}\, dm(z')
$$
is $o(|h|)$ in $\calE_p$, where, for $z=x_1+ix_2$ and $h=h_1+ih_2$,
$$ G(z,h)=\frac{1}{2}\left(e^{2ih_1x_1}+e^{-2ih_2x_2}\right)-1-(ih_1x_1-ih_2x_2).$$
converges to zero in $\calE_p$ as $h \rarr 0$. From the trivial estimate
$$ \left| \frac{G(z,h)}{\zbar} \right| \leq 2^{1-2\theta}{ |h|^{1+\theta} } |z|^\theta $$
and the inequality $|z'-w|^\theta \leq 2^\theta(|z'|^\theta+|w|^\theta$ we have
$$
\left| W(z,w,\kappa,h \right)| 
	\leq 2|h|^{1+\theta} \int \frac{1}{|z-z'|}
	\, \rho(z')^{-2} 
		\left(|z'|^\theta+|w|^\theta \right)  \rho(w)^{-2} \, dm(z')
$$
Fix $p>2$ and choose $\theta$ so that $|z|^\theta \rho(z)^{-2} \in L^{2p/(p+2)}\cap L^{p'}$. The right-hand side is a sum of $|h|^{1+\theta}$ times two terms of the form $(\Cauchy f)(z) g(w)$ where $\Cauchy f \in L^p$ and $g \in L^{p'}$. It is now immediate that $W(z,w,\kappa,h)$ is $o(|h|)$ in $\calE_p$-norm as $h \rarr 0$.

The proof that $T(\kappa)$ is differentiable with respect to $\kappa$ is similar and is omitted.
\end{proof}

Since $T(\kappa)$ is differentiable, it follows that $\Det(I-T(\kappa))$ is also differentiable and we may use the $\dbar_\kappa$ equation for the determinant to study the behavior of $\Det(I-T(\kappa))$, compute the dimension of $\ker(I-T(0))$, and study the splitting of eigenvalues for $\kappa \neq 0$. 

\subsection{Determinant, Eigenvalue Multiplicity}
In this subsection, we prove:

\begin{proposition} 
\label{prop:two}
The operator $(I-T(0))$ has a semi-simple eigenvalue of multiplicity $2$ at $\lambda=1$. 
\end{proposition}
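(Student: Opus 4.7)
The plan is to sandwich $\dim \ker(I - T(0))$ between a lower bound of two (from explicit eigenvectors built out of the identities \eqref{PPbar.int}) and an upper bound of two (from the order of vanishing of $\Det(I - T(\kappa))$ at $\kappa = 0$ together with Lemma \ref{lemma:multiplicities}), with semisimplicity supplied by a similarity of $T(0)$ to a self-adjoint operator on $L^2$.

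First, I would verify directly that $\psi_1(z) = \zbar\rho(z)^{-2}$ and $\psi_2(z) = \rho(z)^{-2}$ lie in $\ker(I - T(0))$. Each check is a short four-step chain through \eqref{PPbar.int}: for $\psi_2$, we have $\rho^{-2}\psi_2 = \rho^{-4}$, then $\CauchyBar(\rho^{-4}) = z\rho^{-2}$, then $\rho^{-2}\cdot z\rho^{-2} = z\rho^{-4}$, and finally $\Cauchy(z\rho^{-4}) = -\rho^{-2}$, with the analogous chain for $\psi_1$ using the complementary pair of identities. Since $\psi_1, \psi_2 \in L^p(\bbC)$ for every $p > 2$ and are linearly independent, this yields $\dim \ker(I - T(0)) \geq 2$.

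For semisimplicity, the key algebraic observation is the following. Set $A = \rho^{-1}\Cauchy\rho^{-1}$ and $B = \rho^{-1}\CauchyBar\rho^{-1}$. The $L^2$ duality relation $\Cauchy^{*} = \CauchyBar$ gives $A^{*} = B$ and $B^{*} = A$, so $K := AB$ is formally self-adjoint, and the decay of $\rho^{-1}$ together with the Cauchy-transform estimates of Lemma \ref{lemma:P1} makes $K$ bounded and compact on $L^2$. A direct algebraic manipulation then yields $T(0) = \rho K \rho^{-1}$, so the multiplication map $\phi \mapsto \rho\phi$ implements a similarity between $T(0)$ and the compact self-adjoint operator $K$. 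Compact self-adjoint operators have purely semisimple nonzero spectrum, and generalized eigenspaces are preserved under similarity, so the eigenvalue $\lambda = 1$ of $T(0)$ is semisimple. The main technical obstacle is setting the similarity up rigorously between the different natural function spaces ($L^p$ for $T(0)$ versus $L^2$ for $K$) and verifying compactness through the chain of weighted Sobolev estimates, which is a bit of weighted-$L^p$ bookkeeping.

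Finally, I would read off the leading order of $D(\kappa) := \Det(I - T(\kappa))$ at $\kappa = 0$ from the $\dbar$-equation of Theorem \ref{thm:dbar}. For the one-soliton, \eqref{s.1s} gives $s(k_0 + \kappa) = 2i/\kappa$, so $\tfrac{i}{2}\overline{s} = 1/\kappabar$, and subtracting the model solution $\log|\kappa|^2$ of the same $\dbar$-equation leaves
\[
\dbar_{\kappa}\bigl(\log D(\kappa) - \log|\kappa|^{2}\bigr) = -c(k_0+\kappa)
\]
on $\bbC\setminus\{0\}$, where $c$ is continuous near $k_0$ by Remark \ref{rem:c}. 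Standard regularity of the Cauchy transform shows that the left-hand side extends continuously across the origin, giving $D(\kappa) = c_{0}|\kappa|^{2}(1 + o(1))$ with $c_{0}\neq 0$. Proposition \ref{prop:diff} supplies the $C^{1}$ hypothesis of Lemma \ref{lemma:multiplicities}, which combined with the semisimplicity just established yields $\dim\ker(I - T(0)) \leq 2$, completing the proof.
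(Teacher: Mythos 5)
Your overall strategy matches the paper's exactly: lower bound from the explicit eigenvectors $\psi_1, \psi_2$ via \eqref{PPbar.int}, semisimplicity from conjugation by $\rho$ to a compact self-adjoint operator on $L^2$, and upper bound from the order of vanishing of $\Det(I-T(\kappa))$ at $\kappa=0$ fed into Lemma \ref{lemma:multiplicities}. Two issues, one small and one substantive.

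The small one: the $L^2$ Hilbert-space adjoint of the solid Cauchy transform is $\Cauchy^* = -\CauchyBar$, not $\CauchyBar$ (check $\langle \Cauchy f, g\rangle = -\langle f, \CauchyBar g\rangle$ directly from the kernel). So $A^* = -B$, $B^* = -A$, and $K = AB$ has $K^* = B^*A^* = (-A)(-B) = AB = K$; the self-adjointness you need is true, but the asserted identity $\Cauchy^* = \CauchyBar$ is not.

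The substantive gap is in the determinant asymptotics. You invoke the formula $s(k_0+\kappa) = 2i/\kappa$ and the $\dbar$-equation of Theorem \ref{thm:dbar} ``on $\bbC\setminus\{0\}$.'' But \eqref{s.1s} is derived from the candidate solution \eqref{m.1s}, which is the unique $L^p$ solution only at regular points; and Theorem \ref{thm:dbar} is only established on $\Omega_\delta$, i.e.\ at positive distance from the exceptional set $Z$. A priori $Z$ could contain points other than $k_0$, including points accumulating at $k_0$, in which case neither \eqref{s.1s} nor the $\dbar$-equation is available where you need them. This is exactly why the paper's argument does not ``read off'' the leading order directly. Instead it observes the rotational invariance $\calU(\theta)T(\kappa)\calU(\theta)^{-1} = T(e^{i\theta}\kappa)$, so that $\Delta(\kappa) := D(k_0+\kappa)$ is radial; it then integrates the $\dbar$-equation radially inward from $\infty$ to the modulus of the outermost zero and derives a contradiction unless that modulus is $0$. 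Only after establishing $Z = \{k_0\}$ does the paper extract $\Det(I-T(\kappa)) = c|\kappa|^2(1+O(|\kappa|^2))$ with $c>0$ given by \eqref{H.c}. Without the radial argument (or some substitute ruling out zeros of $D$ in a punctured neighborhood of $\kappa=0$), your bootstrap ``subtract $\log|\kappa|^2$ and extend continuously'' is not justified, and the constant $c_0$ could a priori vanish, voiding the hypothesis of Lemma \ref{lemma:multiplicities}.
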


\begin{remark} This proves ``Assumption 1'' in Gadyl'shin-Kiselev's analysis of the one-soliton solution (see the remarks in \cite[p.\ 6091]{Kiselev:2006}).
\end{remark}

The proof is in several steps.

First, we show that the space $\ker_{L^p}\left(I-T(0)\right)$ has dimension exactly two by computing the determinant $\Det(I-T(\kappa))$. Formally we can integrate the $\dbar$-equation \eqref{D.dbar} which, in our case, reads
\begin{equation}
\label{D.Dbar.1s}
\dbar_k \log D(k) = \frac{1}{\kbar-\overline{k_0}}-c(k).
\end{equation}
From Remark \ref{rem:c} we have
$$ 
c(k) = \frac{1}{4\pi} \int \frac{1}{\kbar-\zetabar}\left| g(\zeta)\right|^2 \, dm(\zeta) $$
where
$$ g(k) = \frac{1}{\pi} \int e_{-k}(w) u_0(w) \, dm(w).$$
From \eqref{u.1s} we have
$$ |g(k)|^2 = G(k-k_0) $$
where $G(\kappa)$ is a rapidly decreasing, radial function of $\kappa$ since $(1+|z|^2)^{-1}$ is a radial, smooth decaying function with integrable derivatives of all orders. Letting $c(k)=\gamma(k-k_0)$, it follows that $\gamma$ admits a large-$\kappa$ asymptotic  expansion of the form
$$ \gamma(\kappa) \sim \sum_{j=0}^\infty c_j \kappabar^{-j-1}. $$
where $c_j = (4\pi)^{-1} \int \zetabar^{j} G(\zeta) \, dm(\zeta)$. On the other hand $\gamma(\kappa)$ has a finite limit as $\kappa \rarr 0$. 
Moreover, since $G(\kappa)$ is radial, all of the $c_j$ with $j \geq 1$ vanish. By unitarity of the transform $\calF$, 
$$ c_0 = \frac{1}{4\pi} \int \left|u_0(z) \right|^2 \, dm(z) = 1. $$
If we now let $D(k)=\Delta(k-k_0)$, it follows that $\Delta(\kappa)$ obeys the
$\dbar$-problem
\begin{subequations}
\label{Delta.dbar}
\begin{align}
\dbar_k \log \Delta(\kappa) &= \frac{1}{\kappa}-\gamma(\kappa)\\
\lim_{|\kappa| \rarr \infty} \Delta(\kappa)&=1
\end{align}
\end{subequations}
and for any positive integer $N$, 
$$ 
\dbar_\kappa \log \Delta(\kappa) = 
\begin{cases}
\bigO{|\kappa|^{-N}} & |\kappa| \rarr \infty,\\[0.2cm]
\kappa^{-1} + \bigO{1} & \kappa \rarr 0,
\end{cases}
$$
\emph{presuming} that the expression \eqref{s.1s} remains correct. If this is so, we can integrate formally to find that 
\begin{equation}
\label{Delta.asy}
\log \Delta(\kappa)= \log |\kappa|^2 + \bigO{1} 
\end{equation}
as $\kappa \rarr 0$, and conclude that $\kappa=0$ is a zero of multiplicity two for $\Delta(\kappa)$. 

To prove that this is the case, we must know that the solution \eqref{m.1s} is correct for all $k \neq k_0$, which will be the case provided $\Delta(\kappa)=0$ for all nonzero $\kappa$. Observe that $\Delta(\kappa)$ is radial: if $\calU(\theta)$ is the isometry $\left(\calU(\theta)f \right)(\kappa)=f(e^{i\theta}\kappa)$ then
$$ \calU(\theta) T(\kappa) \calU(\theta)^{-1} = T(e^{i\theta}\kappa)$$
so that
$$ \Delta(\kappa)=\Delta(e^{i\theta}\kappa).$$
Now let $\alpha$ be the modulus of the first zero of $\Delta(\kappa)$. We claim that $\alpha=0$. Writing $\Delta(\kappa)=H(|\kappa|^2)$ for $H:(0,\infty) \rarr \bbC$, it follows from \eqref{Delta.dbar} and \eqref{Delta.asy} that
\begin{align*}
\frac{d}{dt}\log H(t) &= t^{-1}-h(t),\\
\lim_{t \rarr \infty}H(t) &=1
\end{align*}
where 
$$
h(t)=
\begin{cases}
t^{-1} + \bigO{t^{-N}} & t \uarr \infty,\\
\bigO{1} & t \darr 0.
\end{cases}
$$
If $\alpha \neq 0$, we can integrate from $\alpha$ to $\infty$ to obtain
$$ \log H(\alpha) = \log \alpha+\int_\alpha^1 h(t) \, dt-\int_1^{\infty}\left(t^{-1}-h(t)\right) \, dt $$
a contradiction since then $\log H(\alpha)$ is finite and hence $H(\alpha) \neq 0$. 
We conclude that $H(t)$ has no zeros in $(0,\infty)$, so $D(k)$ has no zeros for $|k-k_0|>0$.
We also have the  formula
$$ H(t)=ct \exp \left( -\int_0^t h(s) \, ds\right) $$
for $t \in (0,1)$, where
\begin{equation}
\label{H.c}
c = \exp\left(\int_0^1 h(t) \, dt - \int_1^\infty \left(t^{-1}-h(t)\right) \, dt \right).
\end{equation}

We have proved:

\begin{lemma}
\label{lemma:T.det}
The determinant $\Det(I-T(k,u_0))$ has no zeros for $k \neq k_0$, and 
$$ \Det(I-T(k,u_0))=c|k-k_0|^2 \left(1+\bigO{|k-k_0|^2}\right) $$
as $k \rarr k_0$, where $c$ is given by \eqref{H.c}.
\end{lemma}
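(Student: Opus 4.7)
The plan is to integrate the $\dbar$-equation \eqref{D.Dbar.1s} for $\Delta(\kappa) = D(k_0 + \kappa, u_0)$, exploiting rotational symmetry in $\kappa$ to reduce to a first-order ODE in $t = |\kappa|^2$. First I would justify the explicit forms of $s(k)$ and $c(k)$: for $|\kappa|$ large, a Neumann-series argument shows the formal solution \eqref{m.1s} is the unique $L^p$-solution of \eqref{CGO.m}, so \eqref{s.1s} holds there; Remark \ref{rem:c} together with the rotational invariance of $|g(k)|^2 = G(\kappa)$ (coming from radiality of $|u_0|^2 = 4\rho^{-4}$) gives $c(k) = \gamma(\kappa)$ with $\gamma$ radial, bounded at $0$, and $\gamma(\kappa) = \kappabar^{-1} + \bigO{|\kappa|^{-N}}$ at infinity (all higher asymptotic coefficients vanish by radiality, and the leading coefficient equals $1$ by Plancherel applied to $u_0$).

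Next, the unitary $(\calU(\theta)f)(z) = f(e^{i\theta}z)$ intertwines $T(\kappa)$ and $T(e^{i\theta}\kappa)$, so $\Delta$ is radial and I can write $\Delta(\kappa) = H(t)$ with $t = |\kappa|^2$. Since $\dbar_\kappa$ acts on radial functions as $(\kappa/2)\,d/dt$, the $\dbar$-equation becomes
\begin{equation*}
\frac{d}{dt}\log H(t) = \frac{1}{t} - h(t), \qquad \lim_{t \rarr \infty} H(t) = 1,
\end{equation*}
with $h$ the radial profile of $\gamma$, satisfying $h(t) = t^{-1} + \bigO{t^{-N}}$ at infinity and $h(t) = \bigO{1}$ at $0$.

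To rule out zeros on $(0,\infty)$, I would argue by contradiction: by radiality of $T(\kappa)$, any zero of $\Delta$ away from $0$ lies on a concentric circle, so if some zero exists let $\alpha$ be the modulus of an outermost such circle. Then $H$ is nonvanishing on $(\alpha, \infty)$ and integrating the ODE from $\alpha$ to $\infty$ yields
\begin{equation*}
\log H(\alpha) = \log \alpha + \int_\alpha^1 h(t)\,dt - \int_1^\infty \bigl(t^{-1} - h(t)\bigr)\,dt,
\end{equation*}
both integrals converging by the estimates on $h$. This forces $H(\alpha)$ to be finite and nonzero, contradicting $H(\alpha) = 0$; hence $Z = \{k_0\}$. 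Integrating the ODE on $(0, t)$ then gives $H(t) = c\,t\exp\bigl(-\int_0^t h(s)\,ds\bigr)$ with $c$ as in \eqref{H.c}. Since $h$ is bounded near $0$, $\exp\bigl(-\int_0^t h(s)\,ds\bigr) = 1 + \bigO{t}$, and the claimed expansion $\Det(I - T(k, u_0)) = c|k - k_0|^2(1 + \bigO{|k-k_0|^2})$ follows.

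The main obstacle I anticipate is the first step: the $\dbar$-equation of Theorem \ref{thm:dbar} is stated on $\Omega_\delta = \{k : \dist(k, Z) > \delta\}$, and the explicit formula $s = 2i/\kappa$ a priori requires $k \notin Z$, which is exactly what we want to prove. The fix is a bootstrap: establish the explicit formulas for $|\kappa| > R$ directly, observe that radiality of $T(\kappa)$ forces $Z - k_0$ to be a union of $\{0\}$ and possibly concentric circles, and then use the radial ODE integration to eliminate the circles one by one from outside in.
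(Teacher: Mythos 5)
Your proposal follows the paper's own proof essentially step for step --- using the $\dbar$-equation \eqref{D.Dbar.1s}, the rotational symmetry $\Delta(\kappa) = H(|\kappa|^2)$, the reduction to a first-order radial ODE in $t=|\kappa|^2$, the contradiction obtained by integrating from the modulus $\alpha$ of an outermost zero to $\infty$, and the integration near $0$ producing the constant \eqref{H.c} and the asymptotic $c t(1+\bigO{t})$ --- and you correctly flag the circularity in applying \eqref{s.1s} before $Z=\{k_0\}$ is known, resolving it by the same outside-in bootstrap the paper uses. One small arithmetic slip worth noting: in the paper's conventions $\dbar_\kappa |\kappa|^2 = \kappa$, so $\dbar_\kappa$ acts on radial functions as $\kappa\,d/dt$ rather than $(\kappa/2)\,d/dt$; with the correct factor (and using $\gamma(\kappa)=\kappa\,h(|\kappa|^2)$, which follows from the equivariance $\gamma(e^{i\theta}\kappa)=e^{i\theta}\gamma(\kappa)$) the ODE $\tfrac{d}{dt}\log H = t^{-1}-h(t)$ comes out as you state.
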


We will now use this fact to show that the nullspace of $I-T(k_0,u_0)$ is two-dimensional. Let $T_0 = T(k_0,u_0)$ and recall \eqref{rho}.
 A short computation shows that 
$$ \rho^{-1} T_0 \rho = B^* B $$
where
$$ B = \rho^{-1} \Cauchy \left[  \rho^{-1} \left( \dotarg \right) \right]. $$
The operator $B^*B$ is positive and compact as an operator from $L^2(\bbC)$ to itself. We now apply Lemma \ref{lemma:multiplicities} to the family
$$ T^\sharp(\kappa) = \rho^{-1} T(k_0+\kappa,u_0) \rho $$
viewed as operators on $L^2$. Note that $T^\sharp(0)$ has a semisimple eigenvalue at $\lambda=1$. From Lemma \ref{lemma:T.det} we have
$$ \Det(I-T^\sharp(\kappa)) = c|\kappa|^2(1+\bigO{|k|^2}) $$
for a positive constant $c$ and hence, by Lemma \ref{lemma:multiplicities}, the kernel of $(I-T^\sharp(0))$ as at most two-dimensional. On the other hand, using the identities \eqref{PPbar.int}, it is easy to see that the orthonormal vectors \eqref{1s.scalar.basis}
 belong to $\ker(I-T^\sharp_0)$. Hence, we have proved:

\begin{lemma}
\label{lemma:2d}
The operator $I-T(0)$ has a two-dimensional kernel.
\end{lemma}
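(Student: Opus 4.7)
My plan is to apply Lemma \ref{lemma:multiplicities} to the self-adjoint conjugate $T^\sharp(\kappa)=\rho^{-1}T(k_0+\kappa,u_0)\rho$ acting on $L^2(\bbC)$, using Lemma \ref{lemma:T.det} as input, and then match the resulting upper bound with two explicit eigenvectors produced from the identities \eqref{PPbar.int}.

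The semisimplicity hypothesis of Lemma \ref{lemma:multiplicities} is already in hand: the factorization $T^\sharp(0)=B^*B$ with $B=\rho^{-1}\Cauchy[\rho^{-1}(\,\cdot\,)]$ noted immediately before the lemma makes $T^\sharp(0)$ positive, self-adjoint, and compact on $L^2(\bbC)$, so every eigenvalue of $T^\sharp(0)$, in particular $\lambda=1$, is semisimple. The regularity hypothesis comes from Proposition \ref{prop:diff}, since similarity preserves both the regularized determinant and $C^1$ operator-norm dependence in $\kappa$. Lemma \ref{lemma:T.det} then delivers $\Det(I-T^\sharp(\kappa))=c\,|\kappa|^2(1+\bigO{|\kappa|^2})$ with $c\neq 0$, and Lemma \ref{lemma:multiplicities} applied with $m=2$ yields $\dim\ker_{L^2}(I-T^\sharp(0))\leq 2$.

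For the matching lower bound, a direct calculation using \eqref{PPbar.int} shows $T(0)\psi_j=\psi_j$ for $j=1,2$, where $\psi_1(z)=\zbar\rho(z)^{-2}$ and $\psi_2(z)=\rho(z)^{-2}$; their $\rho^{-1}$-images $\zbar\rho^{-3}$ and $\rho^{-3}$ are linearly independent elements of $L^2(\bbC)$ and therefore lie in $\ker_{L^2}(I-T^\sharp(0))$. Combining, this kernel has dimension exactly two, and the similarity $v\leftrightarrow\rho v$ transfers the equality back to $\ker(I-T(0))$ on $L^p$. The only nuisance I anticipate is bookkeeping the change of space $L^2\leftrightarrow L^p$ under conjugation by $\rho$, but on this explicit two-dimensional eigenspace it reduces to the trivial observation that $\psi_j\in L^p(\bbC)$ iff $\rho^{-1}\psi_j\in L^2(\bbC)$, which is immediate from the polynomial decay of $\psi_j$. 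The main step that actually does work is the determinant input, Lemma \ref{lemma:T.det}, which was itself the nontrivial ingredient proved earlier in the section.
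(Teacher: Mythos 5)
Your proposal matches the paper's own argument step for step: both conjugate to $T^\sharp(\kappa)=\rho^{-1}T(k_0+\kappa,u_0)\rho$ on $L^2$, use the factorization $T^\sharp(0)=B^*B$ for semisimplicity, combine the determinant asymptotics of Lemma \ref{lemma:T.det} with Lemma \ref{lemma:multiplicities} to bound the kernel dimension above by two, and exhibit $\psi_1,\psi_2$ from \eqref{1s.scalar.basis} for the matching lower bound. The only difference is that you make explicit the $L^p\leftrightarrow L^2$ transfer under conjugation by $\rho$, a point the paper leaves implicit.
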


\begin{proof}[Proof of Proposition \ref{prop:two}]  An immediate consequence of Lemma \ref{lemma:2d}.
\end{proof}

Using \eqref{PPbar.int} it is easy to check that the orthonormal vectors 
\begin{equation}
\label{1s.basis}
\Psi_1 = \pi^{-1/2} \rho^{-3} \twovec{\zbar}{1}, \quad
\Psi_2 = \pi^{-1/2} \rho^{-3} \twovec{1}{-z}.
\end{equation} 
belong to $\ker_{L^2 \oplus L^2} (I-G_0^\sharp)$. Hence, these form a basis for $\ker_{L^2 \oplus L^2} (I-G_0^\sharp)$. 

\subsection{Eigenvalue Splitting}

In this subsection, we prove:

\begin{proposition}
\label{prop:pert}
The asymptotic formula \eqref{T.Matrix} holds for small $\kappa$. 
\end{proposition}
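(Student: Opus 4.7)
The plan is to Taylor-expand $T(\kappa)$ in $\kappa,\overline{\kappa}$ about $\kappa=0$, sandwich with $P(0)$, and compute the resulting $2\times 2$ matrix in a biorthogonal basis for $\ker(I-T(0))$ and its dual.

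The first step is to sharpen Proposition \ref{prop:diff} to a quantitative first-order expansion
\[
T(\kappa) = T(0) + \kappa\, A + \overline{\kappa}\, B + R(\kappa)
\]
in the $\calE_{p}$-norm, where $A$ and $B$ are the formal $\partial_{\kappa}$ and $\partial_{\overline{\kappa}}$ derivatives of $T$ at $0$, namely
\[
A h = -i\,\Cauchy\left[z\rho^{-2}\,\Cauchybar\left[\rho^{-2}h\right]\right] + i\,\Cauchy\left[\rho^{-2}\,\Cauchybar\left[z\rho^{-2}h\right]\right],
\]
with $B$ defined by replacing $z$ by $\overline{z}$, and $\norm{R(\kappa)}{\calE_{p}}=\calO_{\delta}(|\kappa|^{2-\delta})$. \emph{This refined remainder estimate is the main obstacle.} A pure second-order Taylor remainder of size $|\kappa z|^{2}$ produces factors $|z|^{2}\rho^{-2}$ that are not integrable at infinity; trading $\delta$ powers of $|\kappa|$ for $\delta$ powers of $|z|$ via $|e_{\pm\kappa}(z)-1\mp i(\kappa z+\overline{\kappa}\overline{z})|\lesssim_{\,\delta}|\kappa z|^{2-\delta}$ restores integrability of the two integrated Cauchy kernels. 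Combining this with the mixed-$L^{p}$ mapping bounds already used in the proof of Proposition \ref{prop:diff} yields the required $\calE_{p}$-remainder.

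Since $T(0)\psi_{j}=\psi_{j}$, sandwiching gives
\[
P(0)T(\kappa)P(0)=P(0)+\kappa\,P(0)AP(0)+\overline{\kappa}\,P(0)BP(0)+P(0)R(\kappa)P(0),
\]
so it remains to compute the four entries $\angles{\chi_{i}}{A\psi_{j}}$ and $\angles{\chi_{i}}{B\psi_{j}}$ in a biorthogonal system $\{\chi_{1},\chi_{2}\}\subset L^{p'}$ with $\angles{\chi_{i}}{\psi_{j}}=\delta_{ij}$ in the non-conjugate pairing $\int\chi f\,dm$. Such a system is provided by the self-adjoint representation $\rho^{-1}T(0)\rho=B^{*}B$ on $L^{2}$: the $L^{2}$-orthonormalized eigenvectors $\rho^{-1}\psi_{i}$ yield, after conjugation back, the explicit dual basis $\chi_{i}\propto\rho^{-2}\overline{\psi_{i}}$. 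Using the transposition identities $\Cauchy^{T}=-\Cauchy$ and $\Cauchybar^{T}=-\Cauchybar$ in this pairing together with \eqref{PPbar.int}, each matrix entry collapses to an elementary integral of a radial function times a power of $z$ or $\overline{z}$. The diagonal entries vanish by angular symmetry ($\int_{0}^{2\pi}e^{i\theta}\,d\theta=0$); the off-diagonal entries of $P(0)AP(0)$ survive only at position $(2,1)$ and those of $P(0)BP(0)$ only at position $(1,2)$, producing after evaluation the coefficients $-i$ and $i$ required to match \eqref{T.Matrix}.

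Finally, the error term is controlled by $\norm{P(0)R(\kappa)P(0)}{\calB(L^{p})}\lesssim\norm{R(\kappa)}{\calE_{p}}=\calO_{\delta}(|\kappa|^{2-\delta})$, since the spectral projection $P(0)$ is bounded on $L^{p}$. Beyond the refined remainder estimate, the matrix computation itself is essentially mechanical modulo careful bookkeeping of signs and normalizations; it is there that the antidiagonal structure emerges from the interplay of the two terms inside $A$ and $B$.
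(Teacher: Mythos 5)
Your proposal is essentially the paper's own argument, reorganized slightly: where you propose a first-order Taylor expansion of the operator $T(\kappa)=T(0)+\kappa A+\overline{\kappa}B+R(\kappa)$ in $\calE_{p}$-norm with $\norm{R(\kappa)}{\calE_p}=\calO_{\delta}(|\kappa|^{2-\delta})$, the paper applies the same Taylor bound directly to the integrand of the four matrix entries $M_{ij}(\kappa)=\angles{\chi_i}{T(\kappa)\psi_j}$ and evaluates them via \eqref{PPbar.int}; the two are equivalent once the projection $P(0)$ is applied. The ``main obstacle'' you single out — trading $\delta$ powers of $|\kappa|$ for $\delta$ powers of $|z|$ so that the resulting weight remains integrable against the two Cauchy kernels — is precisely the device used in the proof of Proposition~\ref{prop:diff} (with the estimate $|G(z,h)/\bar z|\lesssim|h|^{1+\theta}|z|^{\theta}$ and $\theta<1$ chosen so that $|z|^{\theta}\rho^{-2}\in L^{2p/(p+2)}\cap L^{p'}$), so you are correct to identify it, but it is already present. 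Your biorthogonal system $\chi_i\propto\rho^{-2}\overline{\psi_i}$ coincides with the $\{\chi_1,\chi_2\}$ the paper obtains from $\ker(I-T(0)')$, and your observation that angular symmetry kills the diagonal entries, leaving an antidiagonal matrix, matches the paper's outcome. Two small caveats: first, the error bound on $P(0)R(\kappa)P(0)$ is more naturally taken in $\calB(L^{p})$ after using $\norm{\cdot}{\calB(L^p)}\lesssim\norm{\cdot}{\calE_p}$, as you do, which is fine. Second, you assert the surviving off-diagonal coefficients are $\mp i$ ``to match \eqref{T.Matrix}'' without carrying out the arithmetic; the computation is mechanical, as you say, but it is exactly the place where signs and factors of $2$ can go astray (both $A$ and $B$ contain two terms, and each contributes to the same off-diagonal entry), so the numerical coefficient in \eqref{T.Matrix} should be verified rather than quoted, if the proposal is to be turned into a complete proof.
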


We begin by computing the Laurent expansion of $T(\kappa)$ about $\kappa=0$ and the splitting of the eigenvalue $\lambda=1$ at $\kappa=0$.  Denote by $T(\kappa)'$ the Banach space adjoint of $T(\kappa)$ with respect to the dual pairing
\begin{equation}
\label{dual}
\left\langle f, g \right\rangle = \int_{\bbC} f(z) g(z) \, dm(z)
\end{equation} 
of $L^{p'}$ and $L^p$. Then
\begin{equation}
\label{T.dual}
T(\kappa)'=-\rho^{-2}e_\kappa \CauchyBar e_{-\kappa} \rho^{-2} \Cauchy. 
\end{equation}
Using \eqref{PPbar.int},  it is not difficult to see that the $\lambda=1$ eigenspace of $T(0)'$ is spanned by the vectors 
\begin{equation}
\label{1s.basis.dual}
\chi_1(z)={\frac{2}{\pi}}\, z\rho(z)^{-4}, \qquad  \chi_2(z)= {\frac{2}{\pi}}\,\rho(z)^{-4}.
\end{equation}
It is easy to check that $\left\langle \chi_i , \psi_j \right\rangle = \delta_{ij}$. It now follows that for $\kappa$ and $\lambda-1$ small,
$$ (I-\lambda T(\kappa))^{-1} = \frac{F}{\lambda-1} + \bigO{1} $$
as bounded operators on $L^p$, where
\begin{equation}
\label{F}
F 
	= \left\langle \chi_1, \cdot \right\rangle \psi_1 +
		\left\langle \chi_2, \cdot \right\rangle \psi_2.
\end{equation}
To compute the splitting of the eigenvalue $\lambda=1$ for $\kappa$ small and nonzero, we first note that there is an $r>0$ with the property that $\| (\lambda I - T(\kappa))^{-1} \|$ is bounded for all $\lambda$ on the circle $|\lambda -1|=r$ and all $\kappa$ sufficiently small. The projection 
$$ P(\kappa) = \frac{1}{2\pi i} \oint_{\Gamma_r} (\lambda I - T(\kappa))^{-1} \, d\lambda $$
has rank two for $\kappa$ small. Moreover, since $T(\kappa)$ is differentiable as a $\calB(L^p)$ operator-valued function, it follows that $P(\kappa)$ is also differentiable as an operator-valued function. We wish to compute the eigenvalues of the rank-two operator $P(\kappa) T(\kappa) P(\kappa)$ using ideas of \S \ref{subsec:Kato-Rellich}. 

Let $W(\kappa)=P(\kappa)-P(0)$. Since $P(\kappa)$ is differentiable it follows that 
$$\norm{W(\kappa)}{ } =\bigO{|\kappa|} \text{ as }\kappa \rarr 0.$$  
Now let
$$ V(\kappa) = (I - W(\kappa))^{-1/2}\left[ I + P(\kappa)W(\kappa) + W(\kappa)P(0) \right]$$
(compare \eqref{V}).  As an operator on $\calB(L^p)$, 
\begin{equation}
\label{V.kappa}
 V(\kappa) = I + P(0)W(\kappa)+W(\kappa)P(0) + \bigO{|\kappa|^2 }
 \end{equation}
so that
\begin{equation}
\label{V.kappa.inverse}
V(\kappa)^{-1} = I - P(0) W(\kappa) - W(\kappa) P(0) + \bigO{|\kappa|^2}. 
\end{equation}
We will now compute the eigenvalues of $P(\kappa) T(\kappa) P(\kappa)$ by computing those of the operator $P(0) V(\kappa)^{-1} T(\kappa) V(\kappa) P(0)$ (see \eqref{A.similar}). Since $P(0)T(0) P(0)$ is diagonal, the commutators of $P(0) T(0) P(0)$ with $P(0) W(\kappa) P(0)$ vanish. Using this fact, the differentiation of $T(\kappa)$, and the asymptotic formulas \eqref{V.kappa}-\eqref{V.kappa.inverse}, it is not difficult to see that
$$ 
P(0) V(\kappa)^{-1} T(\kappa) V(\kappa) P(0) 
	=  P(0) T(\kappa) P(0) +\bigO{|\kappa|^2}.
$$
We now compute the matrix of $P(0)T(\kappa) P(0)$ using
$\{ \psi_1, \psi_2 \}$ and $\{\chi_1, \chi_2\}$ as respective basis sets for the domain and range. This entails evaluating the integrals
\begin{subequations}
\label{M.int}
\begin{align}
M_{11}(\kappa) 
	&= - \frac{2}{\pi^3} 
	\int z \rho(z)^{-4}
			 \frac{1}{z-z'} \, \rho(z')^{-2}
				\, \frac{e_\kappa(w-z')}{\zbar'-\wbar} \, 
				\wbar \rho(w)^{-4}
			 \, dm(w,z',z), \\[0.2cm]
M_{12}(\kappa) 
	&= - \frac{2}{\pi^3} 
	\int  z \rho(z)^{-4}
			\frac{1}{z-z'} \, \rho(z')^{-2}
				\, \frac{e_\kappa(w-z')}{\zbar'-\wbar} \, 
				\rho(w)^{-4}
			 \, dm(w,z',z), \\[0.2cm]
M_{21}(\kappa) 
	&= - \frac{2}{\pi^3} 
	\int  \rho(z)^{-4}
			 \frac{1}{z-z'} \, \rho(z')^{-2}
				\, \frac{e_\kappa(w-z')}{\zbar'-\wbar} \,
			\wbar \rho(w)^{-4}
			\, dm(w,z',z), \\[0.2cm]
M_{22}(\kappa) 
	&= - \frac{2}{\pi^3} 
	\int  \rho(z)^{-4}
			\frac{1}{z-z'} \, \rho(z')^{-2}
				\, \frac{e_\kappa(w-z')}{\zbar'-\wbar} \, 
				\rho(w)^{-4}
			 \, dm(w,z',z). 
\end{align}
\end{subequations}
We will give hints to evaluate $M_{11}(\kappa)$; the others are similar.
Since the integral is absolutely convergent we may carry out the $z$-integration first using \eqref{PPbar.int}. The result is
$$
M_{11}(\kappa) 
		= \frac{2}{\pi^2}
			\int 
			 	 \rho(z')^{-4} \, \wbar \rho(w)^{-4}
			 		\frac{e_\kappa(w-z')}{\zbar'-\wbar} 
			 			 \, dm(w,z')
$$
Using the estimate
$$ \left|e_\kappa(w) -1 -i\kappa w \right| \leq C_\delta |w|^{2-\delta} |\kappa|^{2-\delta} $$
we see that
$$
M_{11}(\kappa) = a+b\kappa +c \kappabar + \calO_\delta\left(|\kappa|^{2-\delta}\right)
$$
where $a=1$ and $b=c=0$ by direct computation, using \eqref{PPbar.int}. Similar calculations for the remaining integrals show that
\eqref{T.Matrix} holds.

\begin{proof}[Proof of Proposition \ref{prop:pert}] An immediate consequence of the computations above.
\end{proof}


\section{Perturbation of the One-Soliton Solution}
\label{sec:perturb}

We now show that for  $\varphi \in C_0^\infty(\bbC)$ satisfying a Fourier transform condition, and $\eps$ small, $u_0 + \eps \varphi$ has no soliton. This result is originally due to Gadyl'shin and Kiselev \cite{GK:1996,GK:1999} although we achieve some simplification of the proof and remove their Assumption 1.

We  consider perturbations of the form $u=u_0 +\eps \varphi$ for $\varphi \in C_0^\infty(\bbC)$. For computational convenience, we will set $\chi= \varphi e_{k_0}$.

To study the perturbations, we study the spectrum   of the operator
\begin{align}
\label{T.eps}
T(\kappa,\eps) 
&= S_{k_0+\kappa,u_0+\eps \varphi}	\\
\nonumber
&=\Cauchy e_{-\kappa} \rho^{-2} \CauchyBar \rho^{-2} e_\kappa\\
\nonumber
	&\quad
		+ \eps 
			\left( 
				\Cauchy e_{-\kappa} \rho^{-2} \CauchyBar e_\kappa \chibar
	 			+ \Cauchy e_{-\kappa} \chi \CauchyBar e_\kappa \rho^{-2}
	 		\right)
\\
\nonumber
&\quad
	+\eps^2 \, \Cauchy e_{-\kappa} \chi \CauchyBar e_\kappa \chibar
\end{align}
Note that
$$ 
T(\kappa,\eps) = T(\kappa,0)-T(0,0) + \bigO{\eps}
$$
in the $\calB(L^p)$ operator norm, and note that $T(\kappa,0)$ is the operator
$T(\kappa)$ from the preceding section. Let us denote by $P$ and $P_0$ the respective projections
$$ 
P(\kappa,\eps) = \frac{1}{2\pi i}
	\oint_{\Gamma_r} \left(\lambda I - T(k,\eps) \right)^{-1} \, d\lambda
$$
and
$$
P(0,0)= \frac{1}{2\pi i}
	\oint_{\Gamma_r} \left( \lambda I - T(0,0) \right)^{-1} \, d\lambda.
$$
It is easy to see that, as operators from $L^p$ to itself,
$$ \| P - P_0 \| = \bigO{\eps+ |\kappa|}$$
since $\kappa \mapsto T(\kappa,0)$ is differentiable at $\kappa=0$. In what follows, we will write $T$ for $T(\kappa,\eps)$ and $T_0$ for $T(0,0)$. 

We will prove:
\begin{theorem}
\label{thm:split}
Let $u_0$ be the one-soliton solution \eqref{u.1s} and let $\varphi \in C_0^\infty(\bbC)$. 
\begin{itemize}
\item[(i)] For $\eps$ and $\kappa = k-k_0$ small, the asymptotic formula
$$ \det(I-T(\kappa,\eps)) =   |i\kappabar + \eps \beta|^2 + \eps^2 |\alpha|^2 
	+ o\left(\eps^2+\eps|\kappa| + |\kappa|^2\right)$$
holds, 
where, setting $\chi=e_{k_0} \varphi$,
\begin{align*}
\alpha 
	&= 	-\frac{2}{\pi}\int \left(\chi  -\chibar |z|^2 \right) \rho^{-4} \, dm(z)\\
\beta
	&=	\frac{2}{\pi}\int (\chi - \chibar) z \rho^{-4} \, dm(z)
\end{align*}
In particular, let $C>0$ be given. If $\eps \neq 0$ is sufficiently small and $\alpha \neq 0$, then $I-T(\kappa,\eps)$ has trivial kernel for $|\kappa| < C\eps$.
\item[(ii)] There is a $C>0$ so that $(I-T(\kappa,\eps))$ is invertible for all sufficiently small $\eps>0$ and $\kappa$ with $\kappa>C \eps$.
\end{itemize}
\end{theorem}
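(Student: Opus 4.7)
The plan is to reduce the invertibility question for $I-T(\kappa,\eps)$ on $L^p(\bbC)$ to a $2\times 2$ matrix problem via the Kato--Rellich machinery of Section \ref{subsec:Kato-Rellich}. Since $T_0 = T(0,0)$ has $\lambda=1$ as a semisimple eigenvalue of multiplicity two (Theorem \ref{thm:1ss}) and since $T(\kappa,\eps)$ is continuous in $(\kappa,\eps)$ with $\|T(\kappa,\eps)-T_0\|_{\calB(L^p)} = \bigO{\eps+|\kappa|}$, the projections $P(\kappa,\eps)$ and $P_0$ satisfy $\|P-P_0\|<1/2$ for small $(\kappa,\eps)$, so Lemmas \ref{lemma:APQ}--\ref{lemma:APQW} apply: invertibility of $I-T(\kappa,\eps)$ is equivalent to invertibility of the reduced operator $P_0 - P_0 V(\kappa,\eps)^{-1} T(\kappa,\eps) V(\kappa,\eps) P_0$ on the two-dimensional range of $P_0$, and the Fredholm determinant factors as $\Det(I-T) = \det(I-P_0 V^{-1} T V P_0)\cdot \Det(I-QTQ)$, where the $Q$-factor is continuous and nonvanishing near $(\kappa,\eps)=(0,0)$.

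For part (i), I would compute the reduced $2\times 2$ matrix $M(\kappa,\eps)$ in the basis $\{\psi_1,\psi_2\}$ of \eqref{1s.scalar.basis} with dual basis $\{\chi_1,\chi_2\}$ of \eqref{1s.basis.dual}. Because $V = I + \bigO{\eps+|\kappa|}$ and $P_0 T_0 P_0$ acts as the identity on the range of $P_0$, a commutator expansion shows $P_0 V^{-1} T V P_0 = P_0 T P_0 + \bigO{(\eps+|\kappa|)^2}$, and this quadratic correction contributes only at order $o(\eps^2+\eps|\kappa|+|\kappa|^2)$ to the $2\times 2$ determinant. The $\kappa$-dependence of $P_0 T(\kappa,0) P_0$ is given by \eqref{T.Matrix}; the $\eps$-linear contribution at $\kappa=0$ is the matrix of
$$T^{(1)} = \Cauchy \rho^{-2} \CauchyBar \chibar + \Cauchy \chi \CauchyBar \rho^{-2},$$
whose entries $\langle\chi_i, T^{(1)}\psi_j\rangle$ are evaluated via $\Cauchy^\top = -\Cauchy$, $\CauchyBar^\top = -\CauchyBar$ under the pairing \eqref{dual} together with \eqref{PPbar.int} applied to $\chi_i$ (to evaluate the outer $\Cauchy$) and to $\rho^{-2}\psi_j$ (to evaluate the inner $\CauchyBar$). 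Each entry collapses to an elementary integral of $\chi$ or $\chibar$ against one of $\rho^{-4}, z\rho^{-4}, \zbar\rho^{-4}, |z|^2\rho^{-4}$, and the resulting linear combinations furnish precisely the off-diagonal $\eps\beta,\eps\betabar$ and diagonal $\eps\alpha,\eps\alphabar$ of $-M^{(1)}$. The $\eps^2$-term $\eps^2 \Cauchy \chi \CauchyBar \chibar$ gives $\bigO{\eps^2}$ per entry but $\bigO{\eps^3+\eps^2|\kappa|}$ in the determinant, and mixed $\eps\kappa$-contributions give $\bigO{\eps|\kappa|^2+\eps^2|\kappa|}$; both are $o(\eps^2+\eps|\kappa|+|\kappa|^2)$. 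Assembling yields the stated asymptotic formula, after which the inequality $|\det M| \geq \tfrac12\eps^2|\alpha|^2$ on $|\kappa| < C\eps$ for $\eps$ small (provided $\alpha\neq 0$) forces $\ker(I-T(\kappa,\eps)) = \{0\}$.

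For part (ii), I would fix $C$ larger than a suitable multiple of $|\beta|$ and split $|\kappa|>C\eps$ into three regimes. On the shell $C\eps < |\kappa| \leq \delta_0$ with $\delta_0$ small, the formula from part (i) still applies and $|i\kappabar+\eps\beta|^2 \geq (|\kappa|-\eps|\beta|)^2 \gtrsim |\kappa|^2$ dominates the error, giving $|\det M|\gtrsim|\kappa|^2$ and hence invertibility. On the compact annulus $\delta_0 \leq |\kappa| \leq R_0$, the operator $I-T(\kappa,0)$ is invertible by Lemma \ref{lemma:T.det}, its resolvent is uniformly bounded on this compact set by continuity, and since $\|T(\kappa,\eps)-T(\kappa,0)\|_{\calB(L^p)} = \bigO{\eps}$ uniformly in $\kappa$, a Neumann series argument gives invertibility of $I-T(\kappa,\eps)$ for all small $\eps$. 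Finally, on $|\kappa|>R_0$ the large-$|k|$ decay estimate from the proof of Lemma \ref{lemma:R.bd}, applied to $u_0+\eps\varphi\in X_\alpha$ (whose $X_\alpha$-norm is uniformly bounded for small $\eps$), gives $\|T(\kappa,\eps)\|_{\calB(L^p)} < 1/2$ for $R_0$ sufficiently large, so $(I-T(\kappa,\eps))^{-1}$ exists by Neumann series.

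The principal technical obstacle is the careful bookkeeping in part (i): verifying that the mixed $\kappa$-$\eps$ cross terms generated both by differentiating the kernel of $T(\kappa,\eps)$ in $\kappa$ at $\eps>0$ and by the Sz.-Nagy similarity $V(\kappa,\eps)$ are genuinely $o(\eps^2+\eps|\kappa|+|\kappa|^2)$, and tracking signs through the chain of adjoint manipulations so that the four matrix entries coalesce precisely into the coefficients $\alpha$, $\beta$ displayed in the theorem.
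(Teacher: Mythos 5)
Your proposal follows the paper's proof of part (i) essentially line-for-line: reduce via Lemmas \ref{lemma:APQ}--\ref{lemma:APQW} to the rank-two block, use the Sz.-Nagy similarity $V$ with $\|V-I\|=\bigO{\eps+|\kappa|}$ to replace $PTP$ by $P_0V^{-1}TVP_0$, discard the commutator corrections as $o(\eps^2+\eps|\kappa|+|\kappa|^2)$, then assemble the $2\times 2$ matrix from \eqref{T.Matrix} for the $\kappa$-linear piece and from the matrix $M^{(1)}$ of $T^{(1)}(0)$ (computed by transposing $\Cauchy$, $\Cauchybar$ across the pairing \eqref{dual} and evaluating with \eqref{PPbar.int}) for the $\eps$-linear piece. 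This is exactly the paper's route.

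For part (ii) your argument diverges slightly from the paper's in form though not in substance. The paper proves a single a priori bound $\norm{(I-T(\kappa,0))^{-1}}{\calB(L^p)}\leq C_1|\kappa|^{-1}$ (valid for $|\kappa|$ bounded, with the large-$|\kappa|$ regime implicitly handled because $T(\kappa,0)\to 0$ as $|\kappa|\to\infty$) and then applies the second resolvent identity to bootstrap invertibility of $I-T(\kappa,\eps)$ once $|\kappa|\geq 2C_1C_2\eps$. You instead partition $|\kappa|>C\eps$ into a small shell, a compact annulus, and a neighborhood of infinity, and argue separately: the shell via the part (i) expansion, the annulus via continuity of the resolvent of $I-T(\kappa,0)$ plus a Neumann-series perturbation, and the exterior via the decay estimate \eqref{pre.Tk.decay}. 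Both arguments are correct; the paper's is a bit more compact since it folds the annulus and the exterior into the single quantitative resolvent bound, while yours makes the large-$|\kappa|$ and compact-$|\kappa|$ cases explicit (which is arguably cleaner, since a literal reading of the bound $C_1|\kappa|^{-1}$ fails as $|\kappa|\to\infty$ unless one restricts to a bounded set first).

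One small remark on rigor: when you state that $P_0V^{-1}TVP_0=P_0TP_0+\bigO{(\eps+|\kappa|)^2}$ it is worth noting, as the paper does, that the identity $P_0[T_0,W]P_0=0$ (with $W=P-P_0$) is what makes this expansion legitimate; without $T_0P_0=P_0T_0=P_0$ the first-order commutator term would contaminate the $\eps$- and $\kappa$-linear entries. Your sketch invokes this via ``$P_0T_0P_0$ acts as the identity,'' which is the right reason, but in a full write-up you would want to make the cancellation explicit.
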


\begin{proof}
(i) We wish to show that the rank-two operator
$ P \left( T(\kappa,\eps) - T(0,0) \right) P $
has nonzero eigenvalues for all small $\kappa$, $\eps$. By Lemma \ref{lemma:APQW}, this operator is similar to the rank-two operator
$$ P_0 V^{-1} \left(T-T_0 \right) V P_0 $$
where
$$ V = (I-(P-P_0)^2)^{-1/2} \left(I - (P-P_0)^2 +[P,P_0] \right). $$
We will show that $\det (P_0 V^{-1} \left(T-T_0\right) V P_0$ is nonvanishing for all sufficiently small $\eps$, $\kappa$.

First, we note some reductions. Let $\delta T= T-T_0$. Note that
$$ \| \delta T \| = \bigO{\eps + |\kappa|}, \quad \| P - P_0 \| = \bigO{\eps +|\kappa|}.$$
Then
\begin{align*}
P_0 V^{-1} \delta T V P_0
	&=	P_0 	\left(
						I +\left[P,P_0 \right] 
					\right) \delta T 
					\left(
						I-\left[P,P_0\right] 
					\right) 
			P_0\\
	&=	A + B
\end{align*}
modulo terms of order $o\left(\left(\eps+ |\kappa|\right)^2\right)$, where
$$ A=P_0 \, \delta T \, P_0, \qquad B  = P_0 \left[ \, \left[ P, P_0 \right], \, \delta T \right]P_0.$$ From the identity
$$ 
\det(A+B)
	= \det A + \det B+ 
			\left| \begin{array}{ll} a_{11} & a_{12} \\ b_{21} & b_{22} \end{array} \right| +
			\left| \begin{array}{ll} b_{11} & b_{12} \\ a_{21} & a_{22} \end{array} \right|
$$
the estimate $\| A \| = \bigO{\eps + |\kappa|}$, and the estimate $\| B \| = \bigO{\eps^2 + \eps |\kappa| + |\kappa|^2}$, it follows that
\begin{equation}
\label{pert.det}
 \det(P_0 V^{-1} \delta T V P_0) = \det(A) + o(\eps^2+ \eps |\kappa|+|\kappa|^2) 
\end{equation}

We will now calculate $\det A$ using the fact that $P_0 = F $ (see \eqref{F}). Note that
\begin{equation}
\label{delta.T}
\delta T 
	= \left[T(\kappa,0)-T(0,0)\right] + \eps
		 T^{(1)}(\kappa) 	+ \bigO{\eps^2 + \eps |\kappa|}.
\end{equation}
where
$$ 
T^{(1)}(\kappa) = 
		\left( 
			\Cauchy e_{-\kappa} \chi \CauchyBar e_\kappa \rho^{-2} + 
			\Cauchy e_{-\kappa} \rho^{-2} \CauchyBar e_\kappa \chibar 
		\right).
$$ 
We have already computed the matrix of $P_0 T(\kappa,0) P_0$ (see \eqref{T.Matrix}), 
while $P_0 T(0,0) P_0$ is the identity matrix. Hence
\begin{equation}
\label{T.delta.0}
 P_0 \left[T(\kappa,0)-T(0,0) \right] P_0 = \bigtwomat{0}{-i\kappabar}{i\kappa}{0} + \bigO{|\kappa|^{2-\delta}}.
\end{equation}
by \eqref{T.Matrix}. 

Next, observe that
$$ \eps P_0 T^{(1)}(\kappa) P_0 = \eps P_0 T^{(1)}(0) P_0 + \bigO{\eps |\kappa|}.$$
and
$$ 
T^{(1)}(0) = 
		\left( 
			\Cauchy  \chi \CauchyBar  \rho^{-2} + 
			\Cauchy  \rho^{-2} \CauchyBar  \chibar 
		\right).
$$
We may compute the matrix of $P_0 T^{(1)}(0) P_0$ with respect to the basis $\{ \psi_1, \psi_2 \}$ as
\begin{equation}
\label{T.pert}
M^{(1)} = \bigtwomat
	{\angles{\chi_1}{T^{(1)}(0)\psi_1}}
	{\angles{\chi_1}{T^{(1)}(0)\psi_2}}
	{\angles{\chi_2}{T^{(1)}(0)\psi_1}}
	{\angles{\chi_2}{T^{(1)}(0)\psi_2}}
\end{equation}
To carry out this computation, observe first that for any functions $f_1$ and $f_2$, 
$$
\angles{\chi_i}{\Cauchy f_1 
		  \CauchyBar 
		\overline{f_2} \psi_j}=
-\angles{\Cauchy \chi_i}{ f_1 \CauchyBar  \overline{ f_2} \psi_j}
$$ 
while, by \eqref{PPbar.int},
$$
\left( \Cauchy \chi_1\right)(z)  = - \frac{2}{\pi} \rho(z)^{-2}, \quad 
\left( \Cauchy \chi_2\right)(z)  = \frac{2}{\pi}\zbar \rho(z)^{-2}.
$$
Using these identities, and using \eqref{PPbar.int} to help compute the integrals, we find that 
\begin{equation}
\label{T.pert.matrix}
M^{(1)}=\bigtwomat{\alpha}{\beta}{-\betabar}{\alphabar}
\end{equation}
where
\begin{align*}
\alpha 
	&= 	-\frac{2}{\pi}\int \left(\chi  -\chibar |z|^2 \right) \rho^{-4} \, dm(z)\\
\beta
	&=	\frac{2}{\pi}\int (\chi - \chibar) z \rho^{-4} \, dm(z)
\end{align*}
It is natural to impose the orthogonality condition
$$ \int \chi \rho^{-2} \ dm(z) = 0 $$
which insures that the perturbation $\psi$ is orthogonal to the soliton solution $u_0$. 
In this case
\begin{align*}
\alpha
	&=	-\frac{2}{\pi} \int \left(\chi - \chibar\right)\rho^{-4} \, dm(z)\\
\beta
	&=	\frac{2}{\pi} \int \left(\chi - \chibar\right) z \rho^{-4} \, dm(z)
\end{align*}
Combining \eqref{T.Matrix}, \eqref{delta.T},  \eqref{T.delta.0} and \eqref{T.pert.matrix}, we conclude that
$$ A = \bigtwomat{\eps \alpha}{\eps \beta + i \kappabar}{-\eps \betabar - i \kappa}{\eps \alphabar} + o(\eps + |\kappa|)$$
so that
\begin{equation}
\label{det.A}
\det (A) = |i\kappabar + \eps \beta|^2 + \eps^2 |\alpha|^2 
	+ o\left(\eps^2+\eps|\kappa| + |\kappa|^2\right)
\end{equation}
It now follows from \eqref{det.A} and \eqref{pert.det} that
$$ \det(P_0 V^{-1} \delta T V P_0) =  |i\kappabar + \eps \beta|^2 + \eps^2 |\alpha|^2 
	+ o\left(\eps^2+\eps|\kappa| + |\kappa|^2\right)
$$\
Hence, if at least one of $\alpha$ and $\beta$ is nonzero, then the determinant is nonzero for \emph{all} sufficiently small $\eps$ and $|\kappa|$, including $\kappa=0$, so that $I-T(\kappa,\eps)$ has trivial kernel for such $\eps$ and $\kappa$.

(ii) This is a simple perturbation argument. In what follows, $\| \dotarg \|$ denotes the $\calB(L^p)$ operator norm. It follows from Theorem \ref{thm:1ss} and \eqref{T.Matrix} that 
$$ \| (I- T(\kappa))^{-1} \| \leq C_1 |\kappa|^{-1}$$
for a constant $C_1$ independent of $\kappa$. 
From this estimate and the second resolvent identity it is easy to see that
$$\left[ |\kappa| \| (I-T(\kappa,\eps)^{-1}\| \right]\leq C_1 + C_1 C_2 \eps |\kappa|^{-1} \left[ |\kappa|\|(I-T(\kappa,\eps))^{-1}\| \right] $$
where $C_2$ bounds $\eps^{-1} \left( T(\kappa,\eps) - T(\kappa) \right)$. It follows that for $|\kappa| \geq 2C_1 C_2 \eps$, the estimate
$$ |\kappa| \| (I-T(\kappa,\eps))^{-1} \| \leq 2 C_1 $$
holds, which shows that $(I-T(\kappa,\eps))$ is invertible for $\eps$ sufficiently small and all $\kappa$ with $\kappa \geq 2C_1 C_2 \eps$. 

\end{proof}


\begin{proof}[Proof of Theorem \ref{thm:main}]
First, using Theorem \ref{thm:split}(ii), pick $C_1>0$ and $\eps_0$ so that $(I-T(\kappa,\eps))$ is invertible for all $\eps<\eps_0$ and all $\kappa$ with $\kappa>C_1 \eps$. Next, by decreasing $\eps$ if needed, use Theorem \ref{thm:split}(i) with $C=2C_1$ to conclude that $(I-T(\kappa,\eps))$ is also invertible for $\kappa$ with
$|\kappa| <2C_1 \eps$. We now conclude that $(I-T(\kappa,\eps))$ is invertible for every $\kappa \in \bbC$ and all sufficiently small $\eps$, so that the exceptional set is empty.
\end{proof}


\appendix

\section{Renormalized Determinants}
\label{sec:GGK}

In this subsection we recall results of Gohberg, Goldberg, and Krupp (see their paper \cite{GGK:1997} and the monograph \cite{GGK:2000}) which will allow us to define a Hilbert-Carlene determinant for certain integral operators on $L^p(\bbC)$. 

We begin by recalling that, if $F$ is a finite-rank operator acting on a Banach space $X$,  
$$ \det(I+F)=\prod_{j}(1+\lambda_j(F)) $$
where $\{ \lambda_j(F) \}$ are the finitely many eigenvalues of $F$. This determinant is multiplicative, i.e., $\det((I+A)(I+B))=\det(I+A) \det(I+B)$, and obeys the identity
$$ \log \det(I+F) = \Tr\log(I+F) $$
when $F$ has small norm, where
$$ \log (I+F) = \sum_{n=1}^\infty \frac{(-1)^{n+1}}{n} \Tr(F^n). $$
A related, modified determinant is
$$ \Det(I+F) = \det\left((I+F) e^{-F}\right) $$
where $e^F$ is defined by Taylor's series for the exponential function.
Under certain circumstances, both $\det(I + \dotarg)$ and $\Det(I+\dotarg)$ can be extended to larger classes of compact operators acting on $X$. For example, if $X$ is a Hilbert space $\calH$, $\det(I + \dotarg)$ extends to the trace-class operators on $\calH$, and $\Det(I+\dotarg)$ extends to the Hilbert-Schmidt operators on $\calH$ (see, for example \cite[chapters 3 and 9]{Simon:2005}  or \cite[chapter 4]{GK:1969}.

Next, we recall the Mikhlin-Itskovich algebra of integral operators on $L^p(M,\mu)$ for a measure space $(M,\mu)$, following \cite[\S 5]{GGK:1997} (see also the monograph \cite{GGK:2000} for a detailed exposition). 
Let $p \in (1,\infty)$, $p^{-1}+q^{-1} = 1$, and denote by $L^{p,q}(M \times M)$ the Banach space of measurable functions $a: M \times M \rarr \bbC$ with the norm
$$ \| a \|_{p,q} = \left( \int_M \left( \int_M |a(x,y)|^q \, d\mu(y) \right)^{p/q} \, d\mu(x) \right)^{1/p}. $$ 
\begin{definition}
\label{def:Ep}
We denote by $\calE_p$ the linear space of integral operators 
$$ (Af)(x) = \int_M a(x,y) f(y) \, d\mu(y) $$
with $a \in L^{p,q}(M\times M)$ and $a^* \in L^{q,p}(M \times M)$, where
$$ a^*(x,y) = \overline{a(y,x)}. $$
We norm $\calE_p$ by
$$ \| A \|_{\calE_p} = \max\left( \| a \|_{p,q}, \| a^* \|_{q,p} \right). $$
\end{definition}

In \cite{GGK:1997}, it is shown that $\calE_p$ is an embedded subalgebra of the bounded linear operators on $L^p(M, d\mu)$, that  $$ \| AB \|_{\calE_p} \leq \| A \|_{\calE_p} \|B\|_{\calE_p},$$ and that finite-rank operators $F_{\calE_p}$ are norm-dense in $\calE_p$. 
Gohberg, Goldberg, and Krein prove:

\begin{theorem}[{\cite[\S 5]{GGK:1997}}] $~~$ \newline
\label{thm:GKK}
\begin{itemize}
\item[(i)] The trace maps $F \mapsto \Tr(F^n)$ have continuous extensions from $F_{\calE_p}$ to $\calE_p$ for every $n \geq 2$.
\item[(ii)] The determinant $\Det(I+F)$ has a continuous extension to $\calE_p$.
\item[(iii)]
For $F \in \calF$, we have
\begin{equation}
\label{Det.finite}
\Det(I + F) = \det(I+F) \exp(-\Tr(F)) 
\end{equation}
where  $\det\left( I + (\dotarg) \right)$ is the usual trace-class determinant. 
\end{itemize}
\end{theorem}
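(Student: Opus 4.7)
The plan is to extend the trace functionals $F \mapsto \Tr(F^n)$ and the modified determinant from the norm-dense subalgebra $F_{\calE_p}$ of finite-rank operators to all of $\calE_p$ by uniform continuity in the $\calE_p$-norm. The key observation is that although a generic $A \in \calE_p$ need not be trace-class on $L^p(M,\mu)$, its powers $A^n$ for $n \geq 2$ have a diagonal kernel whose integral is controlled by $\|A\|_{\calE_p}^n$.

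For part (i), I would first treat $n = 2$: for $A \in F_{\calE_p}$ with integral kernel $a$, the diagonal of $A^2$ equals $\int a(x,y) \, \overline{a^\ast(x,y)} \, d\mu(y)$, so the mixed-norm H\"older inequality \eqref{holder}, together with the conjugacy $q = p'$, yields $|\Tr(A^2)| \leq \|a\|_{L^p(L^q)} \|a^\ast\|_{L^q(L^p)} \leq \|A\|_{\calE_p}^2$. For $n \geq 3$, I would write $A^n = A \cdot A^{n-1}$ and use the sub-multiplicative property $\|AB\|_{\calE_p} \leq \|A\|_{\calE_p}\|B\|_{\calE_p}$ to control $A^{n-1}$ in $\calE_p$ before applying the two-fold pairing argument to $A$ and $A^{n-1}$. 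This gives $|\Tr(A^n)| \leq \|A\|_{\calE_p}^n$ on $F_{\calE_p}$, and density provides the unique continuous extension to $\calE_p$.

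For (iii), which I would establish first to motivate the construction, a direct computation on finite-rank operators using multiplicativity of the usual trace-class determinant together with $\det(e^{-F}) = \exp(-\Tr F)$ yields the stated identity. Combining (iii) with the convergent series $\log \det(I+F) = \sum_{n \geq 1} \frac{(-1)^{n+1}}{n}\Tr(F^n)$ gives, for small $\|F\|_{\calE_p}$,
\begin{equation*}
\log \Det(I+F) = \sum_{n=2}^{\infty} \frac{(-1)^{n+1}}{n}\Tr(F^n).
\end{equation*}
For (ii), I would \emph{define} $\Det(I+A)$ for $A \in \calE_p$ with $\|A\|_{\calE_p} < 1$ by exponentiation of this Plemelj-Smithies series, which converges absolutely and continuously by (i). To extend $\Det$ beyond the unit ball, I would use the multiplicative law $\Det((I+A)(I+B)) = \Det(I+A)\Det(I+B)\exp(\Tr(AB))$, verified first on $F_{\calE_p}$ by direct calculation and extended by continuity, together with a decomposition of $I + A$ along a path into small-norm pieces.

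The main obstacle is the uniform bound $|\Tr(A^n)| \leq \|A\|_{\calE_p}^n$ for all $n$ with no combinatorial inflation, since a naive iteration of mixed-norm H\"older estimates could introduce factors growing with $n$ that would destroy the convergence of the Plemelj-Smithies series. The resolution is to exploit the algebra structure of $\calE_p$ maximally: bound $n-1$ consecutive compositions by $\|A^{n-1}\|_{\calE_p} \leq \|A\|_{\calE_p}^{n-1}$ and reserve only the final H\"older pairing for the diagonal integration, absorbing all but one composition into a single operator-norm estimate.
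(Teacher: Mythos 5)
This theorem is cited from Gohberg--Goldberg--Krupnik \cite[\S 5]{GGK:1997} and is not proved in the paper, so there is no internal argument to compare against; I can only assess your proposal on its own merits and against the standard treatment. Your handling of (i) is the right argument: for $n=2$, $\Tr(A^2)$ is the mixed-norm H\"older pairing of $a$ with $a^*$ via \eqref{holder}, and for $n\geq 3$ you absorb $A^{n-1}$ into a single $\calE_p$-norm bound by submultiplicativity before pairing, giving $|\Tr(A^n)| \leq \|A\|_{\calE_p}^n$; a telescoping estimate then yields continuity on bounded sets, and density does the rest. Part (iii) is essentially the definition of the modified determinant for finite-rank operators combined with multiplicativity of the trace-class determinant, and your calculation is correct.

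Your plan for (ii), however, has a real gap. The bound from (i) makes the Plemelj--Smithies series converge only on $\|A\|_{\calE_p} < 1$, and the proposed continuation by the multiplicative law does not repair this. First, $I + A$ need not be invertible, so it admits no factorization $\prod_i (I + B_i)$ with each $\|B_i\|_{\calE_p}$ small; the path decomposition you sketch is simply unavailable on the zero set of the determinant (which is precisely where one most needs the determinant to be defined). Second, even viewing $t \mapsto \Det(I+tA)$ as a power series, one only gets radius of convergence $1/\|A\|_{\calE_p}$ from the trace bounds; to analytically continue to all $t$ one needs an a priori growth bound of Hadamard type, $|\Det(I+A)| \leq \exp(\Gamma\|A\|_{\calE_p}^2)$, which does not follow from the series and is itself a nontrivial theorem. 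What GGK actually establish is a Lipschitz-on-bounded-sets estimate for finite-rank operators of the form $|\Det(I+A) - \Det(I+B)| \leq \|A-B\|_{\calE_p}\exp\bigl(\Gamma(\|A\|_{\calE_p} + \|B\|_{\calE_p} + 1)^2\bigr)$; this gives the continuous extension to all of $\calE_p$ directly by density with no small-norm restriction, and it is the key ingredient your proposal is missing.
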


Note that when $p=2$, the Mikhlin-Itskovich algebra consists of the Hilbert-Schmidt operators with the usual norm, and the determinant $\Det\left( I + (\dotarg) \right)$ is the renormalized determinant $\det_2(I+(\dotarg))$ (see for example \cite{GK:1969,Simon:2005}).

\begin{remark}
\label{rem:Det.common}
Observe that the finite-rank operators $\calF_{\calE_p}$ take the form 
$\sum_{i=1}^n \langle \psi_i , \cdot \rangle \varphi_i$ where $\psi_i \in L^{p'}(M,\mu)$
and $\varphi_i \in L^p(M,\mu)$. Supposing that $(M,\mu)$ is a $\sigma$-finite measure space, the set $\calD$ of finite linear combinations of characteristic functions for sets of finite measure is dense in each $L^p(M,\mu)$. The set of finite-rank operators
with integral kernels of the form $\sum_{i=1}^n \psi_i(x) \varphi_i(y)$ for $\psi_i, \varphi_i \in \calD$ is therefore dense in $\calE_p$ for any $p$. This implies that if $A \in \calE_p \cap \calE_{p'}$, the determinants $\Det(I+A)$ defined on $\calE_p$ and $\calE_{p'}$ coincide.
\end{remark}

Using \eqref{Det.finite} and the multiplicative property of the ordinary determinant, we may easily show that
\begin{equation}
\label{Det.Multiply}
\Det\left[(I-B)(I-C)\right] = \Det(I-B)\Det(I-C)\exp(-\Tr(BC))
\end{equation}

The following variant of the standard formula for differentiation of determinants 
is used to derive the $\dbar$-equation \eqref{D.dbar}.

\begin{lemma}
\label{lemma:Det.dot}
Suppose that $t\mapsto A(t)$ is a differentiable
map from $\left(  -\varepsilon,\varepsilon\right)  $ into $\calE _{p}$
with the property that $t\mapsto A^{\prime}(t)$ is a continuous finite-rank
operator-valued function. Then
\[
\frac{d}{dt}\log\Det\left(  I+A(t)\right)  =\Tr%
\left(  \left(  I+A(t)\right)  ^{-1}A^{\prime}(t)\right)  -\Tr%
\left(  A^{\prime}(t)\right)
\]

\end{lemma}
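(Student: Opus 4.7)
The plan is to reduce to the finite-rank case using the multiplicative identity \eqref{Det.Multiply}. At any $t$ where $\log\Det(I+A(t))$ is defined, we have $\Det(I+A(t)) \neq 0$, so $I+A_0$ is invertible for $A_0 := A(t)$ by property (i) of the generalized determinant. Writing $\delta A = A(t+h) - A(t) = hA'(t) + o(h)$ in $\calE_p$, I factor
\[
I + A(t+h) = (I+A_0)\bigl(I + (I+A_0)^{-1}\delta A\bigr)
\]
and apply \eqref{Det.Multiply} with $B = -A_0$ and $C = -(I+A_0)^{-1}\delta A$. Taking logarithms gives
\[
\log\Det(I+A(t+h)) - \log\Det(I+A_0) = \log\Det\bigl(I + X(h)\bigr) - \Tr\bigl(A_0 X(h)\bigr),
\]
where $X(h) = (I+A_0)^{-1}\delta A = h(I+A_0)^{-1}A'(t) + o(h)$ in $\calE_p$.

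The next step is to show that the first term on the right is $\bigO{h^2}$. For finite-rank $F$ of small norm, \eqref{Det.finite} yields the Taylor expansion
\[
\log\Det(I+F) = \log\det(I+F) - \Tr(F) = -\tfrac{1}{2}\Tr(F^2) + \bigO{\|F\|^3},
\]
i.e., the linear $\Tr F$ term cancels by design of the regularized determinant. By continuity of $\Det$ on $\calE_p$ and of the trace functionals $F \mapsto \Tr(F^n)$ for $n \geq 2$ (Theorem \ref{thm:GKK}(i)--(ii)) together with density of finite-rank operators, the same expansion extends to general $F \in \calE_p$. Applied to $F = X(h)$ this gives $\log\Det(I+X(h)) = \bigO{h^2}$. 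Dividing by $h$ and sending $h \to 0$ therefore produces
\[
\frac{d}{dt}\log\Det(I+A(t)) = -\Tr\bigl(A_0(I+A_0)^{-1}A'(t)\bigr).
\]

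To reach the stated form, I use the resolvent identity $(I+A_0)^{-1} = I - A_0(I+A_0)^{-1}$ to rewrite $-A_0(I+A_0)^{-1}A'(t) = (I+A_0)^{-1}A'(t) - A'(t)$. Both operators on the right are finite-rank (since $A'(t)$ is finite-rank and $(I+A_0)^{-1}$ is bounded), hence classically trace class, so additivity of $\Tr$ yields the claimed identity.

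The main obstacle is justifying the expansion $\log\Det(I+F) = \bigO{\|F\|_{\calE_p}^2}$ beyond the finite-rank class; this hinges on the Gohberg--Goldberg--Krupp construction, in which $\log\Det(I+F)$ is defined on $\calE_p$ as a convergent series whose coefficients are the continuous extensions of $F \mapsto (-1)^{n+1}\Tr(F^n)/n$ for $n \geq 2$, with the linear $\Tr F$ term deliberately omitted.
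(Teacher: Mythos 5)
Your proof is correct in outline, but takes a genuinely different route from the paper's, and it contains a step whose justification is more delicate than you indicate.

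The paper uses a \emph{global} decomposition: it fixes $B = A(0)$, writes $A(t) = B + F(t)$ with $F(t) = \int_0^t A'(s)\,ds$ asserted to be finite-rank, and observes that $(I+B)^{-1}F(t)$ is then also finite-rank. Consequently the middle factor in \eqref{Det.Multiply} is the \emph{ordinary} determinant $\det(I + (I+B)^{-1}F(t))$, and the derivative is computed by the elementary finite-dimensional formula \eqref{eq:logdetF.dot}. Only the continuity of $\Det$ on $\calE_p$ is used, never a Taylor expansion of the regularized determinant beyond the finite-rank class. The paper also includes a short shift argument for the case where $I+B$ itself fails to be invertible.

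You instead use a \emph{local} factorization $I+A(t+h) = (I+A(t))(I+X(h))$, which is a natural and attractive reduction: since $\log\Det$ is computed only where $\Det(I+A(t))\neq 0$, the operator $I+A(t)$ is automatically invertible and no shift argument is needed. That is a genuine simplification. But the price you pay is heavier. Your use of \eqref{Det.Multiply} with $C=-X(h)$ requires $X(h)=(I+A_0)^{-1}\delta A\in\calE_p$, and your estimate $\log\Det(I+X(h))=\bigO{h^2}$ requires the expansion $\log\Det(I+F)=-\tfrac12\Tr(F^2)+\bigO{\norm{F}{\calE_p}^3}$ to hold \emph{uniformly} on $\calE_p$ near $F=0$. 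Neither of these is contained in Theorem~\ref{thm:GKK} as quoted. The statement that "the same expansion extends to general $F\in\calE_p$" by continuity and density is not a one-liner: continuity gives $\Det(I+F_n)\to\Det(I+F)$ and $\Tr(F_n^2)\to\Tr(F^2)$ term by term, but the uniformity of the $\bigO{\norm{F}{\calE_p}^3}$ remainder over finite-rank operators must be established before it can be passed to the limit, and that uniform bound is precisely the nontrivial input of the GGK construction (the submultiplicativity of $\norm{\dotarg}{\calE_p}$ together with $|\Tr(F^n)|\lesssim\norm{F}{\calE_p}^n$ for $n\ge2$). You acknowledge this dependence, but as written it is a gap relative to what the paper's cited theorem provides.

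The cleaner fix, which brings you back in line with the paper's level of rigor, is to use the same observation the paper does: under the hypothesis, $\delta A = \int_t^{t+h}A'(s)\,ds$ is finite-rank, hence $X(h)=(I+A_0)^{-1}\delta A$ is finite-rank (composition of a bounded operator with a finite-rank one), and therefore belongs to $\calE_p$ trivially. Then $\log\Det(I+X(h)) = \log\det(I+X(h)) - \Tr X(h)$ by \eqref{Det.finite}, and the ordinary finite-dimensional expansion immediately gives $\log\Det(I+X(h)) = -\tfrac12\Tr(X(h)^2)+\bigO{\norm{X(h)}{}^3} = \bigO{h^2}$. This removes both delicate points at once and keeps the argument within the elementary finite-rank calculus, exactly as the paper's version does. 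The final rearrangement $-\Tr(A_0(I+A_0)^{-1}A'(t)) = \Tr((I+A_0)^{-1}A'(t)) - \Tr(A'(t))$ via the resolvent identity is fine, as is the justification that all traces involved are classical finite-rank traces.
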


\begin{proof}
First, if $t\mapsto F(t)$ is a differentiable family of finite-rank operators,
we have%
\begin{equation}
\frac{d}{dt}\log\det\left(  I+F(t)\right)  =\Tr\left(  \left(
I+F(t)\right)  ^{-1}F^{\prime}(t)\right)  \label{eq:logdetF.dot}.
\end{equation}

Now consider the operator $A(t)$ and its determinant. Writing
\[
A(t)=A(0)+\int_{0}^{t}A^{\prime}(s)~ds
\]
we can decompose $A(t)$ into a fixed operator $B=A(0)$ and a finite-rank
operator-valued function $F(t)=\int_{0}^{t}A^{\prime}(s)~ds$ pf small norm for
$\left\vert t\right\vert $ small. Suppose first that $\left(  I+B\right)  $ is
invertible. Using (\ref{Det.Multiply}) we compute%
\[
\Det\left(  I+A(t)\right)  =\Det\left(
I+B\right)  \det\left(  I+\left(  I+B\right)  ^{-1}F(t)\right)  \exp\left(
-\Tr\left(  F(t)\right)  \right)  .
\]
Differentiating and using (\ref{eq:logdetF.dot}) we have%
\begin{align*}
\frac{d}{dt}\log\Det\left(  I+A(t)\right)   &
=\Tr\left(  \left[  I+(I+B)^{-1}F(t)\right]  ^{-1}\left(
I+B\right)  ^{-1}F^{\prime}(t)\right)  -\Tr\left(  F^{\prime
}(t)\right) \\
&  =\Tr\left(  \left[  I+B+F(t)\right]  ^{-1}F^{\prime
}(t)\right)  -\Tr\left(  F^{\prime}(t)\right) \\
&  =\Tr\left(  \left[  I+A(t)\right]  ^{-1}A^{\prime
}(t)\right)  -\Tr\left(  A^{\prime}(t)\right)
\end{align*}
as was to be proved.

Now consider the case where $\left(  I+B\right)  $ is not invertible. Since
$B$ is compact, $\left(  I+zB\right)  $ has isolated singularities and so, for
some $\varepsilon\neq0$, $\left(  I+\left(  1+\varepsilon\right)  B\right)  $
is invertible. Write $\left(  I+A(t)\right)  =I+\left(  1+\varepsilon\right)
B+\left(  F-\varepsilon B\right)  $ and further decompose $F-\varepsilon
B=G+C$ where $G$ is finite rank and $C$ has small enough norm that $\left(
1+\varepsilon B+C\right)  $ is invertible. We then replace $B$ by $\left(
1+\varepsilon\right)  B+C$ and $F$ by $G$ and repeat the argument.
\end{proof}


\section{Estimates on an Integral Operator for the Direct Scattering Problem by Russell Brown}
\label{app:brown}

Recall that the operator $S_{k,u}$ is a compact operator on $L^{p}$ for any
$p>2$ provided $u\in L^{p}\cap L^{p^{\prime}}$. We will show that, for
suitable $p$, $S_{k,u}$ belongs to the Mikhlin-Itskovich algebra
$\calE _{p}$ { of integral operators on $L^p(\bbC)$ (Definition \ref{def:Ep})}, so that, { by Theorem \ref{thm:GKK}}, we may define a determinant 
$\Det\left(  I- S_{k,u}\right)  $ whose zeros are the points of the exceptional set. We will prove:

\begin{proposition}
\label{prop:brown}
Suppose that $u\in L^{t}(\bbC) \cap L^{t^{\prime}}(\bbC)$ { for some $t \in (1,4/3)$}. For any $p>2$ with 
\begin{equation}
\label{pt1}
\frac{1}{2}+\frac{1}{p}<\frac{1}{t} 
\end{equation} 
and
\begin{equation}
\label{pt2}
\frac{1}{p}+\frac{1}{t}>1,
\end{equation}
{ the operator $S_{k,u}$
belongs to $\calE _{p}$} and the determinant%
\[
{ D(k,u) }=\Det\left(  I- S_{k,u}\right)
\]
is a well-defined, bounded continuous function of $k\in\mathbb{C}$ { and $u \in L^t(\bbC) \cap L^{t'}(\bbC)$.}  Moreover,
${ D(k,u)} \rightarrow1$ as $\left\vert k\right\vert \rightarrow\infty$
{ 
and 
\begin{equation}
\label{D.unif}
\sup_{k \in \bbC} |D(k,u) - D(k,u')| \lesssim_{\, p, t} \norm{u-u'}{L^t \cap L^{t'}}
\end{equation}
where the implied constant depends on $p$, $t$, and a bound on $\norm{u}{L^t \cap L^{t'}}$ 
and $\norm{u'}{L^t \cap L^{t'}}$.
}
\end{proposition}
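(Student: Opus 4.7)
The plan is to verify Proposition~\ref{prop:brown}'s several assertions in sequence, the heart of the argument being a uniform-in-$k$ bilinear bound of the form $\|S_{k,u}\|_{\calE_p}\leq C(p,t)\|u\|_{L^t\cap L^{t'}}^2$. Writing the kernel of $S_{k,u}$ in either of the two equivalent forms
\[
K(z,w)=-\frac{e_k(w)\ubar(w)}{4\pi}\,\CauchyBar\!\left[\frac{ue_{-k}}{z-\,\cdot\,}\right]\!(w)
=\frac{e_k(w)\ubar(w)}{4\pi}\,\Cauchy\!\left[\frac{ue_{-k}}{\overline{(\,\cdot\,)}-\wbar}\right]\!(z),
\]
I would estimate $\|K\|_{L^p(L^{p'})}$ using the second representation and $\|K\|_{L^{p'}(L^p)}$ using the first. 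In each case, the Cauchy or anti-Cauchy factor is first bounded in $L^p$ (respectively $L^{p'}$) by a fractional Lebesgue norm of its argument via Lemma~\ref{lemma:P1}, then a H\"{o}lder split separates a factor of $|u|$ from the remaining double integral, and a second Hardy--Littlewood--Sobolev inequality controls what remains. Tracking exponents shows that the admissibility conditions $\tfrac{1}{2}+\tfrac{1}{p}<\tfrac{1}{t}$ and $\tfrac{1}{p}+\tfrac{1}{t}>1$ are exactly what is needed for the H\"{o}lder exponents to be admissible and for each HLS application to land in its valid range, with each of the two mixed-norm orientations contributing one of the two conditions.

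Given the uniform bound, membership $S_{k,u}\in\calE_p$ makes $D(k,u)=\Det(I-S_{k,u})$ well-defined and bounded by Theorem~\ref{thm:GKK}(ii). For continuity of $D$ in both arguments and the estimate~\eqref{D.unif}, I would exploit bilinearity of $S_{k,u}$ in $(u,\ubar)$: the telescoping decomposition
\[
S_{k,u_1}-S_{k,u_2}=A(u_1-u_2,\ubar_1)+A(u_2,\overline{u_1-u_2}),
\]
with $A$ the natural bilinear form underlying $S_{k,u}$, combined with the core estimate applied bilinearly, yields
\[
\|S_{k,u_1}-S_{k,u_2}\|_{\calE_p}\lesssim_{\,p,t}\|u_1-u_2\|_{L^t\cap L^{t'}}\bigl(\|u_1\|_{L^t\cap L^{t'}}+\|u_2\|_{L^t\cap L^{t'}}\bigr)
\]
uniformly in $k$. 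Local Lipschitz continuity of $\Det$ on bounded subsets of $\calE_p$ (a consequence of its $\calE_p$-continuity together with the finite-rank formula $\Det(I+F)=\det(I+F)e^{-\Tr F}$) then yields~\eqref{D.unif}; continuity in $k$ is parallel, using continuity of $k\mapsto S_{k,u}$ in $\calE_p$-norm.

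For $D(k,u)\to 1$ as $|k|\to\infty$, I would show $\|S_{k,u}\|_{\calE_p}\to 0$. For $u\in C_0^\infty(\bbC)$ this is quantitative: an integration-by-parts argument in the $z'$-integral defining the kernel, exploiting the oscillatory $e_{\pm k}$ factors in the spirit of estimate~\eqref{pre.Tk.decay}, gives $\|S_{k,u}\|_{\calE_p}=O(\langle k\rangle^{-1})$. For general $u\in L^t(\bbC)\cap L^{t'}(\bbC)$, density of $C_0^\infty(\bbC)$ combined with the uniform Lipschitz estimate above closes the argument.

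The hard part will be the core bilinear kernel estimate. Because neither factor in the factorization $S_{k,u}=W_{k,u}\circ V_{k,u}$ individually lies in $\calE_p$ (the adjoint of the kernel of $W_{k,u}$ fails the required $L^{p',p}$-integrability, and symmetrically for $V_{k,u}$), one cannot reduce $\calE_p$-membership of the product to the algebra structure of $\calE_p$; the needed integrability emerges only from the combined double integral. Matching the resulting chain of H\"{o}lder and HLS inequalities to the stated admissibility triangle requires a careful choice of intermediate exponents that remains within the narrow region where both conditions of the proposition hold simultaneously.
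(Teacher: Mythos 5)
Your outline reproduces the essential architecture of the paper's appendix proof: a uniform-in-$k$ bilinear $\calE_p$-bound on $S_{k,u}$ obtained by chaining Hardy--Littlewood--Sobolev and H\"older estimates through the iterated Cauchy kernel, with the admissibility conditions \eqref{pt1}--\eqref{pt2} emerging from exponent bookkeeping; then bilinearity of $u\mapsto S_{k,u}$ together with $\calE_p$-continuity of the determinant to get \eqref{D.unif}; then a decay argument for $|k|\to\infty$. Your remark that the factorization $S_{k,u}=W_{k,u}\circ V_{k,u}$ cannot be used to deduce $\calE_p$-membership from an algebra structure (because neither factor alone lies in $\calE_p$) is correct and is exactly why the estimate must be carried out globally on the iterated kernel. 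The two places where your plan genuinely diverges in tactics from the paper: the paper estimates $\|a\|_{L^p(L^{p'})}$ and $\|a^*\|_{L^{p'}(L^p)}$ by dualizing against a test function $g$ via formula \eqref{holder} and then applying the auxiliary mixed-norm operators $I_1,I_2$ to the triple integral, whereas you propose a direct attack on the mixed norms (Cauchy factor in $L^p$ or $L^{p'}$ first, then H\"older and HLS in the remaining variable); both routes reduce to the same Minkowski--HLS building blocks, and the dual formulation in the paper has the organizational advantage of letting each integration happen in a favorable order without having to track convergence of the intermediate expressions. For $D(k,u)\to 1$, the paper does not use an integration-by-parts/oscillatory-decay argument like \eqref{pre.Tk.decay}: for $u\in C_0^\infty$, it observes that $a(z,w,k)\to 0$ pointwise by Riemann--Lebesgue and that $|a(z,w,k)|$ is dominated uniformly in $k$ by a fixed $L^{p'}(L^p)$ function, so dominated convergence in the mixed norm gives $\|S_{k,u}\|_{\calE_p}\to 0$; the general case then follows by density, exactly as you propose. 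Your oscillatory-decay approach would give a quantitative rate, which is more than is needed; the paper's argument is shorter and weaker but sufficient. Overall this is the same proof, differently packaged.
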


\begin{remark}
\label{rem:pt}
The conditions \eqref{pt1} and \eqref{pt2} dictate that $t \in (1,4/3)$. The figure below shows the region of admissible $p$ and $t$.
\end{remark}

\begin{figure}[H]
\caption{Admissible Values of $t$ and $p$}
\medskip
\begin{tikzpicture}[scale=4]
\draw[thick,->,>=stealth] (0,0) -- (0,1.5);
\draw[thick,->,>=stealth] (0,0) -- (1.5,0);
\draw[dashed] (0,0.5) -- (1,1.5);				
\draw[dashed] (0,1) -- (1,0);						
\draw[dashed] (0.5,0) -- (0.5,1.5);				
\draw[dashed] (0,1) -- (1.5,1);					
\node[left] at (-0.1,1.5) 		{{$1/t$}};
\node[below] at (1.5,-0.1) 	{{$1/p$}};
\node[left] at (0,1) 				{\footnotesize{$(0,1)$}};
\node[right,fill=white] at (0.275,0.75) 	{\footnotesize{$(\frac{1}{4},\frac{3}{4})$}};
\node[right,fill=white] at (0.55,1) 		{\footnotesize{$(\frac{1}{2},1)$}};
\draw[black,thick,fill=yellow] (0.25,0.75) -- (0.5,1) -- (0,1) --(0.25,0.75);
\node[right] at (1.55,1) 			{{$t=1$}};
\node[below] at (0.5,-0.1)	{{$p=2$}};
\node[right]   at (1,1.5)			{{$\frac{1}{t} = \frac{1}{2}+\frac{1}{p}$}};
\node[right]	at (1,0.2)		{{$\frac{1}{p} + \frac{1}{t} = 1$}};
\end{tikzpicture}
\label{fig:tp}
\end{figure}
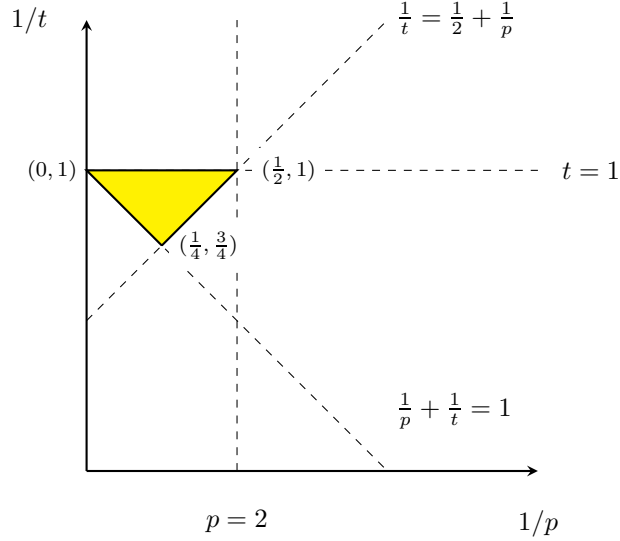

The integral kernel of $S_{k,u}$ is
\[
a(z,w)=\left[  \frac{1}{\pi^{2}}\int\frac{1}{z-\zeta}e_{-k}\left(
\zeta\right)  u(\zeta)\frac{1}{\overline{\zeta}-\overline{w}}~d\zeta\right]
e_{k}(w)\overline{u}(w)~
\]
{ In order to show that $S_{k,u} \in \calE_p$, we need to bound 
$\norm{a}{L^p(L^{p'})}$ and $\norm{a^*}{L^{p'}(L^p)}$.
}
Note that%
\begin{equation}
\left\vert a(z,w)\right\vert \leq\pi^{-2}\int\frac{1}{\left\vert
z-\zeta\right\vert }\frac{1}{\left\vert \zeta-w\right\vert }\left\vert
u(\zeta)\right\vert \left\vert u\left(  w\right)  \right\vert ~dm\left(
\zeta\right)  . \label{a.abs}%
\end{equation}
and%
\begin{equation}
\left\vert a^{\ast}(z,w)\right\vert \leq\pi^{-2}\int\frac{1}{\left\vert
w-\zeta\right\vert }\frac{1}{\left\vert \zeta-z\right\vert }\left\vert
u(\zeta)\right\vert \left\vert u\left(  z\right)  \right\vert ~dm\left(
\zeta\right)  \label{a.star.abs}%
\end{equation}
We will find conditions on $u$ so that $S_{k,u}$ belongs to the
Mikhlin-Itskovich algebra. Before doing so we collect some preliminary
estimates. For a measurable function $g$ on $\mathbb{C}\times\mathbb{C}$,
define%
\begin{align*}
I_{1}(g)(z,w)  &  =\int\frac{g(\zeta,w)}{\left\vert \zeta-z\right\vert
}~d\zeta,\\
I_{2}(g)(z,w)  &  =\int\frac{g(z,\zeta)}{\left\vert \zeta-w\right\vert
}~d\zeta.
\end{align*}
(the \textquotedblleft1\textquotedblright\ and \textquotedblleft%
2\textquotedblright\ refer to integration with respect to the first or second
argument of $g$).

\begin{lemma}
The estimates%
\begin{align}
\left\Vert I_{1}(g)\right\Vert _{L^{\widetilde{p}}\left(  L^{q}\right)  }  
	&	\lesssim_{\, p,q}
		\left\Vert g\right\Vert _{L^{p}\left(  L^{q}\right)}, 
	&& p \in (1,2), \, \, q \in (1,\infty) 
	\label{I1.mix}\\
\left\Vert I_{1}(g)\right\Vert _{L^{\infty}\left(  L^{q}\right)  }  
	&  \lesssim_{\, p,q}
		\left\Vert g\right\Vert _{L^{p}\cap L^{p^{\prime}}\left(  L^{q}\right)},
	&&	p \in (1,2), \, \, q \in (1,\infty),
	\label{I1.infty}\\
\left\Vert I_{2}(g)\right\Vert _{L^{p}\left(  L^{\widetilde{q}}\right)  }  
	&	\lesssim_{\, p,q}
		\left\Vert g\right\Vert _{L^{p}\left(  L^{q}\right)},
	&&	p \in (1,\infty), \, \, q \in (1,2),
	\label{I2.mix}\\
\left\Vert I_{2}(g)\right\Vert _{L^{p}\left(  L^{\infty}\right)  }  
	& \lesssim_{\, p,q}  
		\left\Vert g\right\Vert _{L^{p}\left(  L^{q}\cap L^{q^{\prime}}\right)},
	&&	p, q \in (1,\infty) 
	\label{I2.infty}%
\end{align}
hold.
\end{lemma}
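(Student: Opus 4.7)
The plan is to reduce each of the four mixed-norm bounds to the scalar estimates on the Riesz potential $I_1(f)(z) = \int_\bbC |z-\zeta|^{-1} f(\zeta) \, dm(\zeta)$ recorded in Remark~\ref{rem:I1}: namely the Hardy-Littlewood-Sobolev inequality $\|I_1(f)\|_{L^{\widetilde{p}}} \lesssim_{\, p} \|f\|_{L^p}$ for $p \in (1,2)$ (with $1/\widetilde{p} = 1/p - 1/2$), together with the $L^\infty$ analogue of \eqref{P3}, which reads $\|I_1(f)\|_{L^\infty} \lesssim_{\, p} \|f\|_{L^p \cap L^{p'}}$ for $p \in (1,2)$.

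For the $I_2$ estimates \eqref{I2.mix} and \eqref{I2.infty}, the inner integration in $\zeta$ is already a Riesz potential in the $w$ variable: for each fixed $z$, $I_2(g)(z, \cdot)$ equals the convolution of $g(z, \cdot)$ with $|\cdot|^{-1}$. Applying the scalar estimates pointwise in $z$ gives
\[
\|I_2(g)(z, \cdot)\|_{L^{\widetilde{q}}_w} \lesssim_{\, q} \|g(z, \cdot)\|_{L^q_w}, \qquad \|I_2(g)(z, \cdot)\|_{L^\infty_w} \lesssim_{\, q} \|g(z, \cdot)\|_{L^q \cap L^{q'}_w}.
\]
Raising each inequality to the $p$-th power and integrating in $z$ then yields \eqref{I2.mix} and \eqref{I2.infty} respectively, for any $p \in (1,\infty)$.

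For the $I_1$ estimates \eqref{I1.mix} and \eqref{I1.infty}, the convolution is now in the outer variable $z$, so I would first invoke Minkowski's integral inequality (in the $w$ variable) to move the inner norm past the $\zeta$-integral:
\[
\|I_1(g)(z, \cdot)\|_{L^q_w} \leq \int \frac{\|g(\zeta, \cdot)\|_{L^q_w}}{|\zeta - z|} \, dm(\zeta) = I_1(G)(z),
\]
where $G(\zeta) := \|g(\zeta, \cdot)\|_{L^q_w}$. Since $\|G\|_{L^r} = \|g\|_{L^r(L^q)}$ for every $r \in [1,\infty]$, applying the scalar HLS estimate to $G$ (with $r = p \in (1,2)$) gives \eqref{I1.mix}, while applying the scalar $L^\infty$ bound to $G$ (with $\|G\|_{L^p \cap L^{p'}} = \|g\|_{L^p \cap L^{p'}(L^q)}$) gives \eqref{I1.infty}.

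The main obstacle is essentially none; this is a clean reduction to scalar Riesz-potential bounds. The only subtlety is recognizing that in the $I_1$ case, Minkowski's inequality is needed to convert the problem into a scalar Riesz potential acting on the function $\zeta \mapsto \|g(\zeta, \cdot)\|_{L^q_w}$, after which the one-variable HLS and $L^\infty$ estimates apply directly. The reason the conditions $p \in (1,2)$ in \eqref{I1.mix}--\eqref{I1.infty} and $q \in (1,2)$ in \eqref{I2.mix} appear (while the companion exponent ranges over all of $(1,\infty)$) is simply that they correspond to the variable in which the Riesz potential, rather than Minkowski's inequality, is applied.
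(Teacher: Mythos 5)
Your proposal is correct and follows essentially the same route as the paper's proof: for the $I_2$ estimates you apply the scalar Riesz-potential bounds pointwise in $z$ and then integrate, and for the $I_1$ estimates you use Minkowski's integral inequality to pull the inner $L^q_w$ norm inside, reducing to the scalar Riesz potential acting on $G(\zeta)=\norm{g(\zeta,\cdot)}{L^q_w}$. This is exactly the decomposition the paper uses; the only difference is that you name the auxiliary function $G$ explicitly, which if anything makes the bookkeeping clearer.
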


\begin{proof}
To prove (\ref{I1.mix}), we use Minkowski's integral inequality and 
Remark \ref{rem:I1} to estimate%
\begin{align*}
\left( 
	 \int
	 	\left(  
	 		\int
	 			\left\vert \int\frac{g(\zeta,w)}{|\zeta-z|}~d\zeta\right\vert ^{q}
	 		~dw
	 	\right)^{\widetilde{p}/q}~dz
\right)^{1/\widetilde{p}}  
&  \leq
\left(  
	\int
		\left(  
			\int
				\frac{\left\Vert g\left(  \zeta,~\cdot\right)  \right\Vert_{q}}{\left\vert\zeta-z\right\vert }			
			~d\zeta
		\right)^{\widetilde{p}}dz
\right) ^{1/\widetilde{p}}\\
&  \lesssim_{\, p}
\left(  
	\int
		\left\Vert g(\zeta,~\cdot~)\right\Vert _{q}^{p}
	~d\zeta
\right)^{1/p}\\
&  =C_{p}
	\left\Vert g\right\Vert _{L^{p}\left(  L^{q}\right)  }.
\end{align*}

To prove (\ref{I1.infty}), we use the Hardy-Littlewood Sobolev inequality and Minkowski's integral inequality to estimate%
\begin{align*}
\left(  \int\left\vert \int\frac{g(\zeta,w)}{|\zeta-z|}~d\zeta\right\vert
^{q}~dw\right)^{1/q}   &  \lesssim_{\, q}\int\frac{1}{\left\vert \zeta-z\right\vert
}\left\Vert g(\zeta,~\cdot~)\right\Vert _{q}~d\zeta\\
&  \lesssim_{\, q}\left\Vert g\right\Vert _{L^{p}\cap L^{p^{\prime}}\left(
L^{q}\right)  }.
\end{align*}

To prove (\ref{I2.mix}), we use the Hardy-Littlewood-Sobolev inequality to
estimate%
\begin{align*}
\left\Vert I_{2}(g)\right\Vert _{L^{p}\left(  L^{\widetilde{q}}\right)  }  &
\lesssim_{\, q}\left(  \int\left\Vert g(z,~\cdot~)\right\Vert _{q}^{p}~dz\right)
^{1/p}\\
&  =\left\Vert g\right\Vert _{L^{p}\left(  L^{q}\right)  }.
\end{align*}

To prove (\ref{I2.infty}), we use Remark \ref{rem:I1} to estimate%
\begin{align*}
\left\Vert I_{2}(g)\right\Vert _{L^{p}\left(  L^{\infty}\right)  }  &  \lesssim_{\, p}
\left(  \int\left\Vert g(z,~\cdot~)\right\Vert _{L^{q}\cap L^{q^{\prime}%
}}^{p}~dz\right)  ^{1/p}\\
&  =\left\Vert g\right\Vert _{L^{p}\left(  L^{q}\cap L^{q^{\prime}%
}\right)  }.
\end{align*}

\end{proof}

\begin{lemma}
Suppose that $u\in L^{t}\cap L^{t^{\prime}}$ where $1\leq t<2$. Then, for any
$p>2$ { satisfying \eqref{pt1}--\eqref{pt2}},
we have%
\[
a\in L^{p}\left(  L^{p^{\prime}}\right)  ,~a^{\ast}\in L^{p^{\prime}}\left(
L^{p}\right)  \text{.}%
\]
with%
\begin{equation}
\max\left(  \left\Vert a\right\Vert _{L^{p}\left(  L^{p^{\prime}}\right)
},\left\Vert a^{\ast}\right\Vert _{L^{p^{\prime}}\left(  L^{p}\right)
}\right)  \lesssim_{\, p}\left\Vert u\right\Vert _{L^{t}\cap L^{t^{\prime}}}^{2}
\label{a.pnorm}%
\end{equation}
for a constant $C$ independent of $k$. Moreover,
\begin{equation}
\lim_{\left\vert k\right\vert \rightarrow\infty}\left\Vert a\right\Vert
_{L^{p}\left(  L^{p^{\prime}}\right)  }=\lim_{\left\vert k\right\vert
\rightarrow\infty}\left\Vert a^{\ast}\right\Vert _{L^{p^{\prime}}\left(
L^{p}\right)  }=0. \label{a.klim}%
\end{equation}

\end{lemma}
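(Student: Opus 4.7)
The bound \eqref{a.pnorm} will be obtained by combining the pointwise kernel estimates \eqref{a.abs}--\eqref{a.star.abs} with the mixed-norm inequalities \eqref{I1.mix}--\eqref{I2.mix}. Set $g(\zeta, w) = |u(\zeta)||u(w)|/|\zeta - w|$, so that \eqref{a.abs} reads $|a(z,w)| \leq \pi^{-2} I_1(g)(z,w)$. Applying \eqref{I1.mix} with $\tilde r = p$ and $s = p'$ forces $r = 2p/(p+2) \in (1,2)$ and reduces matters to controlling $\norm{g}{L^r(L^{p'})}$. The inner $L^{p'}_w$-norm of $g(\zeta, \cdot)$ factors as $|u(\zeta)|$ times $\bigl((|u|^{p'} \ast |\cdot|^{-p'})(\zeta)\bigr)^{1/p'}$, and since $|\cdot|^{-p'}$ belongs to weak-$L^{2/p'}$ (using $p' < 2$), the weak-type Hardy--Littlewood--Sobolev convolution inequality together with H\"older's inequality on the outer $\zeta$-integral gives
\[
\norm{g}{L^r(L^{p'})} \lesssim_{\, p} \norm{u}{r_1} \norm{u}{r_1'}
\]
for any $r_1$ in an open interval whose endpoints are dictated by the two HLS validity conditions. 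The exponent arithmetic forces $r_2 = r_1'$ automatically, via $1/r = 1/2 + 1/p$. The estimate on $\norm{a^*}{L^{p'}(L^p)}$ is parallel, using \eqref{I2.mix} with inner exponent $q = 2p/(p+2)$ applied to \eqref{a.star.abs}.

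The admissibility conditions enter precisely here: \eqref{pt1} is equivalent to the pair $t < 2p/(p+2)$ and $t' > 2p/(p-2)$, while \eqref{pt2} is equivalent to $t < p'$ and $t' > p$. Together these force the interval of permissible $r_1$, which turns out to be $(2p/(p+2),\, p)$, to lie inside $[t, t']$, with the corresponding range $(p,\, 2p/(p-2))$ for $r_1'$ also inside $[t, t']$. Hence both $\norm{u}{r_1}$ and $\norm{u}{r_1'}$ are bounded by $\norm{u}{L^t \cap L^{t'}}$, yielding \eqref{a.pnorm}.

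For the decay \eqref{a.klim}, the plan is to exploit bilinearity of $a$ in $(u, \bar u)$ and density of $C_0^\infty(\bbC)$ in $L^t \cap L^{t'}$. From \eqref{a.pnorm} we have, uniformly in $k$,
\[
\norm{a[u] - a[\tilde u]}{L^p(L^{p'})} \lesssim_{\, p} \norm{u - \tilde u}{L^t \cap L^{t'}} \bigl( \norm{u}{L^t \cap L^{t'}} + \norm{\tilde u}{L^t \cap L^{t'}} \bigr),
\]
so we may approximate $u$ by $\tilde u \in C_0^\infty(\bbC)$ and reduce to showing $\norm{a[\tilde u]}{L^p(L^{p'})} \to 0$ for smooth compactly supported $\tilde u$. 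For such $\tilde u$, integrate by parts in $\zeta$ using $\dee_\zeta e_{-k}(\zeta) = -ik\, e_{-k}(\zeta)$: the distributional identity $\dee_\zeta[1/(\bar\zeta - \bar w)] = \pi \delta(\zeta - w)$ then produces three terms, each carrying a prefactor $|k|^{-1}$. The first has $\dee \tilde u$ in place of $u$ in one slot, the second features a Beurling-type kernel $1/(z-\zeta)^2$ (controlled via Lemma \ref{lemma:S.bounded} together with an analogue of the argument above), and the third is the rank-one expression $|\tilde u(w)|^2/(z-w)$, estimated directly by Hardy--Littlewood--Sobolev. Combining these bounds gives $\norm{a[\tilde u]}{L^p(L^{p'})} = \bigO{|k|^{-1}}$, and the analysis for $a^*$ is identical.

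The main obstacle is the exponent bookkeeping: verifying that the Hardy--Littlewood--Sobolev feasibility window overlaps the admissible range $(p,t)$ in such a way that $\norm{u}{r_1}$ and $\norm{u}{r_1'}$ are simultaneously controlled by $\norm{u}{L^t \cap L^{t'}}$. Both \eqref{pt1} and \eqref{pt2} are needed, each controlling one endpoint of the relevant interval. A secondary technical point is that the integration-by-parts in the decay argument generates a Beurling-type singular term that must be handled separately via Lemma \ref{lemma:S.bounded}.
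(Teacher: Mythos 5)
Your proof of \eqref{a.pnorm} takes a genuinely different route from the paper. Brown's proof in the appendix proceeds by duality: he tests $a$ against an arbitrary $g\in L^{p'}(L^p)$, and then chains the mixed-norm estimates \eqref{I1.mix}, H\"older, \eqref{I2.infty}, and the diagonal estimate \eqref{diag} to produce a scalar pairing he can estimate. You instead estimate the kernel directly: bounding $|a|\le \pi^{-2}I_1(g)$ with $g(\zeta,w)=|u(\zeta)||u(w)|/|\zeta-w|$, applying \eqref{I1.mix} to pass to $\|g\|_{L^{2p/(p+2)}(L^{p'})}$, and then using Hardy--Littlewood--Sobolev plus H\"older on the inner norm. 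Your exponent arithmetic is essentially correct: the HLS feasibility window $r_3\in(p',\,2p/(p-2))$ (equivalently $r_1=r_3'\in(2p/(p+2),\,p)$) sits inside $(t,t')$ exactly by \eqref{pt1}--\eqref{pt2}, so both $\|u\|_{r_1}$ and $\|u\|_{r_1'}$ are controlled by $\|u\|_{L^t\cap L^{t'}}$. This is a perfectly reasonable alternative; one attraction of the paper's duality route is that the $a^*$ estimate falls out of the same framework without a separate pointwise-bound manipulation, and the diagonal-trace lemma \eqref{diag} does the work cleanly, but both are comparable in length.

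However, your treatment of \eqref{a.klim} has a genuine gap. You propose integrating by parts in $\zeta$ to manufacture a factor of $|k|^{-1}$, producing a Beurling-type term
\[
\mathrm{p.v.}\int\frac{1}{(z-\zeta)^2}\,\frac{\tilde u(\zeta)\,e_{-k}(\zeta)}{\bar\zeta-\bar w}\,dm(\zeta)\cdot e_k(w)\overline{\tilde u(w)},
\]
which you say is ``controlled via Lemma \ref{lemma:S.bounded} together with an analogue of the argument above.'' But Lemma \ref{lemma:S.bounded} gives scalar $L^p$-boundedness of $\calS$, while here you need boundedness of the Beurling transform acting in the $z$-variable of a function living in the \emph{mixed} space $L^p(L^{p'})$. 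For fixed $w$ the singular integral acts on $z$, so what is required is a vector-valued (Banach-space-valued) Calder\'on--Zygmund estimate for $\calS$ on $L^p_z(L^{p'}_w)$, a result the paper never proves or invokes. (One cannot simply take absolute values: unlike the original kernel, the post-IBP kernel is not absolutely integrable, so the pointwise domination strategy used for \eqref{a.pnorm} breaks down here.) In contrast, the paper's argument avoids this entirely: after reducing to $u\in C_0^\infty(\bbC)$ by density (which you also do), it observes that $a(z,w,k)\to 0$ pointwise a.e.\ by the Riemann--Lebesgue lemma, and that $|a(z,w,k)|$ is dominated by the $k$-independent function $\pi^{-2}I_1(g)(z,w)\in L^p(L^{p'})$ already constructed in the proof of \eqref{a.pnorm}; dominated convergence in the mixed norm then gives the conclusion. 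You should replace the IBP argument with this dominated-convergence argument, which uses only facts already established.
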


\begin{proof}
To estimate $\left\Vert a\right\Vert _{L^{p}\left(  L^{p^{\prime}}\right)  }$,
we use (\ref{holder}). Let $g\in L^{p^{\prime}}\left(  L^{p}\right)  $ with
$\left\Vert g\right\Vert _{L^{p^{\prime}}\left(  L^{p}\right)  }\leq1$. A
short computation using (\ref{a.abs}) shows that, up to absolute numerical
constants,%
\begin{align}
\left\vert \int a(z,w)g(z,w)~dz~dw\right\vert  &  \leq\int\left\vert
u(\zeta)\right\vert \int\frac{\left\vert u(w)\right\vert }{\left\vert
\zeta-w\right\vert }\int\frac{\left\vert g(z,w)\right\vert }{\left\vert
\zeta-z\right\vert }~dz~dw~d\zeta\label{a.pre}\\
&  \leq\int\left\vert u(\zeta)\right\vert ~\left[  I_{2}\left(  u~\cdot
~I_{1}(g)\right)  \right]  \left(  \zeta,\zeta\right)  ~d\zeta\nonumber
\end{align}
where
\[
\left(  u\cdot I_{1}(g)\right)  (z,w)=u(w)\cdot I_{1}(g)(z,w).
\]
By (\ref{I1.mix}) we have $I_{1}(g)\in L^{s}\left(  L^{p}\right)  $ with
$\left\Vert I_{1}(g)\right\Vert _{L^{s}\left(  L^{p}\right)  }\leq C\left\Vert
g\right\Vert _{L^{p^{\prime}}\left(  L^{p}\right)  }$ where
\[
\frac{1}{s}=\frac{1}{2}-\frac{1}{p}.
\]
By H\"{o}lder's inequality, using the fact that $u\in L^{\infty}\left(
L^{t^{\prime}}\cap L^{t}\right)  $ (viewed as a function of two variables
depending only on $w$) we then have $\left\Vert u(~\cdot~)I_{1}(g)\right\Vert
_{L^{s}\left(  L^{r}\right)  }\leq C\left\Vert u\right\Vert _{L^{t^{\prime}%
}\cap L^{t}}\left\Vert g\right\Vert _{L^{p^{\prime}}\left(  L^{p}\right)  }$
provided $\dfrac{1}{r}$ belongs to the interval $J_{1}=\left(  \dfrac{1}%
{p}+\dfrac{1}{t^{\prime}},\dfrac{1}{p}+\dfrac{1}{t}\right)  $. We claim that
there is an $r\in\left(  1,2\right)  $ with $\left(  \dfrac{1}{r^{\prime}%
},\dfrac{1}{r}\right)  \subset  J_{1}$. Such an $r$ exists provided%
\[
\dfrac{1}{p}+\dfrac{1}{t^{\prime}}<\dfrac{1}{2},~\dfrac{1}{p}+\dfrac{1}{t}>\dfrac
{1}{2}.
\]
The second inequality is trivial since $t<2$ and the first is equivalent to
(\ref{pt1}). Choosing such an $r$ we now have $u\left(  ~\cdot\right)
I_{1}(g)\in L^{s}\left(  L^{r}\cap L^{r^{\prime}}\right)  $. Now we use
(\ref{I2.infty}) to conclude that $I_{2}\left(  u\cdot I_{1}(g)\right)  \in
L^{s}\left(  L^{\infty}\right)  $ so that, by (\ref{diag}), $\left[
I_{2}\left(  u~\cdot~I_{1}(g)\right)  \right]  \left(  \zeta,\zeta\right)  \in
L^{s}$ with
\[
\left\Vert I_{2}(u\cdot I_{1}(g)(~\cdot~,~\cdot~)\right\Vert _{L^{s}}\leq
C\left\Vert u\right\Vert _{L^{t^{\prime}}\cap L^{t}}\left\Vert g\right\Vert
_{L^{p^{\prime}}\left(  L^{p}\right)  }.
\]
Hence, we can bound the right-hand side of (\ref{a.pre}) by $\left\Vert
u\right\Vert _{s^{\prime}}\left\Vert I_{2}\left(  u\cdot I_{1}(g)\right)
\right\Vert _{L^{s}\left(  L^{\infty}\right)  }$ provided $\dfrac{1}{s^{\prime
}}\in\left(  \dfrac{1}{t^{\prime}},\dfrac{1}{t}\right)  $. As $\dfrac
{1}{s^{\prime}}=\dfrac{1}{2}+\dfrac{1}{p}$ we need the two inequalities%
\[
\dfrac{1}{2}+\frac{1}{p}<\frac{1}{t},~\frac{1}{2}+\frac{1}{p}>1-\frac{1}{t}%
\]
to hold. The first is (\ref{pt1}) and the second is equivalent to $\dfrac
{1}{t}>\dfrac{1}{2}-\dfrac{1}{p}$ which is trivial since $t<2$. Hence%
\[
\left\Vert a\right\Vert _{L^{p}\left(  L^{p^{\prime}}\right)  }\leq
C\left\Vert u\right\Vert _{L^{t^{\prime}}\cap L^{t}}^{2}%
\]

Next, to estimate $\left\Vert a^{\ast}\right\Vert _{L^{p^{\prime}}\left(
L^{p}\right)  }$, we choose $g\in L^{p}\left(  L^{p^{\prime}}\right)  $ with
$\left\Vert g\right\Vert _{L^{p}\left(  L^{p^{\prime}}\right)  }\leq1$. We
then use (\ref{holder}) and (\ref{a.star.abs}) to bound (again up to
numerical constants)%
\begin{align}
\left\vert \int a^{\ast}g~dz~dw\right\vert  &  \leq\int\left\vert
u(z)\right\vert \int\frac{\left\vert u(\zeta)\right\vert }{\left\vert
\zeta-z\right\vert }\int\frac{\left\vert g(z,w)\right\vert }{\left\vert
\zeta-w\right\vert }~dw~d\zeta~dz\label{astar.pre}\\
&  =\int\left\vert u(z)\right\vert ~I_{2}\left(  u\cdot I_{2}(g)\right)
\left(  z,z\right)  ~dz\nonumber
\end{align}
where%
\[
\left(  u\cdot I_{2}\left(  g\right)  \right)  \left(  z,w\right)
=u(w)I_{2}(g)\left(  z,w\right)  .
\]
First, by (\ref{I2.mix}), we have $I_{2}(g)\in L^{p}\left(  L^{s}\right)  $
with $\left\Vert I_{2}(g)\right\Vert _{L^{p}\left(  L^{s}\right)  }\leq
C\left\Vert g\right\Vert _{L^{p}\left(  L^{p^{\prime}}\right)  }$, where
$\dfrac{1}{s}=\dfrac{1}{2}-\dfrac{1}{p}$. Since $u$ (viewed as a function of two
variables depending only on the second variable)\ belongs to $L^{\infty
}\left(  L^{t}\cap L^{t^{\prime}}\right)  $, it follows that $u\cdot I_{2}(g)$
belongs to $L^{p}\left(  L^{r}\right)  $ for any $r$ with $\dfrac{1}{r}$
belonging to the interval $J_2=\left(  \dfrac{1}{s}+\dfrac{1}{t^{\prime}%
},\dfrac{1}{s}+\dfrac{1}{t}\right)  $ and $\left\Vert u\cdot I_{2}(g)\right\Vert
_{L^{p}\left(  L^{r}\right)  }\lesssim_{\, p}\left\Vert u\right\Vert _{L^{t}\cap
L^{t^{\prime}}}\,\left\Vert g\right\Vert _{L^{p^{\prime}}\left(  L^{p}\right)
}$. We claim that there is an $r\in(1,2)$ with $\left(  \dfrac{1}{r^{\prime}%
},\dfrac{1}{r}\right)  \in J_2$. This is the case provided the two
inequalities%
\[
\frac{1}{s}+\frac{1}{t^{\prime}}<\frac{1}{2},~~\frac{1}{s}+\frac{1}{t}%
>\frac{1}{2}%
\]
hold. The first is equivalent to (\ref{pt2}). The second inequality is
trivial since $t<2$. We can now use (\ref{I2.infty}) to estimate%
\begin{align*}
\left\Vert I_{2}\left(  u~\cdot~I_{2}(g)\right)  \right\Vert _{L^{p}\left(
L^{\infty}\right)  }  &  \leq C\left\Vert u~\cdot~I_{2}(g)\right\Vert
_{L^{p}\left(  L^{r}\cap L^{r^{\prime}}\right)  }\\
&  \leq C\left\Vert u\right\Vert _{L^{t}\cap L^{t^{\prime}}}\left\Vert
g\right\Vert _{L^{p}\left(  L^{p^{\prime}}\right)  }.
\end{align*}
Finally, using (\ref{diag}) and H\"{o}lder's inequality, we can bound the
right-hand side of (\ref{astar.pre}) by%
\[
C\left\Vert u\right\Vert _{L^{p^{\prime}}}\left\Vert u\right\Vert _{L^{t}\cap
L^{t^{\prime}}}\left\Vert g\right\Vert _{L^{p}\left(  L^{p^{\prime}}\right)  }%
\]
which is in turn bounded by $C\left\Vert u\right\Vert _{L^{t}\cap
L^{t^{\prime}}}^{2}$ provided $\dfrac{1}{p^{\prime}}\in\left(  \dfrac
{1}{t^{\prime}},\dfrac{1}{t}\right)  $. This is true provided $\dfrac
{1}{p^{\prime}}>\dfrac{1}{t^{\prime}}$ and $\dfrac{1}{p^{\prime}}<\dfrac{1}{t}$.
The first of these inequalities is trivial since $p>t$ and the second is
equivalent to (\ref{pt2}).

To prove (\ref{a.klim}), it suffices to show that the limits are zero in
case $u\in\mathcal{C}_{0}^{\infty}(\mathbb{C})$. Emphasizing the dependence of
$a$ on $k$, write%
\[
a\left(  z,w,k\right)  =\left[  \dfrac{1}{\pi^{2}}\int\frac{1}{z-\zeta}%
e_{-k}\left(  \zeta\right)  u(\zeta)\frac{1}{\overline{\zeta}-\overline{w}%
}~d\zeta\right]  e_{k}(w)\overline{u}(w).
\]
For each fixed $z,w$ it follows from the Riemann-Lebesgue lemma that%
\[
\lim_{\left\vert k\right\vert \rightarrow\infty}a(z,w,k)=0
\]
for almost every $\left(  z,w\right)  $. Since $\left\vert a(z,w,k)\right\vert
$ is dominated by a fixed $L^{p^{\prime}}\left(  L^{p}\right)  $ function, it
follows that $\left\Vert a(~\cdot~,~\cdot~,k)\right\Vert _{L^{p}\left(
L^{p^{\prime}}\right)  }\rightarrow0$ as $\left\vert k\right\vert
\rightarrow\infty$. A similar argument shows that $\left\Vert a^{\ast}\left(
~\cdot~,~\cdot~,k\right)  \right\Vert _{L^{p^{\prime}}\left(  L^{p}\right)
}\rightarrow0$ as $\left\vert k\right\vert \rightarrow\infty$.
\end{proof}


\begin{proof}[Proof of Proposition \ref{prop:brown}]
{The continuity follows from the fact that the maps%
\begin{align*}
(k,u)  &  \rightarrow a(z,w,k)\\
(k,u)  &  \rightarrow a^{\ast}(z,w,k)
\end{align*}
respectively from 
$(L^{t}(\bbC) \cap L^{t'}(\bbC)) \times \mathbb{C}$ to $L^{p}\left(  L^{p^{\prime}}\right)  $ 
and
to $L^{p^{\prime}}\left(  L^{p}\right)  $ are continuous. The fact that
$D(k)\rightarrow1$ follows from the fact that $\left\Vert S_{k,u}\right\Vert
_{\mathcal{E}_{p}}\rightarrow0$ as $\left\vert k\right\vert \rightarrow\infty
$, as follows from (\ref{a.klim}). The estimate \eqref{D.unif} follows from the 
bilinearity of $u \mapsto S_{k,u}$ and the fact that estimates on $\norm{a(\dotarg,\dotarg,k)}{L^p(L^{p'})}$ and $\norm{a^*(\dotarg,\dotarg,k)}{L^{p'}(L^p)}$ are independent of $k$.
}
\end{proof}

\end{document}